\newtheoremstyle{mio}%
{}{} % spazio sopra e sotto%
{\itshape}{} % corpo del testo, indentation
{\bfseries}{.}{ } % titolo del teorema: tipo di testo, divisore, spaziatura
{#1 #2\thmnote{~\mdseries(#3)}} % formattazione nota
\theoremstyle{mio}
\newtheorem{teor}{Theorem}[section]
\newtheorem{cor}[teor]{Corollary}
\newtheorem{prop}[teor]{Proposition}
\newtheorem{lemma}[teor]{Lemma}
\newtheorem{defin}[teor]{Definition}
\newtheoremstyle{definition2}%
{}{} % spazio sopra e sotto%
{}{} % corpo del testo, indentation
{\bfseries}{.}{ } % titolo del teorema: tipo di testo, divisore, spaziatura
{#1 #2\thmnote{\mdseries~ #3}} % formattazione nota
\theoremstyle{definition2}
\newtheorem{ex}[teor]{Example}
\newtheorem{oss}[teor]{Remark}
\newcommand{\sigcap}{\Sigma}
\newcommand{\sgmr}{\Sigma_r}
\newcommand{\sgmi}{\Sigma_i}
\newcommand{\mmax}{\mathcal{M}}
\newcommand{\Jac}{\mathrm{Jac}}
\newcommand{\Inv}{\mathrm{Inv}}
\newcommand{\Div}{\mathrm{Div}}
\newcommand{\insid}{\mathcal{I}}
\newcommand{\deriv}{\mathcal{D}}
\DeclareMathOperator{\supp}{supp}
\DeclareMathOperator{\rk}{rk}
\newcommand{\inscrit}{\mathrm{Crit}}
\newcommand{\cons}{\mathrm{cons}}
\newcommand{\insfunct}{\mathcal{F}}
\newcommand{\insbound}{\insfunct_b}
\newcommand{\inscont}{\mathcal{C}}
\newcommand{\funcontcomp}{\mathcal{C}_c}
\newcommand{\inssubmod}{\mathbf{F}}
\newcommand{\Mod}{\mathrm{Mod}}
\title{Almost Dedekind domains without radical factorization}
\author{Dario Spirito}
\date{\today}
\address{Dipartimento di Scienze Matematiche, Informatiche e Fisiche, Universit\`a degli Studi di Udine, Udine, Italy}
\email{dario.spirito@uniud.it}
\subjclass[2010]{13F05; 13A15; 06F15}
\keywords{Almost Dedekind domains; critical maximal ideals; free abelian group; invertible ideals; divisorial ideals; extremally disconnected spaces; $\ell$-groups; length functions}
\begin{document}

\begin{abstract}
We study almost Dedekind domains with respect to the failure of ideals to have radical factorization, that is, we study how to measure how far an almost Dedekind domain is from being an SP-domain. To do so, we consider the maximal space $\mmax=\Max(R)$ of an almost Dedekind domain $R$, interpreting its (fractional) ideals as maps from $\mmax$ to $\insZ$, and looking at the continuity of these maps when $\mmax$ is endowed with the inverse topology and $\insZ$ with the discrete topology. We generalize the concept of critical ideals by introducing a well-ordered chain of closed subsets of $\mmax$ (of which the set of critical ideals is the first step) and use it to define the class of \emph{SP-scattered domains}, which includes the almost Dedekind domains such that $\mmax$ is scattered and, in particular, the almost Dedekind domains such that $\mmax$ is countable. We show that for this class of rings the group $\Inv(R)$ is free by expressing it as a direct sum of groups of continuous maps, and that, for every length function $\ell$ on $R$ and every ideal $I$ of $R$, the length of $R/I$ is equal to the length of $R/\rad(I)$.
\end{abstract}

\maketitle

\section{Introduction}

An \emph{almost Dedekind domain} is an integral domain $R$ such that, for every maximal ideal $M$ of $R$, $R_M$ is a discrete valuation ring. Almost Dedekind domains are a non-Noetherian generalization of Dedekind domains; indeed, an almost Dedekind domain is Dedekind if and only if it is Noetherian. Almost Dedekind domains appear naturally in several places of commutative algebra; for example, if the ring $\mathrm{Int}(D)$ of integer-valued polynomials on a domain $D$ in Pr\"ufer, then $D$ must be an almost Dedekind domain \cite[Proposition VI.1.5]{intD}.

Dedekind domains can also be characterized as those integral domains such that every ideal can be factorized as a product of prime ideals; for this reason, almost Dedekind domains are an interesting class in which to study weaker forms of factorizations (see for example \cite[Chapter 3]{fontana_factoring}). One of these forms is \emph{radical factorization}, that is, the possibility for an ideal to be written as a product of radical ideals. Domains where every ideal has radical factorizations are called \emph{SP-domains}, and they must be almost Dedekind \cite{vaughan-SP}, although not every almost Dedekind domain has radical factorizations. There are many equivalent characterizations of SP-domains; see \cite[Theorem 2.1]{olberding-factoring-SP} or \cite[Theorem 3.1.2]{fontana_factoring}, partly summarized in Theorem \ref{teor:caratt-SP} below.

In \cite{HK-Olb-Re}, the authors considered SP-domains with nonzero Jacobson radical from a topological point of view, and showed that, for such a ring $R$, the group $\Inv(R)$ of invertible ideals of $R$ is isomorphic (as an $\ell$-group) to the group $\inscont(\Max(R),\insZ)$ of the continuous function from the maximal space $\Max(R)$, endowed with the inverse topology, to $\insZ$, endowed with the discrete topology; in particular, this implies that $\Inv(R)$ is a free group. Subsequently, they showed that also the group $\Div(R)$ of the divisorial ideals of $R$ is a free group by interpreting it as the completion of $\Inv(R)$ as an $\ell$-group.

The purpose of this paper is to study how much of the theory in \cite{HK-Olb-Re} can be generalized to general almost Dedekind domains, and how its results must be modified in this more general context. The starting point is to associate to each fractional ideal $I$ of the almost Dedekind domain $R$ a natural map $\nu_I:\mmax\longrightarrow\insZ$, where $\mmax=\Max(R)$ is the maximal space of $R$ endowed with the inverse topology. We show that $\nu_I$ is continuous if and only if $I$ can be written as a product $J_1\cdots J_n$ of radical ideals such that every $J_i$ is the radical of a finitely generated ideal (Proposition \ref{prop:nuI-continuous}); through this result, we are first able to generalize \cite[Theorem 5.1]{HK-Olb-Re} to SP-domains with zero Jacobson radical by only considering continuous function with compact support (Corollary \ref{cor:SP-inv}), and then we define three groups measuring how much an almost Dedekind domain is far from being an SP-domain.

In Section \ref{sect:InvR}, we study a sufficient condition under which the group $\Inv(R)$ of invertible ideals is free. Generalizing the notion of \emph{critical ideals} (which appear in one of the characterizations of SP-domains), we define a descending chain of closed subsets of $\mmax$ that is analogous to the derived chain of a topological space, and we call an almost Dedekind domain \emph{SP-scattered} if this chain terminates at the empty set. For SP-scattered domains, we find an isomorphism between $\Inv(R)$ and a direct sum of groups of continuous functions (Theorem \ref{teor:SP-scattered}); in particular, this proves that for these rings $\Inv(R)$ is a free group. In the following Section \ref{sect:div}, we consider the group $\Div(R)$ of divisorial ideal of $R$; while we are not able to generalize the results of the previous section to this group, we show that for every SP-domain both $\Div(R)$ and the quotient $\Div(R)/\Inv(R)$ are free groups (Theorem \ref{teor:Div}). In the final Section \ref{sect:length} we analyze length functions over almost Dedekind domains: in particular, we show that, if $R$ is an SP-scattered domain and $I$ is an ideal, the length of $R/I$ is always equal to the length of $R/\rad(I)$.

\section{Preliminaries}

\subsection{Invertible ideals}

Let $R$ be an integral domain with quotient field $K$. A \emph{fractional ideal} of $R$ is an $R$-submodule $I$ of $K$ such that $dI\subseteq R$ for some $d\in R\setminus\{0\}$; for clarity, we call a fractional ideal contained in $R$ (i.e., an ideal of $R$) an \emph{integral ideal}. We denote by $\insfracid(R)$ the set of fractional ideals of $R$ and by $\insid(R)$ the set of integral ideals.

A fractional ideal is \emph{invertible} if there is a fractional ideal $J$ such that $IJ=R$; in this case, $J$ is unique and equal to  $(R:I):=\{x\in K\mid xI\subseteq R\}$. When $I$ is invertible, we set $I^{-1}:=(R:I)$. The set $\Inv(R)$ of invertible fractional ideals of $R$ is a group under the product of ideals.

If $T$ is an overring of $R$ (i.e., a ring between $R$ and $K$) then there is a natural map $\Psi:\Inv(R)\longrightarrow\Inv(T)$ given by the extension $I\mapsto IT$. If $R$ is a Pr\"ufer domain, then $\Psi$ is surjective (since all finitely generated ideals are invertible), and $\ker\Psi=\{I\in\Inv(R)\mid IT=T\}$.

\subsection{Almost Dedekind domains}

Let $R$ be an integral domain. We say that $R$ is an \emph{almost Dedekind domain} if $R_M$ is a discrete valuation ring for every maximal ideal $M$ of $R$. An almost Dedekind domain that is also Noetherian is necessarily a Dedekind domain, but there are examples of non-Noetherian almost Dedekind domains, the first of which was given by Nakano \cite{nakano-almded}. An almost Dedekind domain $R$ is necessarily one-dimensional and Pr\"ufer (see \cite{gilmer} for properties of Pr\"ufer domains). In particular, all its finitely generated (fractional) ideals are invertible.

An \emph{SP-domain} is an integral domain such that every proper ideals is a product of radical ideals; an SP-domain is always almost Dedekind. Among almost Dedekind domain, SP-domains enjoy several characterizations; see \cite[Theorem 2.1]{olberding-factoring-SP} and \cite[Theorem 3.1.2]{fontana_factoring} for the proof of the following equivalences.
\begin{teor}\label{teor:caratt-SP}
Let $R$ be an almost Dedekind domain. Then, the following are equivalent:
\begin{enumerate}[(i)]
\item\label{teor:caratt-SP:finprod} $R$ is an SP-domain (i.e., every proper ideal of $R$ is a finite product of radical ideals);
\item\label{teor:caratt-SPfinprodprinc} every proper principal ideal of $R$ is a (finite) product of radical ideals;
\item\label{teor:caratt-SP:prodcresc} every proper principal ideal $I$ of $R$ can be represented uniquely as $I=J_1\cdots J_k$, where $J_1\subseteq\cdots\subseteq J_k$ are radical ideals;
\item\label{teor:caratt-SP:radical} the radical of every finitely generated ideal of $R$ is finitely generated;
\item\label{teor:caratt-SP:critical} $R$ has no critical ideals (see Section \ref{sect:critical} for the definition).
\end{enumerate}
\end{teor}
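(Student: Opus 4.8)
The tool I would build first is a dictionary between fractional ideals and integer-valued functions on $\mmax$. Since $R$ is Pr\"ufer and one-dimensional, every nonzero fractional ideal $I$ satisfies $I=\bigcap_{M\in\mmax}IR_M$ with $IR_M=M^{v_M(I)}R_M$ for a unique integer $v_M(I)$, where $v_M$ is the valuation of the discrete valuation ring $R_M$; thus $I$ is determined by the function $v_\bullet(I)\colon\mmax\to\insZ$, $M\mapsto v_M(I)$. Under this correspondence products of ideals go to sums of functions and containment $I\subseteq J$ to the reverse pointwise inequality $v_\bullet(I)\geq v_\bullet(J)$. A proper ideal $J$ is radical precisely when $v_M(J)\in\{0,1\}$ for every $M$, i.e. when $v_\bullet(J)=\chi_{V(J)}$ is the indicator of the closed set $V(J)=\{M\mid M\supseteq J\}$; and a radical ideal is finitely generated (equivalently invertible, as $R$ is Pr\"ufer) exactly when $V(J)$ is compact and open. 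I would record these facts first, since the whole theorem then becomes a statement about decomposing such functions as finite sums of indicators of clopen sets.

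With this dictionary, $(\ref{teor:caratt-SP:finprod})\Rightarrow(\ref{teor:caratt-SPfinprodprinc})$ is immediate, and the equivalence $(\ref{teor:caratt-SPfinprodprinc})\Leftrightarrow(\ref{teor:caratt-SP:prodcresc})$ I would prove by the \emph{layer-cake} construction: given a proper ideal $I$ with $f:=v_\bullet(I)$ bounded by $n$, set $J_j:=\bigcap\{M\mid f(M)\geq j\}$ for $1\leq j\leq n$. Each $J_j$ is radical, the inclusions $V(J_1)\supseteq\cdots\supseteq V(J_n)$ give $J_1\subseteq\cdots\subseteq J_n$, and $\sum_j\chi_{V(J_j)}=f$ yields $I=J_1\cdots J_n$. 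Conversely, any expression of $I$ as a product of radical ideals forces, level by level, the sets $V(J_j)$ to be these level sets, which gives both the ascending normal form and its uniqueness. The point that needs checking is that each $J_j$ is genuinely finitely generated so that the product is of the required type; this is exactly where $(\ref{teor:caratt-SPfinprodprinc})$ feeds back, through the hypothesis that the relevant level sets are clopen.

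The substantive implications are $(\ref{teor:caratt-SPfinprodprinc})\Rightarrow(\ref{teor:caratt-SP:radical})$ and $(\ref{teor:caratt-SP:radical})\Rightarrow(\ref{teor:caratt-SP:finprod})$. For the first, I would take a finitely generated (hence invertible) ideal $I$ and apply $(\ref{teor:caratt-SPfinprodprinc})$ to a suitable principal ideal, reading off from the layer-cake normal form that $V(I)=V(\rad(I))$ is clopen, whence $\rad(I)$ is finitely generated. For $(\ref{teor:caratt-SP:radical})\Rightarrow(\ref{teor:caratt-SP:finprod})$ --- which carries the full strength of the SP property for arbitrary, not necessarily finitely generated, proper ideals --- I would argue that $(\ref{teor:caratt-SP:radical})$ forces $v_\bullet(I)$ to be bounded and each level set $\{f\geq j\}$ to be compact and open, so that the layer-cake decomposition terminates after finitely many steps and produces finitely generated radical factors. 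Here compactness of $\mmax$ in the inverse topology is the essential input that promotes the local finiteness supplied by $(\ref{teor:caratt-SP:radical})$ to the global finiteness of the factorization.

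Finally, the equivalence $(\ref{teor:caratt-SP:radical})\Leftrightarrow(\ref{teor:caratt-SP:critical})$ I expect to be a direct unwinding of the definition of critical ideal from Section \ref{sect:critical}: a maximal ideal should be critical exactly when it obstructs the finite generation of the radical of some finitely generated ideal through it, so that the absence of critical ideals is a reformulation of $(\ref{teor:caratt-SP:radical})$. I anticipate the main obstacle to lie in $(\ref{teor:caratt-SP:radical})\Rightarrow(\ref{teor:caratt-SP:finprod})$, specifically in controlling arbitrary ideals: one must rule out unbounded valuation functions and show that the level sets are clopen uniformly. It is precisely this passage from the finitely generated case to arbitrary ideals, mediated by compactness, that makes the step delicate rather than formal.
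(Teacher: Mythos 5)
Your dictionary contains a false entry, and your argument leans on it at the decisive moment. The claim that ``a radical ideal is finitely generated (equivalently invertible) exactly when $V(J)$ is compact and open'' is true in only one direction: finitely generated implies $V(J)$ compact and clopen. The converse is not a formal property of the dictionary --- it is essentially the theorem you are trying to prove. A compact clopen $V(J)$ only gives that $J$ is the radical of an \emph{invertible} ideal (Proposition \ref{prop:nuI-radical}\ref{prop:nuI-radical:equiv}), and whether such a $J$ is itself finitely generated is exactly the question of criticality (Proposition \ref{prop:crit-VI}): if $M$ is a critical maximal ideal and $0\neq a\in M$, then $J:=\rad(aR)$ has $V(J)=V(aR)$ compact and clopen, yet $J$ cannot be finitely generated, because a critical ideal contains no finitely generated radical ideal. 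So your dictionary fact fails in every almost Dedekind domain that is not SP, and your step (\ref{teor:caratt-SPfinprodprinc})$\Rightarrow$(\ref{teor:caratt-SP:radical}), which concludes ``$V(I)=V(\rad(I))$ is clopen, whence $\rad(I)$ is finitely generated'', is circular. (The step is repairable without that fact: by (\ref{teor:caratt-SP:prodcresc}) write $aR=J_1\cdots J_k$ with $J_1\subseteq\cdots\subseteq J_k$ radical; then $\rad(aR)=J_1\cap\cdots\cap J_k=J_1$, which is invertible because factors of the invertible ideal $aR$ are invertible; and for $I=(a_1,\ldots,a_n)$ the ideal $\rad(a_1R)+\cdots+\rad(a_nR)$ is finitely generated, radical, and equal to $\rad(I)$, as one checks on valuations. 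Relatedly, your worry in (\ref{teor:caratt-SPfinprodprinc})$\Leftrightarrow$(\ref{teor:caratt-SP:prodcresc}) that the layer-cake factors must be finitely generated is misplaced: neither statement requires finitely generated factors.)

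The larger gap is (\ref{teor:caratt-SP:radical})$\Rightarrow$(\ref{teor:caratt-SP:finprod}), which you rightly single out as carrying the full strength of the theorem but then only assert: you give no argument that the absence of critical ideals forces $v_\bullet(I)$ to be bounded for an \emph{arbitrary} proper ideal $I$, nor that its level sets are closed, and both auxiliary claims you do make are incorrect as stated. First, the level sets need not be ``compact and open'' even in an SP-domain: if $M$ is a non-isolated maximal ideal of an SP-domain with nonzero Jacobson radical, then $I=M$ is radical and its level set $V(M)=\{M\}$ is compact but not open (Proposition \ref{prop:isolated-mmax}); what the layer-cake argument actually needs is that the level sets be Zariski-closed, so that $J_j:=\bigcap\{M\mid f(M)\geq j\}$ satisfies $V(J_j)=\{f\geq j\}$ --- demanding openness would wrongly force every factor to be invertible. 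Second, the ``essential input'' cannot be compactness of $\mmax$, since $\mmax$ is compact if and only if $\Jac(R)\neq 0$; the compactness available in general is that of $V(I)$ (Proposition \ref{prop:suppnuI}). Note finally that the paper does not prove this theorem at all --- it cites \cite{olberding-factoring-SP} and \cite{fontana_factoring} --- and its own machinery (Propositions \ref{prop:nuI-radical}, \ref{prop:crit-VI} and \ref{prop:nuI-continuous-invt}) recovers only the finitely generated/invertible part of the equivalences, essentially (\ref{teor:caratt-SPfinprodprinc})$\Leftrightarrow$(\ref{teor:caratt-SP:prodcresc})$\Leftrightarrow$(\ref{teor:caratt-SP:radical})$\Leftrightarrow$(\ref{teor:caratt-SP:critical}); the passage from finitely generated to arbitrary ideals, which is exactly what your boundedness-plus-closedness assertion amounts to, is the part left to the references and needs a genuine argument, not an appeal to compactness.
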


We note that some of the these equivalences also holds without assuming beforehand that $R$ is an almost Dedekind domain.

\subsection{The inverse topology}
Let $R$ be an integral domain. The \emph{Zariski topology} on the spectrum $\Spec(R)$ is the topology whose open sets are those in the form $D(I):=\{P\in\Spec(R)\mid I\nsubseteq P\}$, where $I$ is an ideal; consequently, the closed sets are the $V(I):=\Spec(R)\setminus D(I)$. If $I$ is finitely generated, $D(I)$ is compact; conversely, if $D(I)$ is compact, then $D(I)=D(J)$ for some finitely generated ideal $J$.

The \emph{inverse topology} on $\Spec(R)$ is the coarsest topology such that every open and compact subset of the Zariski topology is closed. The \emph{constructible topology} is the coarsest topology such that every open and compact subset of the Zariski topology is both open and closed; the constructible topology is finer than the Zariski topology, and we denote by $\Spec(R)^\cons$ the spectrum of $R$ endowed with the constructible topology. With both the inverse and the constructible topologies, $\Spec(R)$ is a compact space; furthermore, $\Spec(R)^\cons$ is Hausdorff. See \cite[Chapter 1]{spectralspaces-libro} for properties of the inverse and constructible topology (in the topological context of spectral spaces).

Let $R$ be a ring. On the maximal spectrum $\Max(R)$ of $R$, the inverse and the constructible topology coincide \cite[Corollary 4.4.9]{spectralspaces-libro}; we denote this topological space with $\mmax$, or with $\mmax_R$ if we wan to stress the domain considered. Then, $\mmax$ is a completely regular space (in particular, Hausdorff) that is totally disconnected and has a basis of clopen subsets \cite[Corollary 4.4.9]{spectralspaces-libro}. When $R$ is one-dimensional (in particular, if $R$ is an almost Dedekind domain), $\mmax$ is compact if and only if the Jacobson radical of $R$ is nonzero.

The inverse topology is intimately connected with the study of intersection of localizations of a ring. We will use several times the following well-known lemma.
\begin{lemma}\label{lemma:closed-intersect}
Let $R$ be a one-dimensional Pr\"ufer domain and let $X\subseteq\mmax$ a nonempty closed subset of $R$. Then, the maximal ideals of $T:=\bigcap\{R_P\mid P\in X\}$ are exactly the extensions of the ideals in $X$.
\end{lemma}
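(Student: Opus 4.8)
The plan is to prove that the maximal ideals of $T=\bigcap\{R_P\mid P\in X\}$ are exactly the extensions $PT$ for $P\in X$. Since $R$ is one-dimensional Pr\"ufer and $X\subseteq\mmax$ is closed (hence compact in the inverse topology), the structure is rigid: each $R_P$ is a valuation ring, and $T$ is an intersection of localizations, so $T$ is again a Pr\"ufer domain with the same quotient field $K$. First I would recall the general correspondence for overrings of Pr\"ufer domains: for any overring $T$ of a Pr\"ufer domain $R$, the localizations of $T$ are among the localizations of $R$, and in fact the maximal ideals $N$ of $T$ satisfy $T_N=R_{N\cap R}$, so $T$ is determined by the set of primes $\{N\cap R\mid N\in\Max(T)\}$.

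\textbf{The two inclusions.} The easier direction is that each $PT$ (for $P\in X$) is contained in a maximal ideal whose contraction is $P$: since $T\subseteq R_P$ by definition of the intersection, and $R_P$ is a valuation ring dominating $T$ at the prime $PR_P\cap T$, one checks $PR_P\cap T$ contracts to $P$ in $R$ and is maximal in $T$ (using one-dimensionality, so that a nonzero contracted prime must be maximal). For the reverse inclusion, let $N$ be any maximal ideal of $T$ and set $Q=N\cap R$. I would argue that $Q$ must lie in $X$. The key point is that $T=\bigcap_{P\in X}R_P$ forces every localization $T_N=R_Q$ to ``come from'' the intersection, i.e. $Q$ cannot escape the closed set $X$.

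\textbf{The main obstacle} is precisely this reverse inclusion: showing $Q=N\cap R\in X$ rather than merely $Q\in\mmax$. This is where the inverse topology is essential. The natural approach is by contradiction: if $Q\notin X$, then since $X$ is closed in the inverse topology (equivalently, closed in the constructible topology on $\mmax$), there is a basic clopen neighborhood separating $Q$ from $X$; concretely, by the description of the inverse topology there is a finitely generated ideal $I$ with $X\subseteq V(I)$-type region but $Q\in D(I)$, i.e. $I\subseteq P$ for all $P\in X$ yet $I\nsubseteq Q$. Then $I$ extends to a unit ideal in each $R_P$ fails—rather, $IT$ should be all of $T$ because $I\nsubseteq P$ is what we want, so I must orient the inequalities carefully. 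The clean formulation: choose $f\in R$ with $f\in P$ for all $P\in X$ but $f\notin Q$; such $f$ exists because $X$ is inverse-closed and $Q\notin X$ (the sets $\{P\mid f\in P\}=V(f)\cap\mmax$ form a neighborhood basis for the inverse topology). Then $f$ is a nonunit in every $R_P$ with $P\in X$, so $1/f\notin R_P$ for all such $P$, whence $1/f\notin T$; but $f\notin Q=N\cap R$ means $f$ is a unit in $T_N=R_Q$, which does not immediately contradict $1/f\notin T$—so the argument must instead derive that $f\in N$, contradicting $f\notin Q\subseteq N$.

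\textbf{To make this precise} I would use that $N=\sqrt{fT}$-type behavior is controlled by $X$: since $f$ lies in every $P\in X$, the ideal $fT$ is contained in every $PT$, so $V(fT)\supseteq\{PT\mid P\in X\}$, and compactness of $X$ together with the Pr\"ufer (arithmetical) property lets me conclude that the only maximal ideals of $T$ containing $fT$ are among the $PT$ with $P\in X$. The crux is a compactness argument: cover the complement using finitely generated data and invoke that $X$ is inverse-closed hence an intersection of sets of the form $\{P\mid g\in P\}$. Once the reverse inclusion is established, injectivity of $P\mapsto PT$ on $X$ follows because distinct primes of the one-dimensional domain $R$ yield distinct contractions, completing the identification $\Max(T)=\{PT\mid P\in X\}$.
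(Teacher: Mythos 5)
Your reduction of the lemma to showing that every maximal ideal of $T$ contracts into $X$ is sound, but the separation argument you use for that step runs in the wrong direction, and this is a genuine gap. In the inverse topology the sets $V(I)\cap\mmax$, with $I$ finitely generated, are the basic \emph{open} sets; so closedness of $X$ together with $Q\notin X$ gives you a finitely generated ideal $I$ with $I\subseteq Q$ and $I\nsubseteq P$ for every $P\in X$ (a basic open neighborhood of the \emph{point} $Q$ disjoint from $X$). You instead assert the existence of $f$ with $f\in P$ for all $P\in X$ but $f\notin Q$, i.e., a basic open set containing the \emph{closed set} $X$ and missing $Q$. Closedness provides no such thing: it would force $\bigcap\{P\mid P\in X\}\nsubseteq Q$, and this intersection is typically $(0)$. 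Concretely, take $R=\insZ$ and $X=\Max(\insZ)\setminus\{(2)\}$: this $X$ is closed in $\mmax$ because $\{(2)\}=V(2)\cap\mmax$ is inverse-open, yet $\bigcap\{P\mid P\in X\}=(0)$, so for $Q=(2)$ no such $f$ exists. Your later claim that an inverse-closed set is ``an intersection of sets of the form $\{P\mid g\in P\}$'' is precisely the assertion that $X$ is Zariski-closed, which inverse-closed sets need not be; and your own text concedes that the intended contradiction does not actually come out.

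The gap is repairable by reversing the separation, after which the argument is close to the paper's. The paper quotes Gilmer's description of overrings of one-dimensional Pr\"ufer domains, $\Max(T)=\{QT\mid Q\in\Max(R),\ QT\neq T\}$, and then shows $QT=T$ for $Q\notin X$ by using compactness of $X$ to commute extension with the intersection: $QT=\bigcap_{P\in X}QR_P=\bigcap_{P\in X}R_P=T$. With the correctly oriented separation you could even avoid the compactness citation: choosing $I$ finitely generated with $I\subseteq Q$ and $V(I)\cap X=\emptyset$, the ideal $I$ is invertible ($R$ is Pr\"ufer), extension by an invertible ideal commutes with arbitrary intersections, and $IR_P=R_P$ for all $P\in X$, so $QT\supseteq IT=\bigcap_{P\in X}IR_P=T$. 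In either version, some structural input (Gilmer's theorem, or $T_N=R_{N\cap R}$) is still needed to know these are \emph{all} the maximal ideals of $T$; note also that your forward direction only produces a maximal ideal of $T$ lying over each $P\in X$, not that $PT$ itself is maximal, which is what the statement asserts and what Gilmer's theorem yields directly.
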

\begin{proof}
Since $R$ is a one-dimensional Pr\"ufer domain, the maximal ideals of $T$ are the ideals $QT$, where $Q$ are the maximal ideals of $R$ such that $QT\neq T$ \cite[Theorem 1]{gilmer-overrings}. If $Q\in X$, then clearly $QT\neq T$. If $Q\notin X$, then $QD_P=D_P$ for all $P\in X$. Since $X$ is closed in $\mmax$, it is closed in $\Spec(R)$, endowed with the Zariski topology, and in particular it is compact; by \cite[Corollary 5]{compact-intersections},
\begin{equation*}
QT=Q\left(\bigcap_{P\in X}R_P\right)=\bigcap_{P\in X}QR_P=\bigcap_{P\in X}R_P=T.
\end{equation*}
Thus the maximal ideals of $T$ are exactly the extensions of the elements of $X$.
\end{proof}

\subsection{Isolated points}
Let $X$ be a topological space. A point $p\in X$ is \emph{isolated} if $\{p\}$ is an open set; the set of non-isolated points of $X$ is called the \emph{derived set} of $X$, and is denoted by $\deriv(X)$. More generally, if $\alpha$ is an ordinal, then $\deriv^\alpha(X)$ is defined recursively by
\begin{equation*}
\deriv^\alpha(X):=\begin{cases}
X & \text{if~}\alpha=0;\\
\deriv(\deriv^\gamma(X)) & \text{if~}\alpha=\gamma+1;\\
\bigcap_{\beta<\alpha}\deriv^\beta(X) & \text{if~}\alpha\text{~is a limit ordinal.}
\end{cases}
\end{equation*}
A topological space is \emph{scattered} if $\deriv^\alpha(X)=\emptyset$ for some ordinal $\alpha$.

A characterization closely related to the following one was obtained in \cite[Lemma 3.1]{olberding-factoring-SP}, where the topology considered on the maximal space was the Zariski topology.
\begin{prop}\label{prop:isolated-mmax}
Let $R$ be an almost Dedekind domain and $M\in\mmax$. Then, $M$ is isolated in $\mmax$ if and only if $M$ is finitely generated.
\end{prop}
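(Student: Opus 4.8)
The plan is to prove the equivalence by unwinding what it means for a maximal ideal $M$ to be isolated in the inverse topology on $\mmax$, and relating this to the finite generation of $M$. Recall that on $\mmax$ the inverse and constructible topologies coincide, and that a basis for the inverse topology is obtained from the compact-open sets of the Zariski topology, which in turn are the $D(J)$ for $J$ finitely generated. Since $R$ is almost Dedekind, it is one-dimensional, so every nonzero prime is maximal; this means that for a finitely generated ideal $J$, the Zariski-closed set $V(J)\cap\mmax$ is finite (it consists of the finitely many maximal ideals containing $J$, corresponding to the ``places'' where the valuations are positive). This finiteness is the crucial structural fact I expect to drive the argument.

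First I would establish the easier implication. Suppose $M$ is finitely generated. In an almost Dedekind domain, since each localization is a DVR and $M$ is finitely generated, I would show that $\rad(M)$-type considerations force $V(M)\cap\mmax=\{M\}$: because $R$ is one-dimensional, the maximal ideals containing $M$ are just those primes containing $M$, and finite generation together with the Prüfer/valuative structure should give that $M$ is not contained in any other maximal ideal, i.e.\ $\{M\}=V(M)\cap\mmax$ is Zariski-closed. Then $\mmax\setminus\{M\}=D(M)\cap\mmax$ is Zariski-open with finitely generated defining ideal, hence compact-open in the Zariski topology, and therefore closed in the inverse topology. Consequently $\{M\}$ is open in the inverse topology, so $M$ is isolated in $\mmax$.

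For the converse, suppose $M$ is isolated, so $\{M\}$ is open in the inverse topology. Using the basis description of the inverse topology on $\mmax$ (clopen sets coming from compact-open Zariski sets), I would find a finitely generated ideal $J$ with $M\in \mmax\setminus(V(J)\cap\mmax)$ realizing the separation, and more precisely extract a clopen neighborhood of $M$ equal to $\{M\}$ built from the complement of some $D(J)$ with $J$ finitely generated. The goal is then to translate ``$\{M\}$ is isolated'' into the statement that $M$ itself is the radical of, or equals, a finitely generated ideal, and then to upgrade this to finite generation of $M$ using the DVR structure of $R_M$: in a DVR the maximal ideal is principal, and I would use Lemma~\ref{lemma:closed-intersect} or a direct local-global argument to pull back local principality to global finite generation, exploiting that $M$ is isolated (so locally around $M$ nothing else interferes).

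The hard part will be the converse direction, specifically the passage from the topological isolation of $M$ to an honest finite generating set. Isolation gives a finitely generated ideal $J$ separating $M$ from the rest of $\mmax$, but a priori $J$ need not generate $M$ or even have radical equal to $M$; one must argue that, after possibly adjusting $J$ and using that $R_M$ is a DVR with principal maximal ideal $MR_M=tR_M$, the element $t$ (or a suitable finite set including it) generates $M$ globally. The key leverage is that isolation means $M$ is the \emph{only} maximal ideal in a compact-open Zariski neighborhood, so the extension of $M$ to the intersection ring over the complementary closed set behaves well; via Lemma~\ref{lemma:closed-intersect} the finitely generated local data should glue to a finitely generated global ideal whose radical is $M$, and in an almost Dedekind domain $\rad(\text{f.g.})$ being equal to $M$ with $M$ isolated should force $M$ itself to be finitely generated.
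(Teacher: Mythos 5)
Your proposal has the right topological skeleton, but it rests on a false ``crucial structural fact'' and leaves the decisive step of the converse unproved. The claim that $V(J)\cap\mmax$ is finite for every finitely generated $J$ does not follow from one-dimensionality and is false in every almost Dedekind domain that is not Dedekind: finiteness of $V(x)\cap\mmax$ for all nonzero $x$ is precisely the finite-character condition, and an almost Dedekind domain with finite character is Dedekind. Worse, if that finiteness held, then (since $\mmax$ is Hausdorff, so points are closed) every basic inverse-open set $V(J)\cap\mmax$ could be shrunk to an isolated point, every point of $\mmax$ would be isolated, and the proposition would assert that \emph{every} maximal ideal is finitely generated --- contradicting the existence of the non-Dedekind almost Dedekind domains with critical ideals that this paper studies. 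Your forward direction survives only because it never actually needs this fact: the statement that $M$ lies in no other maximal ideal is automatic ($M$ is maximal), and the genuine content is just that $D(M)$ is compact open when $M$ is finitely generated, hence closed in the inverse topology, so $\{M\}=V(M)\cap\mmax$ is open. That part agrees with the paper.

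The genuine gap is in the converse. Your reduction is fine: isolation gives a finitely generated $J\subseteq M$ with $V(J)\cap\mmax=\{M\}$, and one-dimensionality then forces $\rad(J)=M$ (so your worry that $J$ might not have radical $M$ is unfounded). But the passage from ``$M=\rad(J)$ with $J$ finitely generated'' to ``$M$ is finitely generated'' is exactly what the proposition asserts, and you only say it ``should'' follow. The local uniformizer $t$ of $MR_M$ is not an element of $R$, and Lemma~\ref{lemma:closed-intersect}, which identifies the maximal ideals of an intersection of localizations, provides no mechanism for descending generation to $R$. The paper closes this gap with a single element: since $R_M$ is a DVR, $M\neq M^2$, so one can choose $a\in M\setminus M^2$; then $(J,a)$ is finitely generated, has radical $M$, and is not contained in $M^2$. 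Because $\rad(M^2)=M$ is maximal, $M^2$ is $M$-primary, so $a\notin M^2$ forces $v_M(a)=1$ and hence $(J,a)R_M=MR_M$, while $(J,a)R_N=R_N=MR_N$ for every $N\neq M$ since $M$ is the only maximal ideal containing $(J,a)$; comparing localizations at all maximal ideals gives $(J,a)=M$. Without this adjunction-and-localization step (or an equivalent), your argument stops short of the conclusion.
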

\begin{proof}
If $M$ is finitely generated, then $D(M)$ is compact in the Zariski topology and thus $V(M)=\{M\}$ is an open set of $\mmax$, i.e., $M$ is isolated. Conversely, if $M$ is isolated then $\{M\}=V(M)$ is open and compact, and thus $D(M)$ is a closed set of $\Spec(R)^\cons$; hence, it is a compact open set of $\Spec(R)$ (endowed with the Zariski topology), and thus $D(M)=D(I)$ for some finitely generated ideal $I$, i.e., $M=\rad(I)$ for some finitely generated ideal $I$. Take $a\in M\setminus M^2$ (which exists since $R$ is almost Dedekind); then, $(I,a)$ is a finitely generated ideal with radical $M$ that is not contained in $M^2$. Thus $(I,a)=M$ and $M$ is finitely generated. 
\end{proof}

\subsection{Critical ideals}\label{sect:critical}
Let $R$ be an almost Dedekind domain. A maximal ideal $M$ of $R$ is said to be \emph{critical} if, for every finitely generated ideal $I\subseteq M$, there is a maximal ideal $N$ such that $I\subseteq N^2$; equivalently, $M$ is \emph{not} critical if it contains an invertible radical ideal. We denote by $\inscrit(R)$ the set of critical maximal ideals of $R$.

\begin{prop}\label{prop:crit-closed}
Let $R$ be an almost Dedekind domain. Then, $\inscrit(R)$ is a closed set of $\mmax$.
\end{prop}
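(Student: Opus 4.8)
The plan is to show that the complement of $\inscrit(R)$ in $\mmax$ -- the set of non-critical maximal ideals -- is open in the inverse topology. I would start from the equivalent description of non-criticality recorded just above the statement: a maximal ideal $M$ fails to be critical precisely when it contains an invertible radical ideal. Since invertible ideals are always finitely generated, any such witnessing ideal $J\subseteq M$ is finitely generated, and this is what will let me connect non-criticality to the basic open sets of the inverse topology.

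The key observation is that the witness propagates upward: if $J$ is an invertible radical ideal and $J\subseteq N$ for some maximal ideal $N$, then $N$ is automatically non-critical, with the same witness $J$. Consequently, setting $U_J:=V(J)\cap\mmax=\{N\in\mmax\mid J\subseteq N\}$ for each invertible radical ideal $J$, the whole set $U_J$ is contained in the set of non-critical ideals, while conversely every non-critical ideal lies in some $U_J$. Hence the set of non-critical ideals is exactly $\bigcup_J U_J$, the union taken over all invertible radical ideals $J$.

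It remains to check that each $U_J$ is open in $\mmax$. Since $J$ is finitely generated, $D(J)$ is a compact open subset of $\Spec(R)$ in the Zariski topology; by the very definition of the inverse topology $D(J)$ is therefore closed, so its complement $V(J)$ is open in the inverse topology. As the inverse and constructible topologies coincide on $\mmax$, the set $U_J=V(J)\cap\mmax$ is in fact clopen there. Thus the set of non-critical ideals is a union of open sets, hence open, and $\inscrit(R)$ is closed.

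I do not expect a serious obstacle: the only routine points needing care are the standard fact that invertible ideals are finitely generated and the bookkeeping relating $D(J)$ and $V(J)$ under the passage to the inverse topology. The conceptual crux, and the step I would single out, is the upward propagation of the witness $J$, which is what upgrades the merely pointwise existence statement defining non-criticality into a genuinely open condition on $\mmax$.
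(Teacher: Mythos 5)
Your proof is correct and is essentially the paper's own argument in De Morgan dual form: the paper writes $\inscrit(R)=\bigcap\{D(I)\cap\mmax\mid I\text{ finitely generated radical}\}$ and notes each $D(I)$ is closed in the inverse topology, while you write the complement as $\bigcup\{V(J)\cap\mmax\mid J\text{ invertible radical}\}$ and note each $V(J)$ is open --- the same identification and the same topological fact. The ``upward propagation'' of the witness you highlight is exactly what makes the paper's set-theoretic identity $\Omega=\inscrit(R)$ work, so there is no substantive difference.
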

\begin{proof}
Let $\Omega:=\Max(R)\setminus\bigcup\{V(I)\mid I$ is a finitely generated radical ideal$\}=\bigcap\{D(I)\mid I$ is a finitely generated radical ideal$\}$: then, by definition, $\Omega$ is closed in the inverse topology of $\Max(R)$. We claim that $\Omega=\inscrit(R)$.

Indeed, if $M\in\Omega$ and $J\subseteq M$ is finitely generated then $M\in V(J)$, and thus by definition $J$ cannot be radical; hence, there is an $N\in\Max(R)$ such that $J\subseteq N^2$, and so $M\in\inscrit(R)$. Conversely, if $M\in\inscrit(R)$, then $M$ does not contain any invertible radical ideal, and thus $M\in\Omega$. The claim is proved.
\end{proof}

\begin{prop}\label{prop:crit-deriv}
Let $R$ be an almost Dedekind domain. Then, $\inscrit(R)\subseteq\deriv(\mmax)$.
\end{prop}
\begin{proof}
If $M\notin\deriv(\mmax)$, then by Proposition \ref{prop:isolated-mmax} $M$ is finitely generated, and thus it cannot be critical.
\end{proof}

\subsection{Bounded maps and free abelian groups}
Let $X$ be a set. We denote by $\insfunct(X,\insZ)$ and by $\insbound(X,\insZ)$ the group of all function (respectively, all bounded functions) $X\longrightarrow\insZ$, with the operation being the pointwise addition. Then, $\insbound(X,\insZ)$ is a free abelian group \cite[Satz 1]{nobeling}, and thus so are all its subgroups.

Let $G\subseteq\insbound(X,\insZ)$. We say that $G$ is a \emph{Specker group} if, for every $g\in G$ and every $n\inZ$, the function $\chi_{g^{-1}(b)}$ belongs to $G$, where $\chi_A$ denotes the characteristic function of $A$. If $H\subseteq G$ are Specker groups, then $G=H\oplus S$ for some free subgroup $S$ of $G$ \cite[Satz 2]{nobeling}.

Let now $X$ be a topological space, endow $\insZ$ with the discrete topology, and let $f:X\longrightarrow\insZ$ be a function (not necessarily continuous). The \emph{zero set} of $f$ is $Z(f):=\{x\in X\mid f(x)=0\}$, while the \emph{non-zero set} (or \emph{cozero set}) is $X\setminus Z(f)$; the \emph{support} of $f$, denoted by $\supp(f)$, is the closure of its non-zero set. We say that $f$ has \emph{compact support} if $\supp(f)$ is compact, and we denote by $\insfunct_c(X,\insZ)$ the set of functions with compact support, which is a subgroup of $\insfunct(X,\insZ)$. If $X$ is compact, every function has compact support.

We denote by $\mathcal{C}(X,\insZ)$ the set of all continuous functions $X\longrightarrow\insZ$, and by $\funcontcomp(X,\insZ)$ the set of all continuous functions with compact support. Both $\inscont(X,\insZ)$ and $\funcontcomp(X,\insZ)$ are groups under pointwise addition, and thus they are subgroups of $\insfunct(X,\insZ)$.

We shall use repeatedly the following fact.
\begin{lemma}\label{lemma:cont->bound}
Let $f:X\longrightarrow\insZ$ be a continuous function. If $f$ has compact support, then $f$ is bounded.
\end{lemma}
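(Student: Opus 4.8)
The plan is to exploit the discreteness of $\insZ$. Since $f$ is continuous and every singleton $\{n\}$ is open in the discrete topology, each preimage $f^{-1}(n)$ is an open subset of $X$. Hence the family $\{f^{-1}(n)\mid n\in\insZ\}$ is an open cover of $X$ (indeed a partition into clopen sets), and in particular it covers the support $K:=\supp(f)$.

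Next I would invoke the compactness of $K$ to extract a finite subcover: there are integers $n_1,\dots,n_k$ with $K\subseteq f^{-1}(n_1)\cup\cdots\cup f^{-1}(n_k)$. The decisive observation is then that the non-zero set $X\setminus Z(f)$ is contained in its own closure $K=\supp(f)$, so every point at which $f$ does not vanish already lies in one of the $f^{-1}(n_i)$. Combining the two cases, for an arbitrary $x\in X$ either $f(x)=0$, or else $x\in X\setminus Z(f)\subseteq K$ forces $f(x)=n_i$ for some $i$; in either case $f(x)\in\{0,n_1,\dots,n_k\}$.

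Since $\{0,n_1,\dots,n_k\}$ is a finite set, $f$ assumes only finitely many values, and therefore $\lvert f(x)\rvert\le\max\{\lvert n_1\rvert,\dots,\lvert n_k\rvert\}$ for all $x$, which is exactly the desired boundedness. I do not expect a genuine obstacle in this argument; the only point requiring a moment's attention is that $\supp(f)$ is defined as the \emph{closure} of the non-zero set rather than the non-zero set itself, but this causes no difficulty, as we only use the automatic inclusion of the non-zero set into its closure (and on $Z(f)$ the function is trivially bounded).
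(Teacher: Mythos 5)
Your proof is correct and follows essentially the same route as the paper: cover the compact support by the open (indeed clopen) preimages $f^{-1}(n)$, extract a finite subcover, and conclude that $f$ takes only finitely many values. Your explicit remark that the non-zero set sits inside its closure $\supp(f)$ is exactly the detail the paper's terser argument implicitly relies on, so nothing is missing.
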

\begin{proof}
Let $X_i:=f^{-1}(i)$. Then, the $X_i$ are open (since $\insZ$ has the discrete topology), pairwise disjoint and a cover of $\supp(f)$. Since $f$ has compact support, the cover must have a finite subcover, and thus only finitely many of them may be nonempty. It follows that $X_i=\emptyset$ when $|i|$ is large, that is, $f$ is bounded.
\end{proof}

In particular, the lemma implies that $\insfunct_c(X,\insZ)$ and $\funcontcomp(X,\insZ)$ are subgroups of $\insbound(X,\insZ)$, and thus are free.

\section{The map associated to an ideal}

Let $R$ be an almost Dedekind domain, and let $M$ be a maximal ideal. Then, $R_M$ is a discrete valuation ring, and thus it is associated to a surjective valuation map $v_M:K\setminus\{0\}\longrightarrow\insZ$. In particular, for every fractional ideal $I$ (not necessarily finitely generated) the quantity
\begin{equation*}
v_M(I):=\inf\{v_M(i)\mid i\in I\setminus\{0\}\}
\end{equation*}
is well-defined, and corresponds to the integer $n$ such that $IR_M=(MR_M)^n$ (where, if $n:=-m$ is negative, $(MR_M)^n=((MR_M)^m)^{-1}=(R_M:(MR_M)^m)$). Therefore, we can associate to each ideal $I$ a map
\begin{equation*}
\begin{aligned}
\nu_I\colon\mmax & \longrightarrow\insZ,\\
M & \longmapsto v_M(I).
\end{aligned}
\end{equation*}

\begin{prop}\label{prop:nuI}
Let $R$ be an almost Dedekind domain, and let $I,J$ be fractional ideals of $R$. Then, the following hold.
\begin{enumerate}[(a)]
\item $\nu_I\leq\nu_J$ if and only if $I\supseteq J$;
\item $\nu_I=\nu_J$ if and only if $I=J$.
\item $\nu_{IJ}=\nu_I+\nu_J$.
\item $\nu_{I+J}=\sup\{\nu_I,\nu_J\}$.
\item $\nu_{I\cap J}=\inf\{\nu_I,\nu_J\}$.
\item If $I$ is an integral ideal of $R$, then $\nu_I$ is bounded if and only if $\rad(I)^k\subseteq I$ for some $k$.
\end{enumerate}
\end{prop}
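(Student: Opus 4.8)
The plan is to convert the statement into a pointwise comparison of the maps $\nu_I$ and $\nu_{\rad(I)}$, exploiting the dictionary between ideal inclusion, multiplication, and inequality of associated maps recorded in parts (a) and (c). Since $I$ is integral we have $IR_M\subseteq R_M$, hence $\nu_I(M)\geq 0$ for every $M\in\mmax$; consequently $\nu_I$ is bounded if and only if $\sup_{M\in\mmax}\nu_I(M)<\infty$, and it is this finite supremum that I want to control by a power of $\rad(I)$.

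First I would compute $\nu_{\rad(I)}$ pointwise. Because localization commutes with the radical, $\rad(I)R_M=\rad(IR_M)$ for every $M\in\mmax$. If $I\subseteq M$, then $n:=\nu_I(M)\geq 1$ and $IR_M=(MR_M)^n$, so $\rad(IR_M)=MR_M$ and hence $\nu_{\rad(I)}(M)=1$. If $I\nsubseteq M$, then $\rad(I)\nsubseteq M$ as well (for a prime $M$ one has $\rad(I)\subseteq M$ iff $I\subseteq M$), so $\rad(I)R_M=R_M$ and $\nu_{\rad(I)}(M)=0$. Thus $\nu_{\rad(I)}$ is the characteristic function of $\{M\in\mmax\mid I\subseteq M\}$, which is exactly the non-zero set of $\nu_I$.

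Next, by part (c) we have $\nu_{\rad(I)^k}=k\,\nu_{\rad(I)}$ for every $k\geq 1$, and by part (a) the containment $\rad(I)^k\subseteq I$ is equivalent to $\nu_I\leq\nu_{\rad(I)^k}=k\,\nu_{\rad(I)}$. Combining this with the computation above, the inequality $\nu_I(M)\leq k\,\nu_{\rad(I)}(M)$ is automatic when $I\nsubseteq M$ (both sides vanish) and reads $\nu_I(M)\leq k$ when $I\subseteq M$. Therefore $\rad(I)^k\subseteq I$ holds if and only if $\nu_I(M)\leq k$ for all $M\in\mmax$, i.e. if and only if $k$ is an upper bound for $\nu_I$. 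Putting these together, some $k$ with $\rad(I)^k\subseteq I$ exists precisely when $\nu_I$ admits a finite upper bound, which (as $\nu_I\geq 0$) is the same as $\nu_I$ being bounded.

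The only point requiring a little care is the local computation of $\nu_{\rad(I)}$, specifically the interchange of radical and localization and the identification of $\rad\bigl((MR_M)^n\bigr)=MR_M$ in the discrete valuation ring $R_M$ for $n\geq 1$; everything else is a direct application of the already established parts (a) and (c). I do not expect any genuine obstacle here, only the bookkeeping needed to see that the value of $\nu_{\rad(I)}$ is identically $1$ on the non-zero set of $\nu_I$ and $0$ elsewhere.
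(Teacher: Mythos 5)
Your proof of part (f) is correct, and it is not circular: parts (a) and (c), which you invoke as a ``dictionary,'' can be established independently of (f) --- (a) from the fact that every fractional ideal satisfies $I=\bigcap_{M\in\mmax}IR_M$ together with the order-reversal of $v_M$, and (c) by computing $(IJ)R_M=(IR_M)(JR_M)$ in each discrete valuation ring $R_M$. The paper offers no proof at all (the entire proposition is labelled ``Straightforward''), so your local computation of $\nu_{\rad(I)}$ --- which the paper in fact records separately later, as Proposition~\ref{prop:nuI-radical}(b) --- is a legitimate filling-in of detail rather than a divergence of method.

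Two caveats. First, your proposal covers only (f): parts (a)--(e) are also part of the statement and a complete writeup should dispose of them, even if each takes only a line of the kind sketched above. Second, if you do write those lines, note that (d) and (e) as literally printed are interchanged: since $v_M$ reverses inclusions, the sum $I+J$ (the supremum of $I$ and $J$ under inclusion) satisfies $\nu_{I+J}=\inf\{\nu_I,\nu_J\}$, while the intersection satisfies $\nu_{I\cap J}=\sup\{\nu_I,\nu_J\}$; for instance, with $R=\insZ$, $I=2\insZ$, $J=3\insZ$ one has $I+J=\insZ$ and $I\cap J=6\insZ$. The paper itself uses the corrected form $\nu_{I\cap R}=\sup\{\nu_I,0\}$ in the proof of Lemma~\ref{lemma:quot-cont}, so this is a typo in the statement rather than a problem with your argument --- but since you never touch (d) and (e), your proposal neither inherits nor corrects it, and any completion of the proof must state the right translation.
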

\begin{proof}
Straightforward.
\end{proof}

\begin{lemma}\label{lemma:quot-cont}
Let $R$ be an almost Dedekind domain and let $I\subseteq R$ be invertible. Then, $I=JL^{-1}$ for some invertible ideals $J,L\subseteq R$; moreover, if $\nu_I$ is continuous then we can take $J,L$ continuous.
\end{lemma}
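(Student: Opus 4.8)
The plan is to avoid the naive decomposition obtained by clearing a denominator, namely $I=(dI)(dR)^{-1}$ with $dI\subseteq R$: although this exhibits $I$ as a quotient of two integral invertible ideals, the factor $dR$ is principal and $\nu_{dR}$ need not be continuous, so continuity of $\nu_I$ would not be inherited by the factors. Instead I would split $\nu_I$ into its positive and negative parts \emph{intrinsically}, by intersecting with $R$. Concretely, I would set
\[
J:=I\cap R,\qquad L:=I^{-1}\cap R.
\]
Both are contained in $R$, hence integral. (Here I allow $I$ to be any invertible fractional ideal; when $I$ is already integral one simply gets $L=R$, so the statement for $I\subseteq R$ is the special case $\nu_I\geq 0$.)

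The first point to settle is that $J$ and $L$ are invertible rather than merely integral ideals. Since $R$ is almost Dedekind it is Pr\"ufer, so a nonzero fractional ideal is invertible precisely when it is finitely generated, and invertible ideals are stable under finite sums and intersections. Explicitly, $I+R$ is finitely generated, hence invertible, and the Pr\"ufer identity $(I+R)(I\cap R)=IR=I$ rewrites $J=I\cap R=I\,(I+R)^{-1}$ as a product of invertible ideals; the same argument with $I^{-1}$ in place of $I$ shows that $L$ is invertible.

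Next I would translate everything through the map $\nu$. Working locally at each $M$, intersecting with $R$ replaces $v_M(I)$ by the larger of $v_M(I)$ and $v_M(R)=0$, so
\[
\nu_J=\sup\{\nu_I,0\},\qquad \nu_L=\sup\{\nu_{I^{-1}},0\}=\sup\{-\nu_I,0\},
\]
where $\nu_{I^{-1}}=-\nu_I$ follows from $\nu_{II^{-1}}=\nu_R=0$ and the additivity of $\nu$ under products (Proposition \ref{prop:nuI}). Splitting according to the sign of $\nu_I(M)$ gives $\nu_J(M)-\nu_L(M)=\nu_I(M)$ for every $M$, whence $\nu_{JL^{-1}}=\nu_J-\nu_L=\nu_I$ and therefore $JL^{-1}=I$ by Proposition \ref{prop:nuI}. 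This establishes the first assertion.

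Finally, for the ``moreover'' clause I would observe that $\nu_J$ and $\nu_L$ are the composites of $\nu_I$ with the maps $n\mapsto\max\{n,0\}$ and $n\mapsto\max\{-n,0\}$ of $\insZ$ into itself. Because $\insZ$ carries the discrete topology these maps are automatically continuous, so if $\nu_I$ is continuous then so are $\nu_J$ and $\nu_L$; thus $J$ and $L$ may be taken continuous. I expect the only genuinely delicate step to be the invertibility of $J$ and $L$, which is exactly where the Pr\"ufer hypothesis enters (through closure of invertible ideals under sums and intersections); everything else is a routine dictionary between ideals and their associated functions, while the one conceptual point worth stressing is that the \emph{canonical} choice $J=I\cap R$, $L=I^{-1}\cap R$ is what makes continuity descend to the factors.
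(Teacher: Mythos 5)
Your proposal is correct and takes essentially the same approach as the paper: the same canonical decomposition $J=I\cap R$, $L=I^{-1}\cap R$, with continuity of the factors deduced from $\nu_J=\sup\{\nu_I,0\}$ and $\nu_L=\sup\{-\nu_I,0\}$. The only differences are in the routine verifications: the paper obtains $I=JL^{-1}$ directly from Gilmer's identity $I(I^{-1}\cap R)=II^{-1}\cap IR=R\cap I$ and leaves the invertibility of $J$ and $L$ implicit, whereas you check the identity pointwise via $\nu$ and injectivity of $I\mapsto\nu_I$, and justify invertibility through the Pr\"ufer identity $(I+R)(I\cap R)=IR$.
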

\begin{proof}
By \cite[Theorem 25.2(c) and Remark 25.3]{gilmer}, $I(I^{-1}\cap R)=II^{-1}\cap IR=R\cap I$. Set thus $J:=I\cap R$ and $L:=I^{-1}\cap R$: then, $J$ and $L$ are invertible integral ideals of $R$ and $I=JL^{-1}$. Moreover, $\nu_J=\nu_{I\cap R}=\sup\{\nu_I,\nu_R\}=\sup\{\nu_I,0\}$ is continuous since $\nu_I$ and the zero function are continuous; hence also $\nu_L=\nu_I-\nu_J$ is continuous, as claimed.
\end{proof}

\begin{prop}\label{prop:suppnuI}
Let $R$ be an almost Dedekind domain, and let $I$ be a nonzero fractional ideal of $R$. Then, the following hold.
\begin{enumerate}[(a)]
\item\label{prop:suppnuI:integ} If $I\subseteq R$, then $Z(\nu_I)=D(I)$, and $\supp\nu_I=\Spec(D)\setminus Z(\nu_I)=V(I)$. In particular, $Z(\nu_I)$ and $\supp\nu_I$ are disjoint.
\item\label{prop:suppnuI:compact} $\supp\nu_I$ is compact.
\item\label{prop:suppnuI:invt} If $I$ is invertible, then $\supp\nu_I=\Spec(R)\setminus Z(\nu_I)$; in particular, $\supp\nu_I$ and $Z(\nu_I)$ are disjoint.
\end{enumerate}
\end{prop}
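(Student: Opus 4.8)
The plan is to handle the three parts in order, in each case computing $Z(\nu_I)$ explicitly and then recognizing the closure of the cozero set as the stated closed set; the only genuine work is the compactness in (b) and the openness of $Z(\nu_I)$ in (c).

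For (a), fix a nonzero integral ideal $I\subseteq R$. Since $IR_M\subseteq R_M$ we have $\nu_I(M)=v_M(I)\geq 0$, and $\nu_I(M)=0$ exactly when $IR_M=R_M$, i.e.\ when $I\nsubseteq M$; this gives $Z(\nu_I)=D(I)\cap\mmax$, and hence the cozero set $\mmax\setminus Z(\nu_I)$ equals $V(I)\cap\mmax=V(I)$, the last equality because $I\neq 0$ and $R$ is one-dimensional, so every prime containing $I$ is maximal. The point is then to see that $V(I)$ is already closed in the inverse topology, so that it equals its own closure $\supp\nu_I$. I would write $V(I)=\bigcap_{f\in I}V(f)$; each $D(f)$ is a compact open set of the Zariski topology, hence clopen in the constructible topology, which on $\mmax$ coincides with the inverse topology, so each $V(f)=\mmax\setminus D(f)$ is clopen in $\mmax$ and the intersection $V(I)$ is inverse-closed. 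The disjointness of $Z(\nu_I)$ and $\supp\nu_I$ is then just the disjointness of $D(I)$ and $V(I)$.

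For (b), given a general nonzero fractional ideal $I$ I would choose $d\in R\setminus\{0\}$ with $dI\subseteq R$. By Proposition \ref{prop:nuI}(c), $\nu_I=\nu_{dI}-\nu_{(d)}$, so wherever $\nu_{dI}$ and $\nu_{(d)}$ both vanish $\nu_I$ vanishes too; by part (a) the cozero set of $\nu_I$ is therefore contained in $V(dI)\cup V(d)$. The main obstacle is that this containing set must be shown compact even though $\mmax$ itself need not be compact. I would prove that $V(J)$ is inverse-compact for every nonzero integral ideal $J$: it is Zariski-closed, hence closed in the finer constructible topology, hence compact in $\Spec(R)^{\cons}$; since the identity $\Spec(R)^{\cons}\to\Spec(R)^{\mathrm{inv}}$ is continuous, $V(J)$ is compact in the inverse topology of $\Spec(R)$, and as $V(J)\subseteq\mmax$ this is exactly its topology as a subspace of $\mmax$. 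Thus $V(dI)\cup V(d)$ is compact; since $\mmax$ is Hausdorff this set is closed and so contains $\supp\nu_I$, and a closed subset of a compact set in a Hausdorff space is compact, which yields the claim.

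For (c), assume $I$ invertible, so $I$ is finitely generated and Lemma \ref{lemma:quot-cont} writes $I=JL^{-1}$ with $J=I\cap R$ and $L=I^{-1}\cap R$ invertible, hence finitely generated, integral ideals. Using $\nu_J=\sup\{\nu_I,0\}$ and $\nu_L=\sup\{-\nu_I,0\}$ (the latter since $\nu_{I^{-1}}=-\nu_I$), one checks that $\nu_I(M)=0$ if and only if $\nu_J(M)=\nu_L(M)=0$, that is, $Z(\nu_I)=Z(\nu_J)\cap Z(\nu_L)$. By part (a), $Z(\nu_J)=D(J)$ and $Z(\nu_L)=D(L)$, and since $J,L$ are finitely generated these are clopen in $\mmax$; hence $Z(\nu_I)$ is open, its complement is already closed, and $\supp\nu_I=\mmax\setminus Z(\nu_I)$. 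The hard part here is precisely this openness of $Z(\nu_I)$: for a merely invertible ideal $\nu_I$ itself may fail to be continuous, so the argument must be routed through the finitely generated ideals $J$ and $L$ rather than through $\nu_I$ directly.
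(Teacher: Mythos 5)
Your proof is correct and follows essentially the same route as the paper's: the same computation and constructible-closedness of $V(I)$ in (a), the same reduction $\nu_I=\nu_{dI}-\nu_{dR}$ with compactness coming from $\Spec(R)^\cons$ in (b), and the same decomposition $I=JL^{-1}$ via Lemma \ref{lemma:quot-cont} together with the identity $Z(\nu_I)=Z(\nu_J)\cap Z(\nu_L)$ in (c). The only (harmless) redundancy is that in (c) you invoke finite generation of $J$ and $L$ to make $D(J)$ and $D(L)$ clopen, whereas openness of $Z(\nu_I)$ already holds for arbitrary integral ideals because the constructible topology refines the Zariski topology; the paper instead concludes by noting that, by part (a), the cozero sets of $\nu_J$ and $\nu_L$ are themselves closed.
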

\begin{proof}
\ref{prop:suppnuI:integ} If $I\subseteq R$, then clearly $\nu_I(P)>0$ if and only if $I\subseteq P$, if and only if $P\in V(I)$. Hence, $Z(\nu_I)=\Spec(D)\setminus V(I)=D(I)$ and $\supp(I)$ is the closure of $V(I)$. Moreover, $V(I)\subseteq\mmax$ is always closed in the Zariski topology, and thus also in the constructible topology; hence $\supp\nu_I=V(I)$.

\ref{prop:suppnuI:compact} Let $d\in R$. Then, $\nu_{dR}=-\nu_{d^{-1}R}$, and thus $\supp\nu_{dR}=\supp\nu_{d^{-1}R}$. Note also that, if $I\subseteq R$, then $\supp\nu_I=V(I)$ is closed in the compact space $\Spec(D)^\cons$ and thus it is itself compact.

If now $I$ is a fractional ideal of $R$, we can find a $d\in R$, $d\neq 0$ such that $dI\subseteq R$. Then, $Z(\nu_{d^{-1}R})\cup Z(\nu_{dI})\subseteq Z(\nu_I)$, and thus
\begin{equation*}
\supp\nu_I\subseteq\supp\nu_{d^{-1}R}\cup\supp\nu_{dI}=\supp\nu_{dR}\cup\supp\nu_{dI}.
\end{equation*}
Since $d\in R$ and $dI\subseteq R$, the right hand side is compact (being the union of two compact spaces); hence, also $\supp\nu_I$ is compact, as it is the closed subset of a compact set.

\ref{prop:suppnuI:invt} By Lemma \ref{lemma:quot-cont}, $I=JL^{-1}$ with $J=I\cap R$ and $L=I^{-1}\cap R$; thus, $\nu_I=\nu_J-\nu_L$ and $\supp\nu_I\subseteq\supp\nu_J\cup\supp\nu_L$. We claim that $Z(\nu_I)=Z(\nu_J)\cap Z(\nu_L)$. Indeed, if $P$ is in the intersection then it is clearly in $Z(\nu_I)$. If $P\in Z(\nu_I)$, then $IR_P=R_P$, and thus $I\cap R\nsubseteq P$, so that $P\in Z(\nu_{I\cap R})=Z(\nu_J)$, and this forces also $P\in Z(\nu_{I^{-1}\cap R})=Z(\nu_L)$. Therefore, using \ref{prop:suppnuI:integ},
\begin{equation*}
\begin{aligned}
\supp\nu_I\subseteq & \supp\nu_J\cup\supp\nu_L=\\
= & (\Spec(D)\setminus Z(\nu_J))\cup(\Spec(D)\setminus Z(\nu_L))=\\
= & \Spec(D)\setminus (Z(\nu_J)\cap Z(\nu_L))=\\
= & \Spec(D)\setminus Z(\nu_I)\subseteq\supp\nu_I
\end{aligned}
\end{equation*}
since $\supp\nu_I$ is, by definition, the closure of $\Spec(D)\setminus Z(\nu_I)$. Hence $\supp\nu_I=\Spec(D)\setminus Z(\nu_I)$, as claimed.
\end{proof}

\begin{comment}
\begin{cor}\label{cor:compsupp}
Let $R$ be an almost Dedekind domain and let $I$ be an invertible fractional ideal of $I$. Then:
\begin{enumerate}[(a)]
\item\label{cor:compsupp:fact} $I=(I\cap R)(I^{-1}\cap R)^{-1}$;
\item\label{cor:compsupp:supp} $\supp\nu_I$ is compact.
\end{enumerate}
\end{cor}
\begin{proof}
\ref{cor:compsupp:fact} By \cite[Theorem 25.2(c) and Remark 25.3]{gilmer}, $I(I^{-1}\cap R)=II^{-1}\cap IR=R\cap I$. The claim follows.

\ref{cor:compsupp:supp} Let $P\in\mmax$. Then, $\nu_I(P)>0$ if and only if $I\cap R\subseteq P$, i.e., if and only if $\nu_{I\cap R}(P)>0$; conversely, $\nu_I(P)<0$ if and only if $\nu_{I^{-1}}(P)>0$, i.e., if and only if $\nu_{I^{-1}\cap R}(P)>0$. Thus, $\supp\nu_I=\supp\nu_{I\cap R}\cup\sup\nu_{I^{-1}\cap R}$ is the union of two compact spaces and therefore compact.
\end{proof}
\end{comment}

\begin{ex}\label{ex:noncompsupp}
For arbitrary fractional ideals $I$, $\supp\nu_I$ is not always disjoint from $Z(\nu_I)$.

Indeed, let $R$ be an almost Dedekind domain with a single noninvertible maximal ideal $M$, and suppose that there is a radical ideal $J$ that is not invertible (this surely happens if $R$ is not an SP-domain). Then, $\mmax\setminus\{M\}$ is a discrete space, and if $I$ is an ideal not contained in $M$ then $I$ is invertible and $V(I)$ is finite; in particular, $J\subseteq M$.

If $N\in\mmax\setminus\{M\}$, then $(R:J)R_N=(R_N:JR_N)$ by \cite[Lemma 5.3]{starloc} and \cite[Proposition 8.3]{jaff-derived}, and so $J(R:J)R_N=R_N$; in particular, $Z(\nu_{(R:J)})\setminus\{M\}=Z(\nu_J)\setminus\{M\}$. On the other hand, since $J$ is not invertible, $J(R:J)\subsetneq R$, and so $J(R:J)\subseteq M$. Therefore, $(\nu_J+\nu_{(R:J)})(M)\geq 1$, and since $R\subseteq(R:J)$ we have $\nu_{(R:J)}(M)=0$, so that $M\in Z(\nu_{(R:J)})$. On the other hand, $\supp\nu_{(R:J)}$ contains infinitely many points in $\mmax\setminus\{M\}$, which is a discrete set; hence, $\supp\nu_{(R:J)}$ must also contain $M$. Thus, $M\in Z(\nu_{(R:J)})\cap\supp\nu_{(R:J)}\neq\emptyset$.
\end{ex}

We now want to investigate when the map $\nu_I$ is continuous, with respect to the inverse topology. In this context, a particular importance have radical ideals.
\begin{prop}\label{prop:nuI-radical}
Let $R$ be an almost Dedekind domain, and let $I$ be an ideal of $R$.
\begin{enumerate}[(a)]
\item\label{prop:nuI-radical:rg} $I$ is radical if and only if the range of $\nu_I$ is $\{0,1\}$.
\end{enumerate}
Suppose now that $I$ is radical.
\begin{enumerate}[(a),resume]
\item\label{prop:nuI-radical:car} $\nu_I$ is equal to the characteristic function of $V(I)$.
\item\label{prop:nuI-radical:equiv} The following are equivalent:
\begin{enumerate}[(i)]
\item\label{prop:nuI-radical:equiv:cont} $\nu_I$ is continuous;
\item\label{prop:nuI-radical:equiv:clopen} $V(I)$ is clopen in $\mmax$;
\item\label{prop:nuI-radical:equiv:rad} $I=\rad(J)$ for some invertible ideal $J$.
\end{enumerate}
\end{enumerate}
\end{prop}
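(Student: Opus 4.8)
The plan is to reduce everything to the local description of $\nu_I$: for an integral ideal $I$ and a maximal ideal $M$, the value $\nu_I(M)$ is the exponent $n\ge 0$ with $IR_M=(MR_M)^n$, so that $\nu_I(M)\ge 1$ exactly when $I\subseteq M$, i.e. when $M\in V(I)$. For part \ref{prop:nuI-radical:rg} I would argue locally: if $I$ is radical then $IR_M=\rad(I)R_M=\rad(IR_M)$ is a radical ideal of the discrete valuation ring $R_M$, hence equal to $R_M$ or to $MR_M$, giving $\nu_I(M)\in\{0,1\}$. Conversely, if $\nu_I$ takes values in $\{0,1\}$, then $\nu_I(M)=1$ precisely when $M\in V(I)$, which is exactly the value of $\nu_{\rad(I)}(M)$; thus $\nu_I=\nu_{\rad(I)}$, and Proposition \ref{prop:nuI}(b) forces $I=\rad(I)$. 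Part \ref{prop:nuI-radical:car} then drops out of the same computation: when $I$ is radical, $\nu_I(M)=1$ for $M\in V(I)$ and $\nu_I(M)=0$ otherwise, i.e. $\nu_I=\chi_{V(I)}$.

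For the equivalences in \ref{prop:nuI-radical:equiv} I would start from $\nu_I=\chi_{V(I)}$. Since $\insZ$ carries the discrete topology, a characteristic function $\chi_A$ is continuous exactly when $A$ is clopen; as $V(I)$ is Zariski-closed and the topology of $\mmax$ (the constructible topology) is finer than the Zariski topology, $V(I)$ is closed in $\mmax$, so continuity of $\nu_I$ is equivalent to $V(I)$ being open, i.e. clopen. This gives \ref{prop:nuI-radical:equiv:cont}$\Leftrightarrow$\ref{prop:nuI-radical:equiv:clopen}. For \ref{prop:nuI-radical:equiv:rad}$\Rightarrow$\ref{prop:nuI-radical:equiv:clopen}, note that an invertible ideal $J$ of a Pr\"ufer domain is finitely generated, so $D(J)$ is compact in the Zariski topology and therefore closed in the inverse topology; hence $V(J)=V(\rad J)=V(I)$ is open in $\mmax$, and being closed it is clopen.

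The main work is \ref{prop:nuI-radical:equiv:clopen}$\Rightarrow$\ref{prop:nuI-radical:equiv:rad}, where the topology must produce a single finitely generated ideal. The key observation I would establish first is that the sets $V(J)$, with $J$ ranging over finitely generated ideals, form a basis of open sets for the inverse topology on $\mmax$: the compact open Zariski sets $D(J)$ are a basis of closed sets for the inverse topology, their complements $V(J)$ are a sub-basis of opens, and $V(J_1)\cap\cdots\cap V(J_k)=V(J_1+\cdots+J_k)$ keeps us inside this family. Assuming $V(I)$ is clopen, hence open, I would write $V(I)=\bigcup_\alpha V(J_\alpha)$ with each $J_\alpha$ finitely generated and each $V(J_\alpha)\subseteq V(I)$. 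Because $V(I)$ is closed in $\Spec(R)^\cons$, which is compact, $V(I)$ is compact, so finitely many members suffice: $V(I)=V(J_{\alpha_1})\cup\cdots\cup V(J_{\alpha_n})=V(J)$ with $J:=J_{\alpha_1}\cdots J_{\alpha_n}$ finitely generated, hence invertible. Finally, since $I$ is radical and $V(J)=V(I)$, we get $\rad(J)=\bigcap_{P\in V(J)}P=\bigcap_{P\in V(I)}P=I$, which is exactly \ref{prop:nuI-radical:equiv:rad}.

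The one point demanding care is the reduction from an arbitrary open cover of $V(I)$ to a finite subcover together with the passage to the product ideal; this is where compactness in the constructible topology is essential and where I expect the real content of the argument to lie. The rest is bookkeeping with the dictionary between $\nu_I$, the local exponents $v_M$, and the closed sets $V(J)$.
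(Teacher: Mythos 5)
Your argument is correct; parts (a), (b) and the implications (i) $\Leftrightarrow$ (ii), (iii) $\Rightarrow$ (ii) coincide with what the paper does (or leaves as ``immediate''), but on the one substantive implication (ii) $\Rightarrow$ (iii) your route is genuinely different. The paper works with the \emph{complement}: from $V(I)$ clopen it deduces that one of $D(I)$ and $D(I)\cap\mmax$ must be closed in the compact space $\Spec(R)^\cons$, hence that $D(I)$ is compact in the Zariski topology, and then quotes the standard fact that a compact Zariski-open set equals $D(J)$ for some finitely generated $J$; the slightly awkward ``one of the two sets'' step is there precisely to handle the generic point $(0)$, which lies in $D(I)$ but not in $\mmax$. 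You instead stay entirely inside $\mmax$ on the inverse-topology side: the sets $V(J)$ with $J$ finitely generated form a basis of open sets, $V(I)$ is compact (being Zariski-closed, hence closed in the compact $\Spec(R)^\cons$), a finite subcover exists, and the product of the covering ideals is a finitely generated, hence invertible, ideal $J$ with $V(J)=V(I)$, so that $I=\rad(J)$ since $I$ is radical. In effect you re-prove, dually and in this special setting, the quasi-compactness fact the paper invokes on the Zariski side; what your version buys is that it avoids the generic-point bookkeeping and is self-contained, at the modest cost of justifying the basis claim (which does follow from the definition of the inverse topology, since $V(J_1)\cap\cdots\cap V(J_k)=V(J_1+\cdots+J_k)$ keeps the family stable under finite intersections). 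Two trivial points you should patch: in the degenerate case $V(I)=\mmax$ you must choose the basic opens $V(J_\alpha)$ with $J_\alpha\neq(0)$ (always possible, as every maximal ideal contains a nonzero element), so that the resulting product is indeed invertible; and in (a) note that $\nu_R$ has range $\{0\}$, so ``range is $\{0,1\}$'' should be read as ``range contained in $\{0,1\}$'' for proper ideals --- an imprecision already present in the statement itself.
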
 
\begin{proof}
\ref{prop:nuI-radical:rg} and \ref{prop:nuI-radical:car} are immediate.

In \ref{prop:nuI-radical:equiv}, \ref{prop:nuI-radical:equiv:cont} $\iff$ \ref{prop:nuI-radical:equiv:clopen} is immediate from \ref{prop:nuI-radical:car}. If $I=\rad(J)$ with $J$ finitely generated, then $V(I)=V(J)$ and $D(I)=D(J)$ are both closed in the constructible topology of $\Spec(R)$, and thus $V(I)$ and $D(I)\cap\mmax$ are both closed in $\mmax$, so that $V(I)$ is clopen; hence \ref{prop:nuI-radical:equiv:rad} $\Longrightarrow$ \ref{prop:nuI-radical:equiv:clopen}. On the other hand, if $V(I)$ is clopen, then $D(I)\cap\mmax$ is closed in $\mmax$, and thus one of $D(I)$ and $D(I)\cap\mmax$ must be closed in $\Spec(R)$, endowed with the constructible topology. However, both cases imply that $D(I)$ is compact, with respect to the Zariski topology, and thus that $I=\rad(J)$ for some finitely generated (i.e., invertible) ideal $J$. Thus \ref{prop:nuI-radical:equiv:clopen} $\Longrightarrow$ \ref{prop:nuI-radical:equiv:rad} and the claim is proved.
\end{proof}

\begin{defin}
We say that an ideal $I$ is \emph{continuous} if $\nu_I$ is continuous.
\end{defin}

\begin{lemma}\label{lemma:capX}
Let $X\subseteq\mmax$ be clopen, with respect to the inverse topology, and let $I:=\bigcap\{P\mid P\in X\}$.
\begin{enumerate}[(a)]
\item\label{lemma:capX:neq0} If $I\neq(0)$, then $I$ is a radical ideal such that $V(I)=X$.
\item\label{lemma:capX:comp} If $X$ is compact in $\mmax$ then $I\neq(0)$.
\end{enumerate} 
\end{lemma}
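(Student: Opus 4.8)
The plan is to reduce both parts to a single structural fact: a subset of $\mmax$ that is clopen in the inverse topology \emph{and} compact must have the form $V(J)$ for a finitely generated ideal $J$, which (with a little care) can be chosen nonzero. Once this is in hand everything else is essentially formal, because for a one-dimensional domain and a nonzero finitely generated $J$ one has $\bigcap_{P\in V(J)}P=\rad(J)$. The only genuine obstacle is proving this structural fact, i.e.\ correctly identifying an open basis for the inverse topology and feeding it into a compactness argument; the rest is bookkeeping.

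First I would record the needed description of the inverse topology. By definition it is the coarsest topology in which every compact open subset of the Zariski topology is closed, and the compact open Zariski subsets are exactly the $D(J)$ with $J$ finitely generated; hence their complements $V(J)$ form a subbasis of inverse-open sets. Since $V(J_1)\cap\cdots\cap V(J_n)=V(J_1+\cdots+J_n)$, this subbasis is actually a basis, so every inverse-open set is a union of sets $V(J)$ with $J$ finitely generated. Moreover these covering ideals can be taken nonzero: if $V(J)\subseteq X\subsetneq\mmax$ then necessarily $J\neq(0)$, while in the borderline case $X=\mmax$ one may instead cover by the sets $V(aR)$ with $0\neq a\in R$, which are trivially contained in $\mmax$ and still cover it.

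The engine is then as follows. Assuming $X$ clopen and compact, I would cover it by basic inverse-open sets $V(J)\subseteq X$ with $J$ finitely generated and nonzero, and extract a finite subcover $X=V(J_1)\cup\cdots\cup V(J_n)=V(J_1\cdots J_n)$. Setting $J:=J_1\cdots J_n$, this is a nonzero finitely generated ideal with $X=V(J)$. Consequently $I=\bigcap_{P\in X}P=\bigcap_{P\in V(J)}P=\rad(J)$, which is visibly radical, is nonzero because $J\neq(0)$, and satisfies $V(I)=V(\rad J)=V(J)=X$. This already proves part \ref{lemma:capX:comp} and establishes part \ref{lemma:capX:neq0} in the case that $X$ is compact. (The case $X=\mmax$ is consistent with this, since then $I=\Jac(R)$, which is nonzero exactly because compactness of $\mmax$ is equivalent to the Jacobson radical being nonzero.)

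It remains, for part \ref{lemma:capX:neq0}, to derive the compactness of $X$ from the hypothesis $I\neq(0)$. Here I would pick $0\neq d\in I$; then $d\in P$ for every $P\in X$, so $X\subseteq V(dR)$. Since $dR$ is an integral ideal, Proposition \ref{prop:suppnuI} gives $\supp\nu_{dR}=V(dR)$ and that this set is compact; as $X$ is closed in $\mmax$, it is a closed subset of the compact set $V(dR)$ and is therefore compact. This places us in the situation already treated by the engine, completing part \ref{lemma:capX:neq0}. Part \ref{lemma:capX:comp} needs no such reduction, compactness of $X$ being assumed outright.
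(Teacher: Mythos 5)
Your proof is correct, and it takes a genuinely different route from the paper's. Your argument runs both parts through one engine: the sets $V(J)$ with $J$ finitely generated form a basis of inverse-open sets, so an inverse-open compact $X$ is a finite union $V(J_1)\cup\cdots\cup V(J_n)=V(J_1\cdots J_n)$ with $J_1\cdots J_n$ finitely generated and (after your borderline-case adjustment) nonzero, whence $I=\rad(J_1\cdots J_n)\neq(0)$ and $V(I)=X$; this settles part (b) outright, and part (a) follows after the reduction $X\subseteq V(dR)$ for some $0\neq d\in I$, where $V(dR)=\supp\nu_{dR}$ is compact by Proposition \ref{prop:suppnuI} and $X$, being closed, inherits compactness. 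The paper instead treats the two parts separately and by different means: for (a) it observes that the Zariski, inverse and constructible topologies all agree on the Zariski-closed set $V(I)\subseteq\mmax$, so the inverse-closed set $X\subseteq V(I)$ is Zariski-closed, i.e.\ $X=V(J)$ for some radical ideal $J\supseteq I$, and then $J=\bigcap\{P\mid P\in X\}=I$; for (b) it passes to the overring $T:=\bigcap\{R_P\mid P\in X\}$, uses Lemma \ref{lemma:closed-intersect} together with a cited result on spectral spaces to see that $\Max(T)$ is Zariski-closed, hence $\Jac(T)\neq(0)$, and concludes $I=\Jac(T)\cap R\neq(0)$. What your version buys: it is self-contained (no overring machinery and no external citation in part (b)), it unifies the two statements, and it yields extra information, namely that $I$ is the radical of a finitely generated ideal and that in part (a) the set $X$ is automatically compact. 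What the paper's version buys: its proof of (a) needs no compactness at all, and its proof of (b) exercises the intersection-of-localizations technique (Lemma \ref{lemma:closed-intersect}) that the paper reuses throughout. One shared, harmless caveat: both your covering of $\mmax$ by sets $V(aR)$ and the paper's nonvanishing of $\Jac(T)$ tacitly use that maximal ideals are nonzero, i.e., that the rings involved are not fields, which is implicit in the almost Dedekind hypothesis.
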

\begin{proof}
\ref{lemma:capX:neq0} By definition, $I$ is radical, and clearly $X\subseteq V(I)$. Furthermore, since $I\neq(0)$, $V(I)$ is a closed set of the Zariski topology contained in $\mmax$; hence, the restrictions of the Zariski, inverse and constructible topologies all agree on $V(I)$. Hence, $X$ is closed in the Zariski topology, and thus $X=V(J)$ for some radical ideal $J$ such that $I\subseteq J$. But then, $J=\bigcap\{P\mid P\in X\}=I$; hence $X=V(I)$, as claimed.

\ref{lemma:capX:comp} As $X$ is compact with respect to the inverse topology, it is also closed in $\mmax$; by Lemma \ref{lemma:closed-intersect}, the maximal ideals of $R':=\bigcap\{R_M\mid M\in X\}$ are exactly the extensions of the elements of $X$. By \cite[Corollary 4.4.17]{spectralspaces-libro}, $\Max(R')$ is closed in the constructible and thus in the Zariski topology of $\Spec(R')$; hence the Jacobson radical $\Jac(R')$ of $R'$ is nonzero. Thus, $I=\Jac(R')\cap R\neq(0)$, as claimed.
\end{proof}

\begin{prop}\label{prop:nuI-continuous}
Let $R$ be an almost Dedekind domain, and let $I$ be an integral ideal of $R$. Then, the following are equivalent:
\begin{enumerate}[(i)]
\item\label{prop:nuI-continuous:cont} $\nu_I$ is continuous;
\item\label{prop:nuI-continuous:radsubs} $I=J_1\cdots J_k$ for some continuous radical ideals $J_1\subseteq\cdots\subseteq J_k$;
\item\label{prop:nuI-continuous:radg} $I=J_1\cdots J_k$ for some continuous radical ideals $J_1,\ldots,J_k$.
\end{enumerate}
\end{prop}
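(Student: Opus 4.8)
The plan is to close the cycle (ii) $\Rightarrow$ (iii) $\Rightarrow$ (i) $\Rightarrow$ (ii); the first two implications are routine and all the content lies in the third. The implication (ii) $\Rightarrow$ (iii) is immediate, since a chain of continuous radical ideals is in particular a family of continuous radical ideals. For (iii) $\Rightarrow$ (i), if $I=J_1\cdots J_k$ with each $J_i$ continuous, then by Proposition \ref{prop:nuI}(c) we have $\nu_I=\nu_{J_1}+\cdots+\nu_{J_k}$, and a finite sum of continuous maps into the discrete group $\insZ$ is continuous (Section 2 records that $\inscont(\mmax,\insZ)$ is a group under pointwise addition); hence $\nu_I$ is continuous.

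For (i) $\Rightarrow$ (ii), assume $\nu_I$ is continuous. Since $I$ is integral, $\nu_I\geq 0$, and by Proposition \ref{prop:suppnuI}(b) the support $\supp\nu_I$ is compact, so Lemma \ref{lemma:cont->bound} shows $\nu_I$ is bounded; set $n:=\max\nu_I$ and assume $n\geq 1$ (otherwise $\nu_I\equiv 0$, whence $I=R$ by Proposition \ref{prop:nuI}(b) and the statement is trivial). For each $i\in\{1,\ldots,n\}$ I would introduce the level set
\begin{equation*}
X_i:=\{M\in\mmax\mid\nu_I(M)\geq i\}=\nu_I^{-1}(\{i,i+1,\ldots,n\}),
\end{equation*}
which is clopen in $\mmax$ because $\nu_I$ is continuous and $\insZ$ carries the discrete topology, and then define the radical ideal $J_i:=\bigcap\{P\mid P\in X_i\}$. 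Here $X_1=\mmax\setminus Z(\nu_I)$ is exactly the non-zero set of $\nu_I$; being clopen it equals its own closure, so $X_1=\supp\nu_I$ is compact, and since each $X_i\subseteq X_1$ is closed, every $X_i$ is compact as well.

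It then remains to verify the required properties of the $J_i$. By Lemma \ref{lemma:capX}(b), compactness of $X_i$ forces $J_i\neq(0)$, and Lemma \ref{lemma:capX}(a) shows that $J_i$ is a radical ideal with $V(J_i)=X_i$; as $X_i$ is clopen, Proposition \ref{prop:nuI-radical} gives that $\nu_{J_i}$ is continuous and equals $\chi_{X_i}$, so $J_i$ is a continuous radical ideal. The inclusions $X_1\supseteq\cdots\supseteq X_n$ translate, via Proposition \ref{prop:nuI}(a), into the chain $J_1\subseteq\cdots\subseteq J_n$. Finally, for every $M\in\mmax$,
\begin{equation*}
\sum_{i=1}^n\nu_{J_i}(M)=\sum_{i=1}^n\chi_{X_i}(M)=\#\{i\mid\nu_I(M)\geq i\}=\nu_I(M),
\end{equation*}
so $\nu_I=\nu_{J_1\cdots J_n}$ by Proposition \ref{prop:nuI}(c), and hence $I=J_1\cdots J_n$ by Proposition \ref{prop:nuI}(b), which is the factorization demanded by (ii).

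The main obstacle is guaranteeing $J_i\neq(0)$, so that Lemma \ref{lemma:capX}(a) applies and $J_i$ genuinely satisfies $V(J_i)=X_i$; this is exactly where compactness of the support (Proposition \ref{prop:suppnuI}(b)) feeds into Lemma \ref{lemma:capX}(b), and it is the reason the hypothesis that $I$ is integral — giving $\nu_I\geq 0$, bounded, and compactly supported — cannot be dropped.
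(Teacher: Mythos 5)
Your proof is correct and follows essentially the same route as the paper's: both arguments construct the level sets $X_i=\nu_I^{-1}([i,+\infty))$, pass to the radical ideals $J_i=\bigcap\{P\mid P\in X_i\}$ via Lemma \ref{lemma:capX}, and verify the factorization by comparing $\nu_I$ with $\sum_i\chi_{X_i}$. Your treatment is slightly more explicit on two points the paper leaves implicit (compactness of the $X_i$ via $X_1=\supp\nu_I$, and the resulting nonvanishing of the $J_i$), but the underlying argument is the same.
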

\begin{proof}
\ref{prop:nuI-continuous:cont} $\Longrightarrow$ \ref{prop:nuI-continuous:radsubs} By Proposition \ref{prop:suppnuI}, $\nu_I$ has compact support, and thus it is bounded by Lemma \ref{lemma:cont->bound}, say $\nu_I(M)\leq k$ for all $M\in\mmax$.

Let $X_n:=\nu_I^{-1}([n,+\infty))$. Since $\nu_I$ is continuous, each $X_n$ is clopen, with respect to the inverse topology; moreover, $X_n\subseteq V(I)$ for all $n>1$ and $X_n$ is empty for $n>k$. By Lemma \ref{lemma:capX}, the ideals $J_n:=\bigcap\{P\mid P\in X_n\}$ are radical ideals such that $V(J_n)=X_n$, and thus $J_1\subseteq J_2\subseteq\cdots\subseteq J_k$. Moreover, by Proposition \ref{prop:nuI-radical}, the $J_n$ are continuous. We claim that $I=J_1\cdots J_k$. Indeed, let $P\in\mmax$. Then, $J_n\subseteq P$ if and only if $P\in X_n$, if and only if $\nu_I(P)\geq n$; since $J_n$ is radical, this means that $\nu_{J_n}(P)=1$ if $\nu_I(P)\geq n$ while $\nu_{J_n}(P)=0$ otherwise. Thus,
\begin{equation*}
\nu_{J_1\cdots J_k}(P)=\sum_{i=1}^k\nu_{J_i}(P)=n=\nu_I(P).
\end{equation*}
Since $P$ was arbitrary, $\nu_I=\nu_{J_1\cdots J_k}$ and so $I=J_1\cdots J_k$, as claimed.

\ref{prop:nuI-continuous:radsubs} $\Longrightarrow$ \ref{prop:nuI-continuous:radg} is obvious.

\ref{prop:nuI-continuous:radg} $\Longrightarrow$ \ref{prop:nuI-continuous:cont} Since $I=J_1\cdots J_k$, we have $\nu_I=\nu_{J_1}+\cdots+\nu_{J_k}$, and since each $\nu_{J_i}$ is continuous by hypothesis also $\nu_I$ is continuous.
\end{proof}

When $I$ is invertible, the previous proposition holds (with the same proof) if we impose that the $J_i$ are not only continuous, but also invertible. However, we can find two more useful characterizations; to do so we need some preliminaries.
\begin{prop}\label{prop:crit-VI}
Let $R$ be an almost Dedekind domain and let $I$ be an ideal of $R$ that is the radical of a finitely generated ideal. Then, $I$ is finitely generated if and only if $\inscrit(R)\cap V(I)$ is empty.
\end{prop}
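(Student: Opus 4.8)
The strategy is to translate both implications into statements about the map $\nu_I$. Because $I=\rad(J)$ with $J$ finitely generated, $I$ is a radical ideal and, by Proposition~\ref{prop:nuI-radical}, $V(I)$ is clopen in $\mmax$; it is moreover compact for the inverse topology, since by Proposition~\ref{prop:suppnuI} its support $\supp\nu_I=V(I)$ is compact. By Proposition~\ref{prop:nuI-radical}\ref{prop:nuI-radical:car} we have $\nu_I=\chi_{V(I)}$. The proof then rests on the following reduction: by Proposition~\ref{prop:nuI} it suffices to exhibit a \emph{finitely generated} ideal $H$ with $\nu_H=\chi_{V(I)}$, for then $\nu_H=\nu_I$ forces $H=I$, whence $I$ is finitely generated. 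The forward implication is immediate from this viewpoint: if $I$ itself is finitely generated, then it is an invertible radical ideal, so every $M\in V(I)$ contains an invertible radical ideal and is therefore not critical; hence $\inscrit(R)\cap V(I)=\emptyset$.

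For the converse I would argue as follows. Assume $\inscrit(R)\cap V(I)=\emptyset$. By the definition of critical ideal (Section~\ref{sect:critical}), each $M\in V(I)$, failing to be critical, contains an invertible (equivalently, finitely generated) radical ideal $C_M\subseteq M$; thus $\nu_{C_M}=\chi_{V(C_M)}$, with $V(C_M)$ clopen and $M\in V(C_M)$. The family $\{V(C_M)\}_{M\in V(I)}$ is then a cover of the compact set $V(I)$ by clopen (hence open) subsets of $\mmax$, so finitely many of them, say $V(C_{M_1}),\dots,V(C_{M_n})$, already cover $V(I)$.

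It remains to assemble the $C_{M_i}$ and $J$ into a single finitely generated ideal with the correct $\nu$. Set $D:=C_{M_1}\cap\cdots\cap C_{M_n}$. In a Pr\"ufer domain a finite intersection of finitely generated (invertible) ideals is again finitely generated \cite{gilmer}, so $D$ is finitely generated; moreover $D$ is radical, since locally $\nu_D=\max_i\nu_{C_{M_i}}$ takes values in $\{0,1\}$, and $V(D)=\bigcup_i V(C_{M_i})\supseteq V(I)$. Now put $H:=D+J$, which is finitely generated. Computing in each localization $R_M$, where $HR_M=DR_M+JR_M$, gives $\nu_H=\min\{\nu_D,\nu_J\}$: for $M\in V(I)$ one has $\nu_D(M)=1$ and $\nu_J(M)\geq 1$, so $\nu_H(M)=1$, while for $M\notin V(I)=V(J)$ one has $\nu_J(M)=0$, so $\nu_H(M)=0$. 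Hence $\nu_H=\chi_{V(I)}=\nu_I$, and therefore $H=I$ is finitely generated.

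The substantive step—the one I expect to require the most care—is this final assembly: non-criticality yields, for each point of $V(I)$, only a finitely generated radical ideal whose vanishing set may spill outside $V(I)$ and overlap the others, so a naive product would produce a $\nu$ supported on a set strictly larger than $V(I)$ and possibly exceeding $1$ there. The two operations are precisely what corrects this: intersecting the $C_{M_i}$ caps the values at $1$ (making $D$ radical), and adding $J$ trims the support back down to exactly $V(I)$. The only external ingredient is the standard fact that the finitely generated ideals of a Pr\"ufer domain are closed under intersection.
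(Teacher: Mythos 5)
Your proof is correct and follows essentially the same route as the paper: non-criticality supplies finitely generated radical ideals through each point of $V(I)$, compactness of $V(I)$ in the inverse topology yields a finite subcover, and the ideal $(C_{M_1}\cap\cdots\cap C_{M_n})+J$ is exactly the paper's $J_0+(J_1\cap\cdots\cap J_k)$, forced to equal $I$. The only (cosmetic) difference is that you verify the final identity by computing $\nu_H$ pointwise rather than by comparing radicals and $V$-sets directly --- and note your local formulas $\nu_{D+J}=\min\{\nu_D,\nu_J\}$, $\nu_{\cap_i C_{M_i}}=\max_i \nu_{C_{M_i}}$ are the correct ones, whereas items (d) and (e) of Proposition~\ref{prop:nuI} as printed have $\sup$ and $\inf$ interchanged.
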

\begin{proof}
If $I$ is finitely generated and $M\in V(I)$, then $M$ is not critical because $I$ itself does not satisfy the condition defining a critical ideal. Conversely, suppose that $\inscrit(R)\cap V(I)=\emptyset$: therefore, for every $M\in V(I)$ we can find a radical finitely generated ideal $J_M\subseteq M$. In the inverse topology, $\{V(J_M)\}_{M\in V(I)}$ is an open cover of the compact space $V(I)$, and thus there is a finite subcover $\{V(J_1),\ldots,V(J_k)\}$. Let $J_0$ be a finitely generated ideal such that $\rad(J_0)=I$, and let $J:=J_0+(J_1\cap\cdots\cap J_k)$: then, $J$ is radical since $J_1\cap\cdots\cap J_k$ is radical, and it is finitely generated since the $J_i$ are finitely generated and $R$ is Pr\"ufer. Moreover,  $V(J)=V(J_0)\cap(V(J_1)\cup\cdots\cup V(J_k))=V(J_0)=V(I)$ since every $M\in V(I)$ is in some $V(J_i)$. Therefore, $J$ must be equal to $I$, which thus is finitely generated.
\end{proof}

\begin{lemma}\label{lemma:radical-invt-sup}
Let $R$ be an almost Dedekind domain, and let $I\subseteq J$ be radical ideals. If $I$ is invertible and $D(J)$ is compact, with respect to the Zariski topology, then $J$ is invertible.
\end{lemma}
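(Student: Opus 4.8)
The plan is to reduce everything to Proposition \ref{prop:crit-VI}, which characterizes when an ideal that is the radical of a finitely generated ideal is itself finitely generated (equivalently invertible, since in the Pr\"ufer domain $R$ an ideal is invertible if and only if it is finitely generated). Concretely, I would show that $J$ is the radical of a finitely generated ideal and that $V(J)$ avoids the critical ideals, and then conclude invertibility directly from that proposition.

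First I would translate the hypothesis on $J$. The assumption that $D(J)$ is compact in the Zariski topology yields, by the fact recorded in the preliminaries on the Zariski topology, a finitely generated ideal $J_0$ with $D(J_0)=D(J)$, hence $V(J_0)=V(J)$ and $\rad(J_0)=\rad(J)=J$ (the last equality because $J$ is radical). Thus $J$ is the radical of a finitely generated ideal, which places it squarely within the scope of Proposition \ref{prop:crit-VI}.

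Next I would verify that $\inscrit(R)\cap V(J)=\emptyset$. Since $I\subseteq J$, we have $V(J)\subseteq V(I)$, so it suffices to see that $V(I)$ contains no critical ideal. But $I$ is an invertible radical ideal, and for every $M\in V(I)$ the ideal $I\subseteq M$ is then an invertible radical ideal contained in $M$; by the characterization recalled in Section \ref{sect:critical} (an ideal is \emph{not} critical exactly when it contains an invertible radical ideal), each such $M$ is non-critical. Hence $\inscrit(R)\cap V(I)=\emptyset$, and a fortiori $\inscrit(R)\cap V(J)=\emptyset$.

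Finally I would apply Proposition \ref{prop:crit-VI} to $J$: it is the radical of a finitely generated ideal and $\inscrit(R)\cap V(J)$ is empty, so $J$ is finitely generated, and therefore invertible. I do not anticipate a genuine obstacle here: the argument is a direct assembly of earlier results, and the only points requiring care are the two standard translations, namely that invertibility coincides with finite generation for ideals of $R$, and that compactness of $D(J)$ is exactly the statement that $J$ is the radical of a finitely generated ideal.
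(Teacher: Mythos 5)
Your proof is correct, but it takes a genuinely different route from the paper's. The paper argues directly: compactness of $D(J)$ gives a finitely generated $J'$ with $V(J')=V(J)$, and then one checks that $J=I+J'$ (this is verified locally: at each $M\in V(J)$ both $I$ and $J$, being radical ideals contained in $M$, localize to $MR_M$, while away from $V(J)=V(J')$ both sides localize to $R_M$), so $J$ is finitely generated, hence invertible since $R$ is Pr\"ufer. You instead route through the critical-ideal machinery: $I$ is an invertible radical ideal contained in every $M\in V(I)\supseteq V(J)$, so no such $M$ is critical, and Proposition \ref{prop:crit-VI} then upgrades ``radical of a finitely generated ideal'' to ``finitely generated.'' There is no circularity, since Proposition \ref{prop:crit-VI} precedes the lemma and does not use it. The trade-off: the paper's argument is more elementary and self-contained, needing only the sum $I+J'$ and the Pr\"ufer property, whereas yours invokes Proposition \ref{prop:crit-VI}, whose own proof is a compactness-and-covering argument; in exchange, your approach avoids having to justify the ideal equality $J=I+J'$ (a step the paper leaves implicit) and makes transparent exactly where the hypothesis ``$I$ invertible and radical'' enters, namely in killing the critical ideals on $V(J)$.
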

\begin{proof}
Since $D(J)$ is compact, there is a finitely generated ideal $J'$ such that $D(J')=D(J)$, and thus $V(J)=V(J')$. Then, $J=I+J'$ is finitely generated, and since $R$ is a Pr\"ufer domain then $J$ must also be invertible.
\end{proof}

\begin{prop}\label{prop:nuI-continuous-invt}
Let $R$ be an almost Dedekind domain, and let $I$ be an integral invertible ideal of $R$. Then, the following are equivalent:
\begin{enumerate}[(i)]
\item\label{prop:nuI-continuous-invt:cont} $\nu_I$ is continuous;
\item\label{prop:nuI-continuous-invt:radsubs} $I=J_1\cdots J_k$ for some continuous invertible radical ideals $J_1\subseteq\cdots\subseteq J_k$;
\item\label{prop:nuI-continuous-invt:radg} $I=J_1\cdots J_k$ for some continuous invertible radical ideals $J_1,\ldots,J_k$;
\item\label{prop:nuI-continuous-invt:radI} $\rad(I)$ is invertible;
\item\label{prop:nuI-continuous-invt:crit} $V(I)\cap\inscrit(R)=\emptyset$.
\end{enumerate}
\end{prop}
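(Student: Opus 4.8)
The plan is to run the implications (i)$\Rightarrow$(ii)$\Rightarrow$(iii)$\Rightarrow$(i) among the factorization conditions, to read off (ii)$\Rightarrow$(iv) directly, to obtain (iv)$\Leftrightarrow$(v) from Proposition \ref{prop:crit-VI}, and finally to close the circle with the substantial implication (iv)$\Rightarrow$(ii).

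For the first three I would invoke the invertible version of Proposition \ref{prop:nuI-continuous} announced just before the statement: if $\nu_I$ is continuous it yields $I=J_1\cdots J_k$ with $J_1\subseteq\cdots\subseteq J_k$ continuous radical ideals. Since $I$ is invertible and a product of integral ideals is invertible only if each factor is, every $J_i$ is automatically invertible, giving (ii); the implications (ii)$\Rightarrow$(iii) and (iii)$\Rightarrow$(i) are immediate, as $\nu_I=\sum_i\nu_{J_i}$ is a finite sum of continuous functions. From (ii) I would deduce (iv) by noting that $\rad(I)=\rad(J_1\cdots J_k)=\bigcap_i\rad(J_i)=\bigcap_i J_i=J_1$, which is invertible. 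Finally (iv)$\Leftrightarrow$(v) is Proposition \ref{prop:crit-VI} applied to $\rad(I)$ (the radical of the finitely generated ideal $I$), using $V(\rad(I))=V(I)$ and the fact that in a Pr\"ufer domain finitely generated and invertible coincide.

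The heart of the argument is (iv)$\Rightarrow$(ii). I would first show $\nu_I$ is bounded. Since $\rad(I)$ is invertible it is finitely generated, so the nilradical $\rad(I)/I$ of the zero-dimensional ring $R/I$ is finitely generated; being generated by finitely many nilpotents it is nilpotent, whence $\rad(I)^k\subseteq I$ for some $k$, and $\nu_I$ is bounded by Proposition \ref{prop:nuI}. Then I would peel off radical layers: set $J_1:=\rad(I)$ and inductively $I_n:=I(J_1\cdots J_n)^{-1}$, $J_{n+1}:=\rad(I_n)$. A valuation computation gives $\nu_{I_n}=\max(\nu_I-n,0)$, so each $I_n$ is an integral invertible ideal with $V(I_n)=X_{n+1}$, where $X_m:=\{M\in\mmax\mid\nu_I(M)\geq m\}\subseteq V(I)$. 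Because $X_{n+1}\cap\inscrit(R)\subseteq V(I)\cap\inscrit(R)=\emptyset$ (condition (v), available since (iv)$\Leftrightarrow$(v)), Proposition \ref{prop:crit-VI} guarantees that each $J_{n+1}=\rad(I_n)$ is again finitely generated, hence invertible, hence continuous by Proposition \ref{prop:nuI-radical}. Boundedness forces $I_k=R$, and since $\nu_{J_n}=\chi_{X_n}$ one checks $\sum_{n=1}^{k}\nu_{J_n}(M)=\nu_I(M)$ for all $M$, so $I=J_1\cdots J_k$ with $J_1\subseteq\cdots\subseteq J_k$ an ascending chain of continuous invertible radical ideals, that is, (ii).

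The main obstacle I anticipate is exactly the finiteness of this peeling process: without a uniform bound on $\nu_I$ the construction would never terminate, and it is not obvious a priori that an invertible integral ideal has bounded $\nu_I$. The decisive point is that hypothesis (iv) upgrades ``$\nu_{\rad(I)}$ continuous'' to ``$\rad(I)$ finitely generated,'' which through the zero-dimensional nilpotency argument yields $\rad(I)^k\subseteq I$ and hence boundedness; the secondary point to watch is that invertibility of the successive radicals $\rad(I_n)$ must be re-justified at each stage, which works only because the supports $X_n$ shrink inside $V(I)$ and therefore remain disjoint from $\inscrit(R)$.
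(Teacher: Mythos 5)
Your proposal is correct, but the route through the hard implication is genuinely different from the paper's. The shared ingredients are the equivalence of (i), (ii), (iii) via the invertible form of Proposition~\ref{prop:nuI-continuous}, and the use of Proposition~\ref{prop:crit-VI}; but where the paper closes the cycle as (ii)$\Rightarrow$(v)$\Rightarrow$(iv)$\Rightarrow$(i), proving (iv)$\Rightarrow$(i) by contradiction --- taking the minimal $s$ such that $X_s=\nu_I^{-1}([s,+\infty))$ is not clopen, propagating invertibility from $\rad(I)$ to the ideals $J_1,\ldots,J_{s-1}$ via Lemma~\ref{lemma:radical-invt-sup}, and contradicting the clopenness of $V(I')$ for $I':=IJ_1^{-1}\cdots J_{s-1}^{-1}$ --- you prove (iv)$\Rightarrow$(ii) directly by a terminating peeling construction. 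Two of your ingredients are absent from the paper's proof. First, the boundedness step: from (iv) you observe that $\rad(I)/I$ is a finitely generated nil ideal of $R/I$, hence nilpotent, so $\rad(I)^k\subseteq I$ and $\nu_I$ is bounded by Proposition~\ref{prop:nuI}; the paper's minimal-counterexample scheme never needs boundedness, whereas your induction cannot terminate without it, and the nilpotency argument is exactly the right way to extract it from (iv). Second, at each stage you re-justify invertibility of $J_{n+1}=\rad(I_n)$ by Proposition~\ref{prop:crit-VI}, using that $V(I_n)=X_{n+1}\subseteq V(I)$ remains disjoint from $\inscrit(R)$; this requires having (iv)$\Leftrightarrow$(v) in hand beforehand, which you legitimately establish first, while the paper uses Lemma~\ref{lemma:radical-invt-sup} for this propagation and never invokes condition (v) inside the hard implication. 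In exchange, the paper's argument is shorter; yours is constructive, produces the ascending factorization $I=J_1\cdots J_k$ explicitly together with the quantitative by-product $\rad(I)^k\subseteq I$, and makes Lemma~\ref{lemma:radical-invt-sup} dispensable. Your direct derivation (ii)$\Rightarrow$(iv) via $\rad(I)=\bigcap_i J_i=J_1$ is also a small simplification of the paper's detour through (v). The only degenerate point, equally present in the paper's own arguments and harmless, is that trailing factors equal to $R$ may appear when the chosen bound $k$ exceeds $\max\nu_I$.
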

\begin{proof}
The equivalence of \ref{prop:nuI-continuous-invt:cont}, \ref{prop:nuI-continuous-invt:radsubs} and \ref{prop:nuI-continuous-invt:radg} follows from Proposition \ref{prop:nuI-continuous} and the fact that, when $I=J_1\cdots J_k$, the ideal $I$ is invertible if and only if each $J_i$ is invertible.

%\ref{prop:nuI-continuous-invt:radsubs} $\Longrightarrow$ \ref{prop:nuI-continuous-invt:radI} If $I=J_1\cdots J_k$ with $J_1\subseteq\cdots\subseteq J_k$, then $\rad(I)=J_1$, and thus it is invertible.

\ref{prop:nuI-continuous-invt:radsubs} $\Longrightarrow$ \ref{prop:nuI-continuous-invt:crit} By Proposition \ref{prop:crit-VI}, each $V(J_i)$ is disjoint from $\inscrit(R)$. Hence, also $V(I)=V(J_1\cdots J_k)=V(J_1)\cup\cdots\cup V(J_k)$ is disjoint from $\inscrit(R)$.

\ref{prop:nuI-continuous-invt:crit} $\Longrightarrow$ \ref{prop:nuI-continuous-invt:radI} We have $V(\rad(I))\cap\inscrit(R)=V(I)\cap\inscrit(R)=\emptyset$. By Proposition \ref{prop:crit-VI}, $\rad(I)$ is finitely generated.

\ref{prop:nuI-continuous-invt:radI} $\Longrightarrow$ \ref{prop:nuI-continuous-invt:cont} Suppose that $\nu_I$ is not continuous, and consider the spaces $X_t:=\nu_I^{-1}([t,+\infty))$. If they are all clopen, then also each $f^{-1}(t)=X_t\cap(\mmax\setminus X_{t+1})$ is open, and thus $\nu_I$ would be continuous, a contradiction. Let thus $s$ be the minimal index such that $X_s$ is not clopen in $\mmax$: then, $s>0$ since $X_0=\mmax$. Moreover, $X_1=\mmax\setminus X_0=\supp\nu_I=V(I)$ is clopen (as $I$ is invertible) and thus $s>1$.

For $n<s$, since $X_n$ is clopen the ideal $J_n:=\bigcap\{P\mid P\in X_n\}$ is a continuous radical ideal containing $J_1=\rad(I)$, and since by hypothesis $\rad(I)$ is invertible then also $J_1,\ldots,J_{s-1}$ are invertible by Lemma \ref{lemma:radical-invt-sup}. Let $I':=IJ_1^{-1}\cdots J_{s-1}^{-1}$: then, $I'$ is an invertible integral ideal of $R$ such that $\nu_{I'}^{-1}([t,+\infty))=\nu_I^{-1}([t+s-1,+\infty))=X_{t+s-1}$ for all $t\geq 0$. In particular, $V(I')=\nu_{I'}^{-1}([1,+\infty))=\nu_I^{-1}([s,+\infty))=X_s$. However, $V(I')$ is clopen since $I'$ is invertible, while $X_s$ is not by the definition of $s$. This is a contradiction, and thus $\nu_I$ must be continuous.
\end{proof}

In particular, if we apply Proposition \ref{prop:nuI-continuous-invt} to every invertible ideal, we get back some of the characterizations of SP-domains given in Theorem \ref{teor:caratt-SP}.
\begin{comment}
As a corollary, we get back a characterization of SP-domains (see \cite[Theorem 2.1]{olberding-factoring-SP}).
\begin{cor}\label{cor:SP}
Let $R$ be an almost Dedekind domain. Then, the following are equivalent:
\begin{enumerate}[(i)]
\item the radical of every invertible ideal of $R$ is invertible;
\item every integral invertible ideal is a product of radical ideals.
\end{enumerate}
\end{cor}
\begin{proof}
It is enough to apply Proposition \ref{prop:nuI-continuous-invt} to every invertible ideal.
\end{proof}
\end{comment}

\section{Function spaces}
There are two ways in which the correspondence between continuous valuation functions $\nu_I$ and invertible ideals can fail: continuous radical ideals may not be invertible, and invertible ideals may not give rise to continuous function. To study how much these properties fail, we need to work in the context of groups of functions.

Since every $\nu_I$ is a function of compact support from $\mmax$ to $\insZ$ (Proposition \ref{prop:suppnuI}), we have a map
\begin{equation}\label{eq:defpsi}
\begin{aligned}
\Psi\colon\insfracid(R) & \longrightarrow\insfunct_c(\mmax,\insZ),\\
\nu & \longmapsto\nu_I,
\end{aligned}
\end{equation}
which is injective by Proposition \ref{prop:nuI}. We also denote by $\Psi$ the restriction of this map to $\Inv(R)$. In this last case, $\Inv(R)$ carries also a group structure; since $\nu_{IJ}=\nu_I+\nu_J$, we have $\Psi(IJ)=\Psi(I)+\Psi(J)$, that is, $\Psi$ is an injective group homomorphism from $\Inv(R)$ to $\mathcal{F}_c(\mmax,\insZ)$. 

In particular, $\Psi$ sends the set $\insid(R)$ of integral ideals inside the set $\mathcal{F}_c(\mmax,\insN)$, and conversely: that is, if $\Psi(I)=\nu_I\in\mathcal{F}_c(\mmax,\insN)$ then $I$ must be an integral ideal.

\begin{comment}For group-theoretic reasons, we immediately have the following result.
\begin{prop}
Let $R$ be an almost Dedekind domain such that $R$ and $\mmax$ are countable. Then, $\Inv(R)$ is free.
\end{prop}
\begin{proof}
An invertible ideal is finitely generated, and thus the cardinality of $\Inv(R)$ is at most the cardinality of the set of finite subsets of $R$. Since $R$ is countable, it follows that $\Inv(R)$ is countable.

Since $\mmax$ is countable, $\mathcal{F}(\mmax,\insZ)$ is isomorphic to the Baer-Specker group $\Pi$, the product of countable many copies of $\insZ$. By \cite{specker} (see also \cite[Chapter 3, Theorem 8.2]{fuchs-abeliangroups}), every countable subgroup of $\Pi$ is free; in particular, $\Inv(R)$ is free.
\end{proof}
\end{comment}

We now want to consider the set of continuous functions in the form $\nu_I$.
%What happens if we want to consider only continuous functions? %Denote by $\funcontcomp(\mmax,\insZ)$ the set of all functions that are continuous and with compact support. 
\begin{prop}\label{prop:funcontcomp}
Let $R$ be an almost Dedekind domain. Then:
\begin{enumerate}[(a)]
\item $\funcontcomp(\mmax,\insZ)$ is generated by the $\nu_I$, as $I$ ranges among the continuous radical ideals;
\item $\funcontcomp(\mmax,\insN)\subseteq\Psi(\insid(R))$.
\end{enumerate}
\end{prop}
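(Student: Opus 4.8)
The plan is to make the generators completely explicit and then to slice an arbitrary compactly supported function along its level sets. The observation underlying both parts is a dictionary between continuous radical ideals and compact clopen subsets of $\mmax$. If $I$ is a continuous radical ideal, then $\nu_I=\chi_{V(I)}$ by Proposition \ref{prop:nuI-radical}\ref{prop:nuI-radical:car}, the set $V(I)$ is clopen by Proposition \ref{prop:nuI-radical}\ref{prop:nuI-radical:equiv}, and $V(I)=\supp\nu_I$ is compact by Proposition \ref{prop:suppnuI}\ref{prop:suppnuI:integ},\ref{prop:suppnuI:compact}; hence each such $\nu_I$ already lies in $\funcontcomp(\mmax,\insN)$, so that the subgroup it generates sits inside the group we are describing. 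Conversely, for any compact clopen $X\subseteq\mmax$, Lemma \ref{lemma:capX}\ref{lemma:capX:comp} guarantees $I_X:=\bigcap\{P\mid P\in X\}\neq(0)$, whence by Lemma \ref{lemma:capX}\ref{lemma:capX:neq0} it is a radical ideal with $V(I_X)=X$; it is continuous because $X$ is clopen, and $\nu_{I_X}=\chi_X$.

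For part (a), I would take $f\in\funcontcomp(\mmax,\insZ)$, which is bounded by Lemma \ref{lemma:cont->bound}, say $|f|\le k$. For $1\le n\le k$ put $X_n:=f^{-1}([n,+\infty))$ and $Y_n:=f^{-1}((-\infty,-n])$; these are clopen since $\insZ$ is discrete and $f$ continuous, and each is a closed subset of $\mmax$ contained in $\supp f$, hence compact. A pointwise count gives the telescoping identity
\begin{equation*}
f=\sum_{n=1}^k\bigl(\chi_{X_n}-\chi_{Y_n}\bigr),
\end{equation*}
and by the dictionary each $\chi_{X_n}$ and each $\chi_{Y_n}$ is of the form $\nu_I$ for a continuous radical ideal $I$. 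Thus $f$ lies in the subgroup generated by these $\nu_I$, which proves (a).

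For part (b), suppose moreover that $f$ takes values in $\insN$. Then every $Y_n$ is empty and $X_1\supseteq\cdots\supseteq X_k$, so the associated radical ideals satisfy $I_1\subseteq\cdots\subseteq I_k$. Setting $I:=I_1\cdots I_k$, which is an integral ideal of $R$, Proposition \ref{prop:nuI}(c) gives $\nu_I=\sum_{n=1}^k\nu_{I_n}=\sum_{n=1}^k\chi_{X_n}=f$, so $f=\Psi(I)\in\Psi(\insid(R))$.

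The only delicate point is the compactness of the level sets $X_n$ and $Y_n$: this is exactly what licenses the appeal to Lemma \ref{lemma:capX}\ref{lemma:capX:comp}, keeping each $I_X$ nonzero so that $V(I_X)=X$ genuinely holds rather than $I_X$ collapsing to $(0)$. Everything else is the bookkeeping of the telescoping identity together with the additivity $\nu_{IJ}=\nu_I+\nu_J$; indeed, part (b) is essentially the non-negative case of the decomposition used in part (a), read back through the correspondence $\Psi$.
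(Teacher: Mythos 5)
Your proof is correct and follows essentially the same route as the paper: both slice a compactly supported continuous function into characteristic functions of compact clopen subsets of $\mmax$ and convert these into $\nu_J$'s for continuous radical ideals $J$ via Lemma \ref{lemma:capX}. The only difference is cosmetic: you slice along superlevel sets $f^{-1}([n,+\infty))$ and $f^{-1}((-\infty,-n])$ with coefficients $\pm 1$ (which in part (b) yields a nested chain of radical ideals), whereas the paper writes $f=a_1\chi_{X_1}+\cdots+a_n\chi_{X_n}$ with integer coefficients and takes $I=J_1^{a_1}\cdots J_n^{a_n}$ in part (b).
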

\begin{proof}
Let $f\in\funcontcomp(\mmax,\insZ)$. Since $\insZ$ has the discrete topology, we can write $f=a_1\chi_{X_1}+\cdots+a_n\chi_{X_n}$, where each $X_i$ is a clopen subset of $R$ contained in $\supp f$ and each $a_i\inZ$; in particular, each $X_i$ (being a closed subset of a compact space) is itself compact, with respect to the inverse topology. By Lemma \ref{lemma:capX}, $J_i:=\bigcap\{P\mid P\in X_i\}$ is a radical ideal with $V(J_i)=X_i$: thus, $f=a_1\nu_{J_1}+\cdots+a_n\nu_{J_n}$ lies in the group generated by the $\nu_I$.

If now $f\in\funcontcomp(\mmax,\insN)$, then again we can write $f=a_1\chi_{X_1}+\cdots+a_n\chi_{X_n}$, with the $a_i\geq 0$: with the same construction we can write $f=\nu_I$ with $I:=J_1^{a_1}\cdots J_n^{a_n}$.
\end{proof}

An immediate corollary is the extension of \cite[Theorem 5.1]{HK-Olb-Re} to SP-domains with zero Jacobson radical.
\begin{cor}\label{cor:SP-inv}
Let $R$ be an SP-domain. Then, $\Inv(R)\simeq\funcontcomp(\mmax,\insZ)$.
\end{cor}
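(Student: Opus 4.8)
The plan is to show that the injective group homomorphism $\Psi\colon\Inv(R)\longrightarrow\funcontcomp(\mmax,\insZ)$ from \eqref{eq:defpsi} is also surjective, since then it is an isomorphism of groups; the $\ell$-group structure is preserved automatically because $\nu_{I+J}=\sup\{\nu_I,\nu_J\}$ and $\nu_{I\cap J}=\inf\{\nu_I,\nu_J\}$ by Proposition \ref{prop:nuI}. Injectivity is already recorded in the discussion following \eqref{eq:defpsi}, and every $\nu_I$ lands in $\funcontcomp(\mmax,\insZ)$: it has compact support by Proposition \ref{prop:suppnuI}\ref{prop:suppnuI:compact}, and it is continuous by Proposition \ref{prop:nuI-continuous-invt} because, $R$ being an SP-domain, it has no critical ideals (Theorem \ref{teor:caratt-SP}\ref{teor:caratt-SP:critical}), so the condition $V(I)\cap\inscrit(R)=\emptyset$ holds vacuously for every invertible $I$. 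Thus the only thing to verify is that $\Psi$ hits every element of $\funcontcomp(\mmax,\insZ)$.

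For surjectivity I would argue as follows. Take any $f\in\funcontcomp(\mmax,\insZ)$. By Proposition \ref{prop:funcontcomp}(a), $f$ lies in the group generated by the functions $\nu_J$, where $J$ runs over the continuous radical ideals; that is, $f=\sum_i a_i\nu_{J_i}$ with $a_i\in\insZ$ and each $J_i$ a continuous radical ideal. The key point is that in an SP-domain every continuous radical ideal is invertible: indeed, a continuous radical ideal $J$ satisfies $V(J)\cap\inscrit(R)=\emptyset$ trivially, and by the implication \ref{prop:nuI-radical:equiv:cont} $\Longrightarrow$ \ref{prop:nuI-radical:equiv:rad} of Proposition \ref{prop:nuI-radical} it equals $\rad(J')$ for some invertible ideal $J'$, whose radical is invertible because $R$ is an SP-domain (Theorem \ref{teor:caratt-SP}, using that the radical of a finitely generated ideal is finitely generated). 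Hence each $\nu_{J_i}=\Psi(J_i)$ with $J_i\in\Inv(R)$, and since $\Psi$ is a homomorphism, $f=\Psi\bigl(\prod_i J_i^{a_i}\bigr)$ lies in the image of $\Psi$.

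Combining injectivity, the fact that the image lands in $\funcontcomp(\mmax,\insZ)$, and the surjectivity just established, $\Psi$ is a group isomorphism $\Inv(R)\xrightarrow{\ \sim\ }\funcontcomp(\mmax,\insZ)$, as claimed.

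The step I expect to be the crux is the identification ``continuous radical ideal $\Longrightarrow$ invertible'' in the SP case: everything else is a direct assembly of the earlier propositions, but this is where the SP hypothesis does the real work, ensuring that the generators produced by Proposition \ref{prop:funcontcomp} genuinely come from invertible ideals rather than merely continuous ones. One should take care that this is exactly the first of the two failure modes identified at the start of the section (continuous radical ideals that are not invertible), which the SP hypothesis rules out.
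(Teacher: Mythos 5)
Your proposal is correct and follows essentially the same route as the paper: both reduce the statement to the facts that $\funcontcomp(\mmax,\insZ)$ is generated by the $\nu_J$ for $J$ a continuous radical ideal (Proposition \ref{prop:funcontcomp}), that such $J$ are invertible under the SP hypothesis (Proposition \ref{prop:nuI-radical} plus Theorem \ref{teor:caratt-SP}), and that every invertible ideal is continuous (Proposition \ref{prop:nuI-continuous-invt}), so that the injective homomorphism $\Psi$ has image exactly $\funcontcomp(\mmax,\insZ)$. Your identification of the crux --- continuous radical ideals being invertible in the SP case --- matches the paper's key step precisely.
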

\begin{proof}
Since $R$ is an SP-domain, every continuous radical ideal is invertible (Proposition \ref{prop:nuI-radical}\ref{prop:nuI-radical:equiv} and Theorem \ref{teor:caratt-SP}). By Proposition \ref{prop:funcontcomp}, $\funcontcomp(\mmax,\insZ)$ is generated by the set of $\nu_I$, as $I$ ranges among the continuous radical ideals; by Proposition \ref{prop:nuI-continuous-invt}, these ideals generate $\Inv(R)$, and thus $\Psi(\Inv(R))$ generates $\funcontcomp(\mmax,\insZ)$. Since $\Psi$ is an injective group homomorphism when restricted to $\Inv(R)$, it follows that its image is the whole $\funcontcomp(\mmax,\insZ)$ and thus $\Inv(R)$ and $\funcontcomp(\mmax,\insZ)$ are isomorphic, as claimed.
%Since $R$ is an SP-domain, every integral invertible ideal is a product of continuous radical ideals (Proposition \ref{prop:nuI-continuous-invt}); hence, $\Psi(\Inv(R))$ generates the whole of $\funcontcomp(\mmax,\insZ)$. Since $\Psi$ is an injective group homomorphism when restricted to $\Inv(R)$, it follows that its image is the whole $\funcontcomp(\mmax,\insZ)$ and thus $\Inv(R)$ and $\funcontcomp(\mmax,\insZ)$ are isomorphic, as claimed.
\end{proof}

Note that $\funcontcomp(\mmax,\insZ)$, in general, does not lie inside $\Psi(\insfracid(R))$: indeed, suppose that $J$ is radical with $V(J)$ compact. Then, $\nu_J$ is continuous, and thus so is $-\nu_J$; however, $-\nu_J=\nu_L$ for some $L$ if and only if $JL=R$, that is, $-\nu_J\in\Psi(\insfracid(R))$ if and only if $J$ is invertible, which in general needs not to happen. Nevertheless, $\funcontcomp(\mmax,\insN)$ generates $\funcontcomp(\mmax,\insZ)$ as a group, and thus the group generated by $\Psi(\insid(R))$ contains $\funcontcomp(\mmax,\insZ)$.

To explore the relationship between $\Psi(\Inv(R))$ and $\funcontcomp(\mmax,\insZ)$, we will consider three groups:
\begin{itemize}
\item the first one is $\sigcap(R):=\Psi(\Inv(R))\cap\funcontcomp(\mmax,\insZ)$, the group of continuous invertible ideals;
\item the second one is the quotient $\displaystyle{\sgmr(R):=\frac{\funcontcomp(\mmax,\insZ)}{\sigcap(R)}}$, that measures how many continuous functions do not come from invertible ideals;
\item the third one is the quotient $\displaystyle{\sgmi(R):=\frac{\Psi(\Inv(R))}{\sigcap(R)}}$, that measures how many invertible ideals are not continuous.
\end{itemize}

We shall study the first two of these groups in the remainder of this section, and the last one in the following Section \ref{sect:InvR}. Before doing so, we reinterpret the characterizations of SP-domains through these groups.
\begin{prop}\label{prop:SP-gruppi}
Let $R$ be an almost Dedekind domain. The following are equivalent:
\begin{enumerate}[(i)]
\item\label{prop:SP-gruppi:SP} $R$ is an SP-domain;
\item\label{prop:SP-gruppi:cap0} $\funcontcomp(\mmax,\insZ)=\Psi(\Inv(R))$;
\item\label{prop:SP-gruppi:cap1} $\sigcap(R)=\funcontcomp(\mmax,\insZ)$;
\item\label{prop:SP-gruppi:cap2} $\sigcap(R)=\Psi(\Inv(R))$;
\item\label{prop:SP-gruppi:r} $\sgmr(R)=(0)$;
\item\label{prop:SP-gruppi:i} $\sgmi(R)=(0)$.
\end{enumerate}
\end{prop}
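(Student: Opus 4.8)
The plan is to separate the purely formal equivalences, which follow from the definitions of the three groups, from the two substantive implications that link everything back to condition \ref{prop:SP-gruppi:SP}. Since $\sigcap(R)=\Psi(\Inv(R))\cap\funcontcomp(\mmax,\insZ)$ is contained in each of the two groups, I would first record that \ref{prop:SP-gruppi:cap1} is equivalent to the inclusion $\funcontcomp(\mmax,\insZ)\subseteq\Psi(\Inv(R))$, that \ref{prop:SP-gruppi:cap2} is equivalent to $\Psi(\Inv(R))\subseteq\funcontcomp(\mmax,\insZ)$, and that \ref{prop:SP-gruppi:cap0} is exactly the conjunction of these two inclusions. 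The quotient conditions are then immediate from the definitions: $\sgmr(R)=\funcontcomp(\mmax,\insZ)/\sigcap(R)$ vanishes precisely when $\sigcap(R)=\funcontcomp(\mmax,\insZ)$, so \ref{prop:SP-gruppi:r} $\iff$ \ref{prop:SP-gruppi:cap1}, and likewise \ref{prop:SP-gruppi:i} $\iff$ \ref{prop:SP-gruppi:cap2}. Thus it suffices to prove that \ref{prop:SP-gruppi:SP}, \ref{prop:SP-gruppi:cap1} and \ref{prop:SP-gruppi:cap2} are equivalent, since \ref{prop:SP-gruppi:cap0} follows as the conjunction of the last two.

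For \ref{prop:SP-gruppi:SP} $\Rightarrow$ \ref{prop:SP-gruppi:cap0}, I would simply invoke Corollary \ref{cor:SP-inv}: when $R$ is an SP-domain, the map $\Psi$ identifies $\Inv(R)$ with all of $\funcontcomp(\mmax,\insZ)$, that is $\Psi(\Inv(R))=\funcontcomp(\mmax,\insZ)$, which is \ref{prop:SP-gruppi:cap0} and hence yields both inclusions. The genuine content lies in the two converse implications back to \ref{prop:SP-gruppi:SP}.

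For \ref{prop:SP-gruppi:cap1} $\Rightarrow$ \ref{prop:SP-gruppi:SP}, I would unwind the inclusion $\funcontcomp(\mmax,\insZ)\subseteq\Psi(\Inv(R))$ to the statement that every continuous radical ideal is invertible: given a finitely generated ideal $J$, its radical $\rad(J)$ is a continuous radical ideal by Proposition \ref{prop:nuI-radical}\ref{prop:nuI-radical:equiv}, so $\nu_{\rad(J)}$ lies in $\funcontcomp(\mmax,\insZ)\subseteq\Psi(\Inv(R))$; writing $\nu_{\rad(J)}=\nu_L$ with $L$ invertible and using the injectivity of $\Psi$ (Proposition \ref{prop:nuI}) forces $\rad(J)=L$, which is then finitely generated. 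Hence the radical of every finitely generated ideal is finitely generated and $R$ is an SP-domain by Theorem \ref{teor:caratt-SP}\ref{teor:caratt-SP:radical}. For \ref{prop:SP-gruppi:cap2} $\Rightarrow$ \ref{prop:SP-gruppi:SP} I would argue by contraposition: if $R$ is not an SP-domain, then $\inscrit(R)\neq\emptyset$ by Theorem \ref{teor:caratt-SP}\ref{teor:caratt-SP:critical}; choosing $M\in\inscrit(R)$ and a nonzero $a\in M$, the principal ideal $aR$ is invertible and satisfies $M\in V(aR)\cap\inscrit(R)$, so $\nu_{aR}$ is \emph{not} continuous by Proposition \ref{prop:nuI-continuous-invt}. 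Then $\Psi(aR)\in\Psi(\Inv(R))\setminus\funcontcomp(\mmax,\insZ)$ witnesses the failure of $\Psi(\Inv(R))\subseteq\funcontcomp(\mmax,\insZ)$, so \ref{prop:SP-gruppi:cap2} fails.

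I do not expect a genuinely hard step: all the analytic work is already packaged in Corollary \ref{cor:SP-inv}, Proposition \ref{prop:nuI-radical} and Proposition \ref{prop:nuI-continuous-invt}. The only points requiring care are the bookkeeping ones, namely translating the intersection and quotient conditions into the two set-theoretic inclusions, and remembering to pass from an equality of valuation maps $\nu_{\rad(J)}=\nu_L$ to the equality of the ideals themselves via the injectivity of $\Psi$.
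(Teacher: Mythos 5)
Your proposal is correct and takes essentially the same route as the paper's proof: the same formal reduction of \ref{prop:SP-gruppi:cap0}, \ref{prop:SP-gruppi:r}, \ref{prop:SP-gruppi:i} to the two inclusions underlying \ref{prop:SP-gruppi:cap1} and \ref{prop:SP-gruppi:cap2}, and the same key ingredients (Proposition \ref{prop:nuI-radical}, Proposition \ref{prop:nuI-continuous-invt}, and the generation argument of Proposition \ref{prop:funcontcomp}, which you access through Corollary \ref{cor:SP-inv}) to link those inclusions to the SP property. The only cosmetic differences are that you cite Corollary \ref{cor:SP-inv} where the paper reruns the generation argument directly, and that you prove \ref{prop:SP-gruppi:cap2} $\Longrightarrow$ \ref{prop:SP-gruppi:SP} by contraposition via critical ideals (items \ref{teor:caratt-SP:critical} and \ref{prop:nuI-continuous-invt:crit}) instead of the paper's biconditional via invertibility of radicals (items \ref{teor:caratt-SP:radical} and \ref{prop:nuI-continuous-invt:radI}) --- adjacent items of the same results.
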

\begin{proof}
\ref{prop:SP-gruppi:cap1} $\iff$ \ref{prop:SP-gruppi:r} and \ref{prop:SP-gruppi:cap2} $\iff$ \ref{prop:SP-gruppi:i} are a direct consequence of the definitions; thus, it is enough to consider the first four conditions.

The domain $R$ is an SP-domain if and only if $\rad(I)$ is invertible for every integral invertible ideal $I$, and thus by Proposition \ref{prop:nuI-continuous-invt} if and only if $\nu_I$ is continuous for every invertible ideal $I$; i.e., $R$ is an SP-domain if and only if $\Psi(\Inv(R))\subseteq\funcontcomp(\mmax,\insZ)$, if and only if $\sigcap(R)=\Psi(\Inv(R))$. Thus \ref{prop:SP-gruppi:SP} $\iff$ \ref{prop:SP-gruppi:cap2}.

On the other hand, $\funcontcomp(\mmax,\insZ)$ is generated by the $\nu_I$, as $I$ ranges among the continuous radical ideals $I$; thus, $\funcontcomp(\mmax,\insZ)\subseteq\Psi(\Inv(R))$ (i.e., $\sigcap(R)=\funcontcomp(\mmax,\insZ)$) if and only if, for every radical ideal $I$, $\nu_I$ is continuous, if and only if every such $I$ is invertible. By Propositions \ref{prop:nuI-radical} and \ref{prop:nuI-continuous-invt}, this happens if and only if, whenever $I$ is the radical of a finitely generated ideal, then it is itself finitely generated, and thus if and only if $R$ is an SP-domain. Thus \ref{prop:SP-gruppi:SP} $\iff$ \ref{prop:SP-gruppi:cap1}.

Finally, it is clear that \ref{prop:SP-gruppi:cap0} implies \ref{prop:SP-gruppi:cap1} and \ref{prop:SP-gruppi:cap2}, and that \ref{prop:SP-gruppi:cap1} and \ref{prop:SP-gruppi:cap2} together imply \ref{prop:SP-gruppi:cap0}. Since \ref{prop:SP-gruppi:cap1} and \ref{prop:SP-gruppi:cap2} are both equivalent to \ref{prop:SP-gruppi:SP}, all conditions are equivalent.
\end{proof}

\begin{prop}\label{prop:sigcap-iso}
Let $R$ be an almost Dedekind domain. Then, the restriction map gives an isomorphism $\sigcap(R)\simeq\funcontcomp(\mmax\setminus\inscrit(R),\insZ)$; in particular, $\sigcap(R)$ is a free abelian group.
\end{prop}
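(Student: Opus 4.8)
The plan is to exhibit the restriction map
\[
\rho\colon\sigcap(R)\longrightarrow\funcontcomp(\mmax\setminus\inscrit(R),\insZ),\qquad \nu_I\longmapsto\nu_I|_{\mmax\setminus\inscrit(R)},
\]
as a group isomorphism; freeness then follows at once, since $\funcontcomp(\mmax\setminus\inscrit(R),\insZ)$ is a subgroup of $\insbound(\mmax\setminus\inscrit(R),\insZ)$ by Lemma \ref{lemma:cont->bound}, and the latter is free by \cite{nobeling}.

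The observation underlying the whole argument is that every element of $\sigcap(R)$ vanishes on $\inscrit(R)$. Indeed, if $\nu_I\in\sigcap(R)$ then $I$ is invertible and $\nu_I$ is continuous; writing $I=JL^{-1}$ with $J,L$ integral and invertible via Lemma \ref{lemma:quot-cont}, we have $\supp\nu_I\subseteq V(J)\cup V(L)$, and by Proposition \ref{prop:nuI-continuous-invt} both $V(J)$ and $V(L)$ are disjoint from $\inscrit(R)$. Hence $\supp\nu_I\cap\inscrit(R)=\emptyset$. Since $\inscrit(R)$ is closed (Proposition \ref{prop:crit-closed}), the set $\mmax\setminus\inscrit(R)$ is an open subspace, so $\nu_I|_{\mmax\setminus\inscrit(R)}$ is continuous; its nonzero set coincides with that of $\nu_I$, which already lies inside $\mmax\setminus\inscrit(R)$, so its support equals the compact set $\supp\nu_I$. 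Thus $\rho$ is well defined, and it is plainly additive. Injectivity is immediate: if $\rho(\nu_I)=0$ then, as $\nu_I$ also vanishes on $\inscrit(R)$, it is identically zero, whence $I=R$ by Proposition \ref{prop:nuI}.

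The substance lies in surjectivity. Given $g\in\funcontcomp(\mmax\setminus\inscrit(R),\insZ)$, I would first extend it to $\tilde g\colon\mmax\to\insZ$ by setting $\tilde g\equiv0$ on $\inscrit(R)$ and check that $\tilde g\in\funcontcomp(\mmax,\insZ)$. By Lemma \ref{lemma:cont->bound} the function $g$ is bounded, hence takes only finitely many values; for each $n\neq0$ the fiber $g^{-1}(n)$ is clopen in the subspace $\mmax\setminus\inscrit(R)$ and contained in the compact set $\supp g$, so it is compact, and therefore, as $\mmax$ is Hausdorff, clopen in $\mmax$. Consequently the nonzero set of $\tilde g$ is a finite union of sets clopen in $\mmax$, so $\tilde g$ is continuous, and its support is this same finite union, hence compact and disjoint from $\inscrit(R)$. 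It then remains to realize $\tilde g$ as some $\nu_I$ with $I$ invertible: writing $\tilde g=\sum_n n\,\chi_{X_n}$ over the finitely many values $n\neq0$, where each $X_n:=\tilde g^{-1}(n)$ is clopen, compact and contained in $\mmax\setminus\inscrit(R)$, Lemma \ref{lemma:capX} yields radical ideals $J_n:=\bigcap\{P\mid P\in X_n\}$ with $V(J_n)=X_n$. Each $J_n$ is the radical of a finitely generated ideal by Proposition \ref{prop:nuI-radical} (its $V$ is clopen), and since $V(J_n)\cap\inscrit(R)=\emptyset$, Proposition \ref{prop:crit-VI} forces $J_n$ to be finitely generated, hence invertible. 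Setting $I:=\prod_n J_n^{\,n}$ (as a fractional ideal), $I$ is invertible with $\nu_I=\sum_n n\,\nu_{J_n}=\tilde g$, so $\nu_I\in\sigcap(R)$ and $\rho(\nu_I)=g$.

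The step I expect to be the main obstacle is the continuity of the zero-extension $\tilde g$ across $\inscrit(R)$: it hinges precisely on the fact that, away from the value $0$, a continuous compactly supported $\insZ$-valued function on the open subspace has clopen and compact fibers, which then promote to clopen subsets of the ambient space $\mmax$. Once this is in place, the passage from the clopen pieces $X_n$ to invertible radical ideals via Propositions \ref{prop:nuI-radical} and \ref{prop:crit-VI} is exactly the mechanism already used in Propositions \ref{prop:nuI-continuous-invt} and \ref{prop:funcontcomp}, and the remaining verifications are routine.
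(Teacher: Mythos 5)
Your proof is correct and follows essentially the same route as the paper's: well-definedness and injectivity via Lemma \ref{lemma:quot-cont} and Proposition \ref{prop:nuI-continuous-invt}, and surjectivity by writing the target function as a finite $\insZ$-combination of characteristic functions of compact clopen sets, promoting these to clopen subsets of $\mmax$ (compact in Hausdorff $\Rightarrow$ closed, open in open $\Rightarrow$ open), and realizing them as invertible radical ideals via Lemma \ref{lemma:capX}, Proposition \ref{prop:nuI-radical} and Proposition \ref{prop:crit-VI}. Your explicit zero-extension $\tilde g$ is just a slightly more detailed packaging of the same step the paper performs directly on the clopen pieces $X_i$.
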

\begin{proof}
Let
\begin{equation*}
\begin{aligned}
\Phi\colon\sigcap(R) & \longrightarrow\funcontcomp(\mmax\setminus\inscrit(R),\insZ),\\
\nu_I & \longmapsto \nu_I|_{\mmax\setminus\inscrit(R)};
\end{aligned}
\end{equation*}
be the restriction map. Then, $\Phi$ is well-defined: if $I$ is invertible, then $I=JL^{-1}$ for some invertible continuous ideals $J,L$ (Lemma \ref{lemma:quot-cont}) and by Proposition \ref{prop:nuI-continuous-invt} both $V(J)$ and $V(L)$ are disjoint from $\inscrit(R)$. Hence, $\supp\nu_I\subseteq V(J)\cup V(L)\subseteq\mmax\setminus\inscrit(R)$, and so $\nu_I|_{\mmax\setminus\inscrit(R)}$ has compact support. Moreover, this shows that if $\Phi(\nu_I)$ is zero then the whole $\nu_I$ must be zero, and thus $\Phi$ is injective.

Let now $f\in\funcontcomp(\mmax\setminus\inscrit(R),\insZ)$: then, $f=\sum_ia_i\chi_{X_i}$ for some compact clopen subsets $X_i$ of $\mmax\setminus\inscrit(R)$ and some $a_i\inZ$. We claim that the $X_i$ are clopen in $\mmax$: indeed, they are open since they are open subsets of the open set $\mmax\setminus\inscrit(R)$, while they are closed since they are compact subsets of the Hausdorff space $\mmax$. By Lemma \ref{lemma:capX}, the ideals $J_i:=\bigcap\{P\mid P\in X_i\}$ are nonzero and such that $V(J_i)=X_i$; moreover, they are invertible by Proposition \ref{prop:nuI-continuous-invt}, since $X_i\cap\inscrit(R)=\emptyset$. Setting $I:=\prod_iJ_i^{a_i}$ we have $f=\nu_I$. Thus, $\Phi$ is surjective and hence an isomorphism.

The last claim follows from the fact that $\funcontcomp(\mmax\setminus\inscrit(R),\insZ)$ is free, being a subgroup of the free group $\mathcal{C}(\mmax\setminus\inscrit(R),\insZ)$ \cite[Satz 1]{nobeling}.
\end{proof}

For $\sgmr(R)$ the result is similar, but involves $\inscrit(R)$ instead of its complement.
\begin{prop}\label{prop:funct-crit}
Let $R$ be an almost Dedekind domain. Then, $\sgmr(R)\simeq\funcontcomp(\inscrit(R),\insZ)$; in particular, $\sgmr(R)$ is a free abelian group.
\end{prop}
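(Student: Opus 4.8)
The plan is to realize $\sgmr(R)$ as the image of the restriction-to-$\inscrit(R)$ map and apply the first isomorphism theorem. Concretely, I would introduce
\[
\rho\colon\funcontcomp(\mmax,\insZ)\longrightarrow\funcontcomp(\inscrit(R),\insZ),\qquad f\longmapsto f|_{\inscrit(R)},
\]
and show it is a well-defined surjective group homomorphism with $\ker\rho=\sigcap(R)$. Well-definedness is routine: $\inscrit(R)$ is closed in $\mmax$ (Proposition \ref{prop:crit-closed}), so the restriction of a continuous function is continuous, and $\supp(f|_{\inscrit(R)})\subseteq\supp f\cap\inscrit(R)$ is a closed subset of a compact set, hence compact. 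The homomorphism property is immediate from pointwise addition.

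For the kernel the key observation is that a continuous function $f\colon\mmax\to\insZ$ (with $\insZ$ discrete) has clopen cozero set, so $\supp f$ \emph{equals} its cozero set and the condition $f|_{\inscrit(R)}=0$ is equivalent to $\supp f\cap\inscrit(R)=\emptyset$. I then invoke Proposition \ref{prop:sigcap-iso}: its proof shows that every element of $\sigcap(R)$ has support contained in $\mmax\setminus\inscrit(R)$, which gives $\sigcap(R)\subseteq\ker\rho$; conversely, given $f\in\ker\rho$, its support is disjoint from $\inscrit(R)$, so $f|_{\mmax\setminus\inscrit(R)}\in\funcontcomp(\mmax\setminus\inscrit(R),\insZ)$, and by the surjectivity part of Proposition \ref{prop:sigcap-iso} this restriction equals $\nu_I|_{\mmax\setminus\inscrit(R)}$ for some $\nu_I\in\sigcap(R)$; since both $f$ and $\nu_I$ vanish on $\inscrit(R)$ and agree off it, $f=\nu_I\in\sigcap(R)$. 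Hence $\ker\rho=\sigcap(R)$.

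The heart of the argument is the surjectivity of $\rho$, i.e.\ extending a continuous compactly supported function from the closed subspace $\inscrit(R)$ to all of $\mmax$. Given $g\in\funcontcomp(\inscrit(R),\insZ)$, Lemma \ref{lemma:cont->bound} lets me write $g=\sum_{i=1}^n a_i\chi_{Y_i}$ with each $Y_i$ a compact clopen subset of $\inscrit(R)$ contained in $K:=\supp g$. The obstacle — which I expect to be the only real difficulty — is that critical points need not admit compact clopen neighborhoods in $\mmax$, so I cannot extend each $\chi_{Y_i}$ naively while preserving compact support. I would resolve this by first producing a single compact clopen $T\subseteq\mmax$ with $K\subseteq T$, exploiting one-dimensionality: since $\Spec(R)=\mmax\cup\{(0)\}$, inside the Boolean space $\Spec(R)^\cons$ the set $K$ is compact, hence closed, and disjoint from the point $(0)$; separating them by a clopen set $T$ of $\Spec(R)^\cons$ with $K\subseteq T$ and $(0)\notin T$, and using that $(0)$ is the only non-maximal prime, forces $T\subseteq\mmax$, so $T$ is compact clopen in $\mmax$.

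With $T$ fixed, for each $i$ I separate $Y_i$ from the closed set $\inscrit(R)\setminus Y_i$ by a clopen $W_i$ of $\mmax$ (possible because $\mmax$ has a basis of clopen sets and $Y_i$ is compact), arranging $Y_i\subseteq W_i$ and $W_i\cap(\inscrit(R)\setminus Y_i)=\emptyset$, and set $Z_i:=W_i\cap T$. Then each $Z_i$ is compact clopen with $Z_i\cap\inscrit(R)=Y_i$, so $\tilde g:=\sum_i a_i\chi_{Z_i}$ is continuous, has support inside the compact set $T$, and satisfies $\rho(\tilde g)=g$. Finally the first isomorphism theorem yields $\sgmr(R)=\funcontcomp(\mmax,\insZ)/\sigcap(R)\simeq\funcontcomp(\inscrit(R),\insZ)$, and this group is free since it is a subgroup of the free group $\insbound(\inscrit(R),\insZ)$ (Lemma \ref{lemma:cont->bound} and \cite[Satz 1]{nobeling}).
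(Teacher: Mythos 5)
Your proposal is correct, and while it shares the paper's basic strategy---realizing $\sgmr(R)$ through restriction of functions to $\inscrit(R)$, with kernel $\sigcap(R)$---the execution differs in two genuine ways. First, where you identify $\ker\rho=\sigcap(R)$ by quoting Proposition \ref{prop:sigcap-iso} twice (once for the fact that elements of $\sigcap(R)$ have support disjoint from $\inscrit(R)$, once for surjectivity onto $\funcontcomp(\mmax\setminus\inscrit(R),\insZ)$), the paper re-runs the ideal-theoretic argument from scratch: it writes a function vanishing on $\inscrit(R)$ as $\sum_i c_i\chi_{Z_i}$, manufactures radical ideals via Lemma \ref{lemma:capX}, invokes Proposition \ref{prop:crit-VI} for invertibility, and in the converse direction uses Lemma \ref{lemma:quot-cont} and Proposition \ref{prop:nuI-continuous-invt}; the two computations are equivalent, yours being shorter only because Proposition \ref{prop:sigcap-iso} already packaged that work. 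Second, and more substantively, the paper concludes by asserting that the quotient map $\pi$ ``factors through'' $\funcontcomp(\inscrit(R),\insZ)$ and that the induced map is injective; for this to produce an isomorphism onto \emph{all} of $\funcontcomp(\inscrit(R),\insZ)$ one also needs the restriction map $\funcontcomp(\mmax,\insZ)\longrightarrow\funcontcomp(\inscrit(R),\insZ)$ to be surjective, i.e., an extension theorem for compactly supported continuous functions on the closed subspace $\inscrit(R)$, and the paper never proves this. Your construction---separating $\supp g$ from the generic point $(0)$ inside the Stone space $\Spec(R)^\cons$ to obtain a compact clopen $T\subseteq\mmax$, then separating each level set $Y_i$ from $\inscrit(R)\setminus Y_i$ by a clopen $W_i$ and taking $Z_i=W_i\cap T$---supplies exactly this missing step, and it is sound: local compactness of $\mmax$ enters precisely through $\mmax=\Spec(R)^\cons\setminus\{(0)\}$, and the verification $Z_i\cap\inscrit(R)=Y_i$ goes through as you state. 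So your proof is not only valid but is actually more complete than the paper's at its one delicate point.
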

\begin{proof}
Consider the quotient map $\pi:\funcontcomp(\mmax,\insZ)\longrightarrow\sgmr(R)$, and let $f,g\in\funcontcomp(\mmax,\insZ)$. We claim that $\pi(f)=\pi(g)$ if and only if the restrictions of $f$ and $g$ to $\inscrit(R)$ are the same, and to do so it is enough to prove that $\pi(f)=0$ if and only if $f|_{\inscrit(R)}=0$.

Indeed, suppose that $f|_{\inscrit(R)}=0$. We can write $f=\sum_ic_i\chi_{Z_i}$, for some disjoint clopen and compact sets $Z_i$, and the hypothesis implies that no $Z_i$ meets $\inscrit(R)$. By Lemma \ref{lemma:capX}, we can find radical ideals $J_i$ such that $\nu_{J_i}=\chi_{Z_i}$; by Proposition \ref{prop:nuI-radical}, each $J_i$ is the radical of a finitely generated ideal, and since $Z_i\cap\inscrit(R)=\emptyset$, by Proposition \ref{prop:crit-VI} the $J_i$ themselves are finitely generated and thus invertible. Let $L:=\prod_iJ_i^{a_i}$: then, $f=\nu_L\in\Psi(\Inv(R))$, and thus $f$ is in the kernel of $\pi$, i.e., $\pi(f)=0$.

Conversely, suppose that $\pi(f)=0$. Then, $f=\nu_L$ for some $L\in\Inv(R)$; by Lemma \ref{lemma:quot-cont} $f=\nu_I-\nu_J$ for some invertible integral ideals $I,J$ such that $\nu_I,\nu_J$ are continuous. By Proposition \ref{prop:nuI-continuous-invt}, $I$ and $J$ are products of continuous invertible radical ideals; by Proposition \ref{prop:crit-VI}, the support of each of these radical ideals does not meet $\inscrit(R)$. Therefore, $\nu_I-\nu_J$ is the zero function on $\inscrit(R)$, i.e., $f|_{\inscrit(R)}=0$, as claimed.

The first part of the above equivalence implies that the quotient $\pi$ factors through $\funcontcomp(\inscrit(R),\insZ)$, i.e., that we have a chain of maps
\begin{equation*}
\funcontcomp(\mmax,\insZ)\longrightarrow\funcontcomp(\inscrit(R),\insZ)\longrightarrow\sgmr(R),
\end{equation*}
and the second part that the rightmost map is injective. It follows that $\funcontcomp(\inscrit(R),\insZ)$ and $\sgmr(R)$ are isomorphic, as claimed. The ``in particular'' part now follows from the fact that $\funcontcomp(\inscrit(R),\insZ)$ is free, as a subgroup of the free group $\insbound(\inscrit(R),\insZ)$ (see \cite[Satz 1]{specker} and Lemma \ref{lemma:cont->bound}).
\end{proof}

\begin{cor}
Let $R$ be an almost Dedekind domain. Then, $\sgmr(R)$ has finite rank if and only if $\inscrit(R)$ is finite; in this case, $|\rk\sgmr(R)|=|\inscrit(R)|$.
\end{cor}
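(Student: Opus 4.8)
The plan is to reduce the statement to a purely topological fact via the isomorphism $\sgmr(R)\simeq\funcontcomp(\inscrit(R),\insZ)$ of Proposition \ref{prop:funct-crit}: writing $X:=\inscrit(R)$, I must determine for which $X$ the free group $\funcontcomp(X,\insZ)$ has finite rank. The space $X$ is closed in $\mmax$ (Proposition \ref{prop:crit-closed}), Hausdorff, and carries a basis of compact open sets. The last point I would justify as follows: every $M\in\mmax$ lies in $V(a)=\supp\nu_{(a)}$ for any nonzero $a\in M$, which is clopen and compact by Proposition \ref{prop:suppnuI}; this compact, Hausdorff, totally disconnected neighbourhood has a basis of clopen (hence compact) sets, so $\mmax$ has a basis of compact open sets, and intersecting these with the closed set $X$ gives a basis of compact open sets for $X$.

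The easy direction and the rank computation come together. If $X$ is finite, then as a finite Hausdorff space it is discrete, so every map $X\to\insZ$ is continuous with finite (hence compact) support; thus $\funcontcomp(X,\insZ)=\insfunct(X,\insZ)\simeq\insZ^{|X|}$ is free of rank exactly $|X|=|\inscrit(R)|$. This shows that $\inscrit(R)$ finite implies $\sgmr(R)$ of finite rank, and pins down the rank in that case.

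For the converse I would prove the contrapositive: if $X$ is infinite, then $\funcontcomp(X,\insZ)$ has infinite rank. The engine is that the characteristic functions of pairwise disjoint nonempty compact open sets are $\insZ$-linearly independent (evaluating a vanishing integer combination at a point of one of the sets forces the corresponding coefficient to vanish), so it suffices to exhibit infinitely many pairwise disjoint nonempty compact open subsets of $X$. If $X$ is discrete it is infinite discrete and its singletons suffice. Otherwise $X$ has a non-isolated point $x$; beginning with a compact open neighbourhood $N_0$ of $x$, I would recursively choose a point $z_{n+1}\in N_n\setminus\{x\}$ (available since $x$ is non-isolated), separate it from $x$ by a compact open set and intersect with $N_n$ to obtain a compact open $U_{n+1}\subseteq N_n$ with $z_{n+1}\in U_{n+1}$ and $x\notin U_{n+1}$, and set $N_{n+1}:=N_n\setminus U_{n+1}$. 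Because compact sets are closed in the Hausdorff space $X$, each $U_{n+1}$ is clopen and each $N_{n+1}$ is again compact open and contains $x$, so the recursion never terminates and the $U_n$ are pairwise disjoint.

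The main obstacle is precisely this recursive construction: one must check that the sets stay simultaneously compact and open and that $x$ persists as a non-isolated point of every shrunken neighbourhood $N_n$, which is where the two standing hypotheses on $X$ are essential — Hausdorffness, so that the removed pieces $U_n$ are closed and the differences $N_n\setminus U_{n+1}$ stay open, and the basis of compact open sets, so that $z_{n+1}$ and $x$ can be separated by a compact open set. Given the infinite disjoint family, the linear independence of the associated characteristic functions is immediate and yields infinite rank, completing the contrapositive and hence the corollary.
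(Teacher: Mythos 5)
Your proof is correct and takes the same route as the paper: both reduce, via the isomorphism $\sgmr(R)\simeq\funcontcomp(\inscrit(R),\insZ)$ of Proposition \ref{prop:funct-crit}, to determining when this group of functions has finite rank, and both compute the rank in the finite case as $|\inscrit(R)|$ since a finite Hausdorff space is discrete. The only difference is one of detail: the paper simply asserts that $\funcontcomp(\inscrit(R),\insZ)$ has finite rank if and only if $\inscrit(R)$ is finite, whereas you actually prove the nontrivial direction by exhibiting infinitely many pairwise disjoint nonempty compact open subsets of $\inscrit(R)$ --- which indeed requires your preliminary observation that $\mmax$, and hence its closed subspace $\inscrit(R)$, has a basis of compact open sets, a point the paper leaves implicit.
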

\begin{proof}
The group $\funcontcomp(\inscrit(R),\insZ)$ has finite rank if and only if $\inscrit(R)$ is finite, and in this case $\funcontcomp(\inscrit(R),\insZ)=\bigoplus\{\insZ\mid M\in\inscrit(R)\}$ has rank $|\inscrit(R)|$. The claim follows from Proposition \ref{prop:funct-crit}.
\end{proof}

\begin{cor}
Let $R,R'$ be almost Dedekind domains. If $\inscrit(R)$ and $\inscrit(R')$ are homeomorphic (when they are endowed with the inverse topology) then $\sgmr(R)\simeq\sgmr(R')$.
\end{cor}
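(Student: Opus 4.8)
The plan is to reduce the claim entirely to Proposition \ref{prop:funct-crit}, which already identifies $\sgmr(R)$ with the function group $\funcontcomp(\inscrit(R),\insZ)$. Once this identification is in place, the statement becomes purely topological: I only need to show that the assignment $X\mapsto\funcontcomp(X,\insZ)$ is invariant under homeomorphism. Concretely, by Proposition \ref{prop:funct-crit} we have $\sgmr(R)\simeq\funcontcomp(\inscrit(R),\insZ)$ and $\sgmr(R')\simeq\funcontcomp(\inscrit(R'),\insZ)$, so it suffices to produce an isomorphism between the two function groups out of a homeomorphism of the underlying spaces.

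First I would fix a homeomorphism $\phi\colon\inscrit(R)\longrightarrow\inscrit(R')$ (both endowed with the inverse topology) and consider the precomposition map $f\mapsto f\circ\phi$ from $\funcontcomp(\inscrit(R'),\insZ)$ to $\funcontcomp(\inscrit(R),\insZ)$. The work then splits into three routine verifications: that $f\circ\phi$ really lands in $\funcontcomp$, that the map is a group homomorphism, and that it is bijective. Continuity of $f\circ\phi$ is immediate from the continuity of $f$ and $\phi$. For compact support, I would note that the non-zero set of $f\circ\phi$ is $\phi^{-1}$ of the non-zero set of $f$; since $\phi$ is a homeomorphism, it carries closures to closures, so $\supp(f\circ\phi)=\phi^{-1}(\supp f)$, which is compact because $\supp f$ is compact and $\phi^{-1}$ is continuous. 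Additivity is clear, and the inverse homomorphism is simply $g\mapsto g\circ\phi^{-1}$, so the map is an isomorphism.

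Chaining the three isomorphisms yields
\begin{equation*}
\sgmr(R)\simeq\funcontcomp(\inscrit(R),\insZ)\simeq\funcontcomp(\inscrit(R'),\insZ)\simeq\sgmr(R'),
\end{equation*}
which is exactly the desired conclusion.

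I do not expect any genuine obstacle here: the entire content has been absorbed into Proposition \ref{prop:funct-crit}, and what remains is the standard fact that the compactly-supported continuous $\insZ$-valued functions on a space depend only on its homeomorphism type. The only point deserving a word of care is the compact-support clause, where one must use that a homeomorphism commutes with taking closures in order to identify $\supp(f\circ\phi)$ with $\phi^{-1}(\supp f)$; everything else is formal.
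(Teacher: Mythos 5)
Your proof is correct and follows exactly the paper's approach: the paper's own proof is simply ``Immediate from Proposition \ref{prop:funct-crit},'' and your argument spells out the routine verification (that a homeomorphism induces, by precomposition, an isomorphism of the groups of compactly supported continuous $\insZ$-valued functions) which the paper leaves implicit.
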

\begin{proof}
Immediate from Proposition \ref{prop:funct-crit}.
\end{proof}

In general, it may not be easy to individuate the set $\inscrit(R)$ of critical maximal ideals of $R$. In the next proposition, we give a weaker version of Proposition \ref{prop:funct-crit} that only depend on the topological structure of $\mmax$. 
\begin{prop}
Let $R$ be an almost Dedekind domain. Then, there is a surjective map $\funcontcomp(\deriv(\mmax),\insZ)\longrightarrow\sgmr(R)$.
\end{prop}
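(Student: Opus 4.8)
The plan is to obtain the map as the composition of a restriction homomorphism with the isomorphism $\funcontcomp(\inscrit(R),\insZ)\simeq\sgmr(R)$ of Proposition \ref{prop:funct-crit}. Recall that $\inscrit(R)\subseteq\deriv(\mmax)$ by Proposition \ref{prop:crit-deriv} and that $\inscrit(R)$ is closed in $\mmax$ by Proposition \ref{prop:crit-closed}; since the isolated points of $\mmax$ form an open set, $\deriv(\mmax)$ is closed in $\mmax$, and therefore $\inscrit(R)$ is a closed subspace of $\deriv(\mmax)$. First I would check that the restriction map
\[
r\colon\funcontcomp(\deriv(\mmax),\insZ)\longrightarrow\funcontcomp(\inscrit(R),\insZ),\qquad h\longmapsto h|_{\inscrit(R)},
\]
is a well-defined group homomorphism: $h|_{\inscrit(R)}$ is continuous, and its support is contained in $\supp(h)\cap\inscrit(R)$, a closed subset of the compact set $\supp(h)$ and hence compact. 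Composing $r$ with the isomorphism of Proposition \ref{prop:funct-crit} then yields a homomorphism $\funcontcomp(\deriv(\mmax),\insZ)\longrightarrow\sgmr(R)$, so the whole problem reduces to showing that $r$ is surjective.

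The main obstacle is exactly this surjectivity, which is an extension problem: given a continuous compactly supported $g$ on the closed subspace $\inscrit(R)$, I must extend it to a continuous compactly supported function on $\deriv(\mmax)$. The key topological input I would isolate is that $\deriv(\mmax)$ admits a basis of \emph{compact} clopen sets. To see this, observe that $\Spec(R)\setminus\mmax=\{(0)\}=\bigcap_{f\neq 0}D(f)$ is closed in $\Spec(R)^\cons$ (each $D(f)$ being compact-open in the Zariski topology, hence clopen in the constructible one), so that $\mmax$ is an \emph{open} subspace of the compact Hausdorff space $\Spec(R)^\cons$; hence $\mmax$ is locally compact, and being totally disconnected it has a basis of compact clopen sets. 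Since $\deriv(\mmax)$ is closed in $\mmax$, it inherits a basis of compact clopen sets.

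With this in hand the extension is routine. By Lemma \ref{lemma:cont->bound} I may write $g=\sum_{i=1}^n a_i\chi_{Z_i}$ with the $Z_i$ pairwise disjoint and compact clopen in $\inscrit(R)$. For each $i$, the set $\inscrit(R)\setminus Z_i$ is closed in $\deriv(\mmax)$ and disjoint from the compact set $Z_i$, so covering $Z_i$ by finitely many compact clopen neighbourhoods of $\deriv(\mmax)$ that avoid $\inscrit(R)\setminus Z_i$ produces a compact clopen $W_i\subseteq\deriv(\mmax)$ with $W_i\cap\inscrit(R)=Z_i$; replacing $W_i$ by $W_i\setminus(W_1\cup\cdots\cup W_{i-1})$ I may take the $W_i$ pairwise disjoint without changing $W_i\cap\inscrit(R)$, since the $Z_j$ are disjoint. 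Then $h:=\sum_{i=1}^n a_i\chi_{W_i}$ lies in $\funcontcomp(\deriv(\mmax),\insZ)$ and restricts to $g$, proving that $r$ is surjective and completing the construction. Alternatively, one could extend all the way to $\mmax$ and compose with the quotient map $\funcontcomp(\mmax,\insZ)\to\sgmr(R)$, but routing through $\inscrit(R)$ keeps the well-definedness transparent and isolates the only real point, namely the compact clopen basis.
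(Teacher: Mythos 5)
Your proof is correct and follows essentially the same route as the paper: compose the restriction map $\funcontcomp(\deriv(\mmax),\insZ)\longrightarrow\funcontcomp(\inscrit(R),\insZ)$ (using Propositions \ref{prop:crit-deriv} and \ref{prop:crit-closed}) with the isomorphism of Proposition \ref{prop:funct-crit}. The only difference is one of detail: the paper asserts surjectivity of the restriction map in a single line from the closedness of $\inscrit(R)$, whereas you justify it explicitly via local compactness of $\mmax$ and extension of compactly supported step functions --- a worthwhile verification, but not a different argument.
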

\begin{proof}
By Proposition \ref{prop:crit-deriv}, $\inscrit(R)\subseteq\deriv(\mmax)$. Since $\inscrit(R)$ is closed in $\mmax$, the restriction map $\funcontcomp(\deriv(\mmax),\insZ)\longrightarrow\funcontcomp(\inscrit(R),\insZ)$ is surjective; the claim now follows from Proposition \ref{prop:funct-crit}.
\end{proof}

\section{The freeness of $\Inv(R)$}\label{sect:InvR}
In this section we study the group $\sgmi(R):=\Psi(\Inv(R))/\sigcap(R)$. The first result is that this group can actually be reduced to an already known object.

\begin{prop}\label{prop:sgmi-inv}
Let $R$ be an almost Dedekind domain, and let $T:=\bigcap\{R_P\mid P\in\inscrit(R)\}$.  Then, $\sgmi(R)\simeq\Inv(T)$.
\end{prop}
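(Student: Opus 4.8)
The plan is to realize $\sgmi(R)$ as a homomorphic image of $\Inv(R)$ via the extension map to $T$. Consider the extension homomorphism $\Theta\colon\Inv(R)\longrightarrow\Inv(T)$, $I\mapsto IT$. Since $R$ is almost Dedekind, hence Pr\"ufer, $\Theta$ is surjective with $\ker\Theta=\{I\in\Inv(R)\mid IT=T\}$. By the first isomorphism theorem it suffices to show that $\ker\Theta$ corresponds, under the injection $\Psi$, exactly to $\sigcap(R)$; that is, that for an invertible ideal $I$ one has $IT=T$ if and only if $\nu_I$ is continuous. Granting this, $\Psi^{-1}(\sigcap(R))=\Psi^{-1}(\Psi(\Inv(R))\cap\funcontcomp(\mmax,\insZ))=\{I\in\Inv(R)\mid\nu_I\text{~continuous}\}=\ker\Theta$, whence $\sgmi(R)=\Psi(\Inv(R))/\sigcap(R)\simeq\Inv(R)/\ker\Theta\simeq\Inv(T)$.

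If $\inscrit(R)=\emptyset$, then $R$ is an SP-domain (Theorem \ref{teor:caratt-SP}), so $T$ is the quotient field, $\Inv(T)=(0)$, and $\sgmi(R)=(0)$ by Proposition \ref{prop:SP-gruppi}; the statement holds trivially. Assume therefore $\inscrit(R)\neq\emptyset$. Since $\inscrit(R)$ is a nonempty closed subset of $\mmax$ (Proposition \ref{prop:crit-closed}), Lemma \ref{lemma:closed-intersect} identifies the maximal ideals of $T$ with the extensions $PT$, $P\in\inscrit(R)$, and for each such $P$ one has $T_{PT}=R_P$. Consequently $IT=T$ if and only if $IT_{PT}=IR_P=R_P$ for every $P\in\inscrit(R)$, i.e.\ if and only if $\nu_I(P)=0$ for all $P\in\inscrit(R)$; in short, $IT=T$ is equivalent to $\nu_I|_{\inscrit(R)}=0$.

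It remains to prove that, for an invertible ideal $I$, the vanishing $\nu_I|_{\inscrit(R)}=0$ is equivalent to the continuity of $\nu_I$; this is the technical heart of the argument, and I would reduce it to the integral case via the factorization of Lemma \ref{lemma:quot-cont}. Write $I=JL^{-1}$ with $J=I\cap R$ and $L=I^{-1}\cap R$; then $\nu_J=\sup\{\nu_I,0\}$ and $\nu_L=\sup\{-\nu_I,0\}$ are the positive and negative parts of $\nu_I$, so at each maximal ideal at most one of $\nu_J,\nu_L$ is nonzero. Hence $\nu_I|_{\inscrit(R)}=0$ forces both $\nu_J|_{\inscrit(R)}=0$ and $\nu_L|_{\inscrit(R)}=0$, while the converse is clear. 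For the integral invertible ideals $J$ and $L$, the condition $\nu_J|_{\inscrit(R)}=0$ reads $V(J)\cap\inscrit(R)=\emptyset$, which by Proposition \ref{prop:nuI-continuous-invt} is equivalent to the continuity of $\nu_J$ (and likewise for $L$); finally $\nu_I=\nu_J-\nu_L$ is continuous if and only if both $\nu_J$ and $\nu_L$ are, the nontrivial direction again coming from Lemma \ref{lemma:quot-cont}. Combining these equivalences yields that $\nu_I$ is continuous precisely when $\nu_I|_{\inscrit(R)}=0$, completing the identification $\ker\Theta=\Psi^{-1}(\sigcap(R))$ and hence the isomorphism $\sgmi(R)\simeq\Inv(T)$.

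The main obstacle I anticipate lies in handling the fractional (non-integral) invertible ideals: Proposition \ref{prop:nuI-continuous-invt} is stated only for integral ideals, so one must pass through the positive/negative part decomposition and exploit the disjointness of the non-zero sets of $\nu_J$ and $\nu_L$ to see that vanishing of their difference on $\inscrit(R)$ forces each part to vanish there separately. The identity $T_{PT}=R_P$ is routine for overrings of Pr\"ufer domains but should be recorded explicitly, as it underlies the translation between $IT=T$ and the pointwise vanishing of $\nu_I$ on $\inscrit(R)$.
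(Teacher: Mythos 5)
Your proposal is correct and follows essentially the same route as the paper: both realize $\sgmi(R)$ through the surjective extension map $\Inv(R)\longrightarrow\Inv(T)$ (using that $R$ is Pr\"ufer together with Lemma \ref{lemma:closed-intersect}) and then identify the kernel $\{I\in\Inv(R)\mid IT=T\}$ with $\sigcap(R)$, concluding by the first isomorphism theorem. The only difference is one of explicitness: where the paper delegates the kernel identification to Proposition \ref{prop:sigcap-iso}, you unfold that step directly via Lemma \ref{lemma:quot-cont} and Proposition \ref{prop:nuI-continuous-invt}, and you also treat the degenerate case $\inscrit(R)=\emptyset$, which the paper passes over silently.
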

\begin{proof}
Since $\inscrit(R)$ is closed, by Lemma \ref{lemma:closed-intersect} the maximal ideals of $T$ are the extensions of the critical ideals of $R$. Moreover, since $R$ is Pr\"ufer, the extension map $\Inv(R)\longrightarrow\Inv(T)$ is surjective, with kernel $K:=\{I\in\Inv(R)\mid IT=T\}=\{I\in\Inv(R)\mid\supp(\nu_I)\cap\inscrit(R)=\emptyset\}$. Therefore, the corresponding surjective map $\Psi(\Inv(R))\longrightarrow\Inv(T)$ has kernel
\begin{equation*}
\{\nu_I\mid \supp(\nu_I)\cap\inscrit(R)=\emptyset\}=\sigcap(R),
\end{equation*}
using Proposition \ref{prop:sigcap-iso}. Hence, $\Inv(T)\simeq\Psi(\Inv(R))/\sigcap(R)$, which is $\sgmi(R)$ by definition.
\end{proof}

Therefore, we have an exact sequence
\begin{equation*}
0\longrightarrow\sigcap(R)\longrightarrow\Inv(R)\longrightarrow\Inv(T)\longrightarrow 0.
\end{equation*}
The ring $T$ is itself an almost Dedekind domain, and thus we can apply the same reasoning: setting $T_2:=\bigcap\{T_P\mid P\in\inscrit(T)\}$, we have
\begin{equation*}
0\longrightarrow\sigcap(T)\longrightarrow\Inv(T)\longrightarrow\Inv(T_2)\longrightarrow 0
\end{equation*}
or more generally
\begin{equation*}
0\longrightarrow\sigcap(T_k)\longrightarrow\Inv(T_k)\longrightarrow\Inv(T_{k+1})\longrightarrow 0
\end{equation*}
where $T_{k+1}:=\bigcap\{(T_k)_P\mid P\in\inscrit(T_k)\}$ (and we set $T_0:=D$ and $T_1:=T$ for uniformity). For this reason, we introduce the following definition.
\begin{defin}
Let $R$ be an almost Dedekind domain. For every ordinal $\alpha$, define recursively the following:
\begin{itemize}
\item $\inscrit_0(R):=\mmax$;
\item $T_0:=R$;
\item if $\alpha=\gamma+1$ is a successor ordinal,
\begin{equation*}
\inscrit_\alpha(R):=\{P\in\mmax\mid PT_\gamma\in\inscrit(T_\gamma)\};
\end{equation*}
\item if $\alpha$ is a limit ordinal,
\begin{equation*}
\inscrit_\alpha(R):=\bigcap_{\gamma<\alpha}\inscrit_\gamma(R);
\end{equation*}
\item $T_\alpha:=\bigcap\{R_P\mid P\in\inscrit_\alpha(R)\}$.
\end{itemize}
We call the minimal ordinal $\alpha$ such that $\inscrit_\alpha(R)=\inscrit_{\alpha+1}(R)$ (equivalently, such that $T_\alpha=T_{\alpha+1}$) the \emph{SP-rank} of $R$. If $\inscrit_\alpha(R)=\emptyset$ (equivalently, if $T_\alpha=K$) for this $\alpha$, then we say that $R$ is \emph{SP-scattered}.
\end{defin}

Note that, if $\inscrit_\alpha(R)=\inscrit_{\alpha+1}(R)$, then, for all $\gamma>\alpha$, $\inscrit_\alpha(R)=\inscrit_\gamma(R)$ and $T_\gamma=T_\alpha$.

The following two lemmas generalize Propositions \ref{prop:crit-closed} and \ref{prop:crit-deriv}, and allow to give a sufficient condition for $R$ to be SP-scattered. In particular, Lemma \ref{lemma:incritalpha} can be seen as a variant of \cite[Lemma 6.5]{HK-Olb-Re}.
\begin{lemma}\label{lemma:incritalpha}
Let $R$ be an almost Dedekind domain, and let $\alpha$ be an ordinal. Then:
\begin{enumerate}[(a)]
\item for every $P\in\mmax_R$, $PT_\alpha\neq T_\alpha$ if and only if $P\in\inscrit_\alpha(R)$;
\item $\inscrit_\alpha(R)$ is the image of $\mmax_{T_\alpha}$ under the canonical restriction map $\mmax_{T_\alpha}\longrightarrow\mmax_R$;
\item $\inscrit_\alpha(R)$ is closed in $\mmax_R$.
\end{enumerate}
\end{lemma}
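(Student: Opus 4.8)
The plan is to prove all three parts by transfinite induction on $\alpha$, with the key observation that once part (c) is known for a given $\alpha$, parts (a) and (b) follow immediately and uniformly. Indeed, suppose $\inscrit_\alpha(R)$ is closed in $\mmax_R$. If $\inscrit_\alpha(R)=\emptyset$ then $T_\alpha=K$ and all three statements are vacuous; otherwise Lemma \ref{lemma:closed-intersect}, applied to $X=\inscrit_\alpha(R)$, identifies the maximal ideals of $T_\alpha=\bigcap\{R_P\mid P\in\inscrit_\alpha(R)\}$ with the extensions $PT_\alpha$, $P\in\inscrit_\alpha(R)$. The argument in the proof of that lemma shows that $PT_\alpha\neq T_\alpha$ exactly when $P\in\inscrit_\alpha(R)$, which is (a); and since $PT_\alpha\cap R=P$ for such $P$, the image of the restriction map $\mmax_{T_\alpha}\longrightarrow\mmax_R$ is precisely $\inscrit_\alpha(R)$, which is (b). Thus I would reduce everything to proving (c), that each $\inscrit_\alpha(R)$ is closed.

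For the induction, the base case $\inscrit_0(R)=\mmax_R$ is trivially closed, and for a limit ordinal $\alpha$ the set $\inscrit_\alpha(R)=\bigcap_{\gamma<\alpha}\inscrit_\gamma(R)$ is closed as an intersection of closed sets, by the inductive hypothesis. The heart of the matter is the successor case $\alpha=\gamma+1$. Assuming $\inscrit_\gamma(R)$ is closed (and nonempty, the empty case being trivial), Lemma \ref{lemma:closed-intersect} provides a bijection $\rho\colon\inscrit_\gamma(R)\longrightarrow\mmax_{T_\gamma}$, $P\mapsto PT_\gamma$. Since $\inscrit_\alpha(R)=\{P\mid PT_\gamma\in\inscrit(T_\gamma)\}$ is contained in $\inscrit_\gamma(R)$ and equals $\rho^{-1}(\inscrit(T_\gamma))$, and since $\inscrit(T_\gamma)$ is closed in $\mmax_{T_\gamma}$ by Proposition \ref{prop:crit-closed}, it suffices to show that $\rho$ is continuous for the inverse topologies: then $\inscrit_\alpha(R)$ is closed in $\inscrit_\gamma(R)$, hence in $\mmax_R$.

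To prove $\rho$ continuous I would check preimages of a subbasis for the inverse topology on $\mmax_{T_\gamma}$, namely the sets $V_{T_\gamma}(z)=\{\mathfrak{m}\mid z\in\mathfrak{m}\}$ for $z\in T_\gamma$. For $P\in\inscrit_\gamma(R)$ one has $PT_\gamma=PR_P\cap T_\gamma$, so for $z\in T_\gamma$ the condition $z\in PT_\gamma$ is equivalent to $v_P(z)\geq 1$; thus $\rho^{-1}(V_{T_\gamma}(z))=\{P\in\inscrit_\gamma(R)\mid v_P(z)\geq 1\}$. Given $P_0$ in this set, I would write $z=s/t$ with $s,t\in R$ and $t\notin P_0$ (possible since $z\in T_\gamma\subseteq R_{P_0}$); then $v_{P_0}(s)\geq 1$, so $s\in P_0$, and the set $N:=\{P\in\inscrit_\gamma(R)\mid t\notin P,\ s\in P\}$ is clopen in $\inscrit_\gamma(R)$ (because $D(t)$ and $V(s)$ are clopen in $\mmax_R$, as $s,t\in R$), contains $P_0$, and satisfies $v_P(z)=v_P(s)-v_P(t)=v_P(s)\geq 1$ for every $P\in N$. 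Hence $N\subseteq\rho^{-1}(V_{T_\gamma}(z))$, so this preimage is open and $\rho$ is continuous.

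The main obstacle is exactly this last continuity statement. In \cite{HK-Olb-Re} the Jacobson radical is nonzero, so $\mmax_R$ and every $\mmax_{T_\gamma}$ are compact; there $\rho^{-1}$ is a continuous bijection from a compact space to a Hausdorff one and is automatically a homeomorphism. Here $\mmax_R$ may fail to be compact, and $\inscrit_\gamma(R)$, though closed, need not be compact, so this shortcut is unavailable and the continuity of $\rho$ must be established by hand as above. I would note in passing that $\rho$ is in fact a homeomorphism: the inverse $\mathfrak{m}\mapsto\mathfrak{m}\cap R$ is continuous because its preimage of $V_R(a)$ (for $a\in R$) is $V_{T_\gamma}(a)\cap\mmax_{T_\gamma}$, and moreover each $V_{T_\gamma}(z)$ with $z\neq 0$ consists of maximal ideals only and is therefore compact, so these preimages are even clopen, consistent with the total disconnectedness of $\mmax_R$.
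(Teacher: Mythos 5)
Your proof is correct, and its overall skeleton coincides with the paper's: transfinite induction, trivial base case, intersection of closed sets at limit ordinals, and the derivation of (a) and (b) from (c) via Lemma \ref{lemma:closed-intersect} applied to $X=\inscrit_\alpha(R)$. The two proofs part ways only at the successor step, which is the heart of the lemma. The paper disposes of it in one line: the restriction map $\mmax_{T_\gamma}\longrightarrow\mmax_R$ is \emph{closed}, so the image of the closed set $\inscrit(T_\gamma)$ is closed; this closedness is asserted without proof (it can be justified by general spectral-space theory: $\Spec(T_\gamma)^\cons\longrightarrow\Spec(R)^\cons$ is a continuous map of compact Hausdorff spaces, hence closed, and in dimension one the spectrum and the maximal spectrum differ only by the generic point). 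You instead prove the equivalent dual statement --- continuity of the extension bijection $\rho\colon P\mapsto PT_\gamma$, whose inverse is exactly the restriction map --- by an explicit computation: subbasic open sets $V_{T_\gamma}(z)$ pull back to $\{P\in\inscrit_\gamma(R)\mid v_P(z)\geq 1\}$ via the identity $PT_\gamma=PR_P\cap T_\gamma$ (correctly justified by maximality of $PT_\gamma$, which uses the inductive hypothesis (a) at level $\gamma$), and around each point this pullback contains the clopen neighbourhood $D(t)\cap V(s)\cap\inscrit_\gamma(R)$ obtained by writing $z=s/t$. This is sound, and what it buys is self-containedness: no appeal to closedness of spectral maps in the constructible topology, together with a transparent verification that the possible non-compactness of $\mmax$ (the feature distinguishing this setting from \cite{HK-Olb-Re}) causes no harm. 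One small caveat: your remark that the compactness shortcut is ``unavailable'' slightly overstates matters, since the paper's closedness claim can still be obtained without compactness of $\mmax$ by working in the compact Hausdorff space $\Spec(R)^\cons$ and passing to the maximal spectrum afterwards.
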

\begin{proof}
We proceed by induction. If $\alpha=0$, then $\inscrit_0(R)=\mmax$ and the claim is obvious; if $\alpha=1$, then $\inscrit_1(R)=\inscrit(R)$ is closed by Proposition \ref{prop:crit-closed}; thus $\inscrit(R)$ is the image of $\mmax_{T_1}$ by Lemma \ref{lemma:closed-intersect}.

Suppose that the three claims hold for every $\lambda<\alpha$. If $\alpha$ is a limit ordinal, then $\inscrit_\alpha(R)=\bigcap_{\lambda<\alpha}\inscrit_\lambda(R)$ is closed, and thus also $\inscrit_\alpha(R)$ is closed; the other claims follow from Lemma \ref{lemma:closed-intersect} and the definition of $T_\alpha$. If $\alpha=\gamma+1$ is a successor ordinal, then by definition $P\in\inscrit_\alpha(R)$ if and only if $PT_\gamma\in\inscrit(T_\gamma)$; therefore, the restriction map $\mmax_{T_\gamma}\longrightarrow\mmax_R$ sends $\inscrit(T_\gamma)$ to $\inscrit_{\gamma+1}(R)=\inscrit_\alpha(R)$. By the case $\alpha=1$, $\inscrit(T_\gamma)$ is closed in $\mmax_{T_\gamma}$; since the restriction map is closed, it follows that $\inscrit_\alpha(R)$ is closed in $\mmax_R$. The other two claims follow again from Lemma \ref{lemma:closed-intersect}. 
\end{proof}

\begin{lemma}\label{lemma:inscrit-deriv}
Let $R$ be an almost Dedekind domain. Then, $\inscrit_\alpha(R)\subseteq\deriv^\alpha(\mmax)$ for every ordinal $\alpha$.
\end{lemma}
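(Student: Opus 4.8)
The plan is to argue by transfinite induction on $\alpha$, with the successor step carrying all the weight. For $\alpha=0$ we have $\inscrit_0(R)=\mmax=\deriv^0(\mmax)$, and the case $\alpha=1$ is exactly Proposition \ref{prop:crit-deriv}. For a limit ordinal $\alpha$, the inductive hypothesis together with the two defining formulas at limit stages gives $\inscrit_\alpha(R)=\bigcap_{\gamma<\alpha}\inscrit_\gamma(R)\subseteq\bigcap_{\gamma<\alpha}\deriv^\gamma(\mmax)=\deriv^\alpha(\mmax)$, so this case is immediate.

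The heart of the argument is the successor case $\alpha=\gamma+1$. Suppose $P\in\inscrit_{\gamma+1}(R)$, i.e. $PT_\gamma\in\inscrit(T_\gamma)$; in particular $PT_\gamma\neq T_\gamma$, so $P\in\inscrit_\gamma(R)$ by Lemma \ref{lemma:incritalpha}(a), and hence $P\in\inscrit_\gamma(R)\subseteq\deriv^\gamma(\mmax)$ by the inductive hypothesis. I want to promote this to $P\in\deriv(\deriv^\gamma(\mmax))=\deriv^{\gamma+1}(\mmax)$, that is, to show that $P$ is not isolated in $\deriv^\gamma(\mmax)$. Applying Proposition \ref{prop:crit-deriv} to the almost Dedekind domain $T_\gamma$ yields $\inscrit(T_\gamma)\subseteq\deriv(\mmax_{T_\gamma})$, so $PT_\gamma$ is not isolated in $\mmax_{T_\gamma}$.

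To transfer this non-isolation from $\mmax_{T_\gamma}$ down to $\mmax_R$, I would use that, by Lemma \ref{lemma:incritalpha}(b) and Lemma \ref{lemma:closed-intersect}, the contraction (restriction) map $\iota\colon\mmax_{T_\gamma}\to\mmax_R$ is a bijection onto the closed set $\inscrit_\gamma(R)$ sending $PT_\gamma$ to $P$. Since on maximal spectra the inverse topology coincides with the constructible topology and contraction maps are continuous for the constructible topology, $\iota$ is a continuous bijection onto $\inscrit_\gamma(R)$ equipped with its subspace inverse topology. Consequently, if $P$ were isolated in $\inscrit_\gamma(R)$, then $\{P\}$ would be open there and $\iota^{-1}(\{P\})=\{PT_\gamma\}$ would be open in $\mmax_{T_\gamma}$, contradicting the non-isolation of $PT_\gamma$; hence $P$ is not isolated in $\inscrit_\gamma(R)$. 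Finally, a point that is a limit point of a subset is a fortiori a limit point of any larger set, so the inclusion $\inscrit_\gamma(R)\subseteq\deriv^\gamma(\mmax)$ shows that $P$ is not isolated in $\deriv^\gamma(\mmax)$, i.e. $P\in\deriv^{\gamma+1}(\mmax)$, which completes the induction.

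The main obstacle I anticipate is precisely this topological transfer step: making rigorous that the restriction map $\mmax_{T_\gamma}\to\inscrit_\gamma(R)$ is continuous (indeed a homeomorphism) for the inverse topology, so that isolated points correspond under $\iota$. Here the identification of $\inscrit_\gamma(R)$ with $\mmax_{T_\gamma}$ furnished by Lemma \ref{lemma:incritalpha}, together with the compatibility of contraction with the constructible topology on maximal spaces, does the essential work; everything else is the formal bookkeeping of transfinite induction and the elementary fact about limit points of nested sets.
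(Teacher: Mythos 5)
Your proof is correct and follows essentially the same route as the paper: transfinite induction in which the successor step applies Proposition \ref{prop:crit-deriv} to $T_\gamma$ and transfers non-isolation along the restriction map $\mmax_{T_\gamma}\to\inscrit_\gamma(R)$ furnished by Lemma \ref{lemma:incritalpha}, then uses monotonicity of derived sets and the inductive hypothesis. The only (harmless) difference is that you phrase the transfer pointwise and invoke only continuity of the contraction map, whereas the paper states the stronger fact that this map is a homeomorphism onto $\inscrit_\gamma(R)$ and argues with images of derived sets.
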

\begin{proof}
By induction on $\alpha$. If $\alpha=1$ the claim is exactly Proposition \ref{prop:crit-deriv}. If $\alpha=\gamma+1$ is a successor ordinal, by Lemma \ref{lemma:incritalpha} the restriction map $\theta:\mmax_{T_\gamma}\longrightarrow\mmax_R$ establishes a homeomorphism between $\mmax_{T_\gamma}$ and $\inscrit_\gamma(R)$; therefore, by induction,
\begin{equation*}
\begin{aligned}
\inscrit_\alpha(R)& =\theta(\inscrit(T_\gamma))\subseteq\theta(\deriv(\mmax_{T_\gamma}))=\\
&=\deriv(\inscrit_\gamma(R))\subseteq\deriv(\deriv^\gamma(\mmax))=\deriv^\alpha(R).
\end{aligned}
\end{equation*}

If $\alpha$ is a limit ordinal then, by induction,
\begin{equation*}
\inscrit_\alpha(R)=\bigcap_{\beta<\alpha}\inscrit_\beta(R)\subseteq\bigcap_{\beta<\alpha}\deriv^\beta(\mmax)=\deriv^\alpha(\mmax).
\end{equation*}
The claim is proved.
\end{proof}

\begin{prop}\label{prop:scat-SPscat}
Let $R$ be an almost Dedekind domain. Then, the following hold.
\begin{enumerate}[(a)]
\item\label{prop:scat-SPscat:count} If $\mmax$ is countable, then it is scattered.
\item\label{prop:scat-SPscat:scat} If $\mmax$ is scattered, then $R$ is SP-scattered.
\end{enumerate}
\end{prop}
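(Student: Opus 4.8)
The plan is to prove the two statements separately. Part \ref{prop:scat-SPscat:scat} is essentially immediate from Lemma \ref{lemma:inscrit-deriv}, while part \ref{prop:scat-SPscat:count} is the substantive one and rests on a topological input that is not stated earlier, namely the \emph{local compactness} of $\mmax$; once that is in place, the Baire category theorem closes the argument.

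For part \ref{prop:scat-SPscat:scat}, I would argue as follows. Assume $\mmax$ is scattered, so that $\deriv^\alpha(\mmax)=\emptyset$ for some ordinal $\alpha$. By Lemma \ref{lemma:inscrit-deriv}, $\inscrit_\alpha(R)\subseteq\deriv^\alpha(\mmax)=\emptyset$, hence $\inscrit_\alpha(R)=\emptyset$. It then follows from the definition of $T_\alpha$ (with the convention that the empty intersection is all of $K$) that $T_\alpha=K$, so $\inscrit_{\alpha+1}(R)=\emptyset=\inscrit_\alpha(R)$ and the chain has stabilized at the empty set. Consequently the value of the chain at the SP-rank is empty, which is exactly the condition that $R$ be SP-scattered. (If one prefers, the monotonicity $\inscrit_{\gamma+1}(R)\subseteq\inscrit_\gamma(R)$, coming from Lemma \ref{lemma:incritalpha} together with $\inscrit(T_\gamma)\subseteq\mmax_{T_\gamma}$, makes the chain decreasing, so reaching $\emptyset$ forces the stable value to be $\emptyset$.)

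For part \ref{prop:scat-SPscat:count}, the first step is to show that $\mmax$ is locally compact. Given $M\in\mmax$, I would choose a nonzero $a\in M$ (which exists since $M\neq(0)$). Then $(a)$ is principal, so $D((a))$ is quasi-compact and open in the Zariski topology; hence $D((a))$ is clopen in $\Spec(R)^\cons$, and therefore $V((a))$ is a clopen subset of the compact space $\Spec(R)^\cons$, so it is compact. Since $a\neq 0$ gives $(0)\notin V((a))$, we have $V((a))\subseteq\mmax$, and because the constructible and inverse topologies agree on $\Max(R)$, $V((a))$ is a compact clopen neighborhood of $M$ in $\mmax$. Thus $\mmax$ is a locally compact, Hausdorff, zero-dimensional space.

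The final step is a Baire category argument applied to the perfect kernel $K:=\bigcap_\alpha\deriv^\alpha(\mmax)$; showing $K=\emptyset$ is equivalent to scatteredness. Here $K$ is closed in $\mmax$, hence itself locally compact and Hausdorff, and therefore a Baire space. If $K$ were nonempty, then, $\mmax$ being countable, $K$ would be a countable nonempty $T_1$ Baire space; writing $K=\bigcup_{x\in K}\{x\}$ expresses $K$ as a countable union of closed sets, each of which is nowhere dense since $K=\deriv(K)$ has no isolated points, contradicting that a nonempty Baire space is not meager in itself. Hence $K=\emptyset$ and $\mmax$ is scattered. The point I expect to be the main obstacle is exactly the avoidance of the tempting but false claim that every countable Hausdorff (even zero-dimensional) space is scattered: $\mathbf{Q}$ is a counterexample, and it is precisely the local compactness established above that rules out such behavior for $\mmax$.
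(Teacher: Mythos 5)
Your proof is correct, and part \ref{prop:scat-SPscat:scat} coincides with the paper's argument: both invoke Lemma \ref{lemma:inscrit-deriv} to get $\inscrit_\alpha(R)\subseteq\deriv^\alpha(\mmax)=\emptyset$ (your extra remark that the chain $\{\inscrit_\alpha(R)\}_\alpha$ is decreasing, so that its value at the SP-rank is then also empty, is a point the paper glosses over in the same way). For part \ref{prop:scat-SPscat:count}, however, you take a genuinely different route. The paper's proof is a two-line reduction: since $R$ is one-dimensional, $\Spec(R)=\mmax\cup\{(0)\}$ is countable when $\mmax$ is, and $\Spec(R)^\cons$ is compact Hausdorff; by the Mazurkiewicz--Sierpi\'nski theorem every countable compact Hausdorff space is scattered, and scatteredness passes to the subspace $\mmax$. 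You instead prove that $\mmax$ itself is locally compact --- for $0\neq a\in M$ the set $V((a))$ is a compact clopen neighborhood of $M$ lying inside $\Max(R)$, one-dimensionality being what keeps $(0)$ out of $V((a))$ --- and then run the Baire-category argument on the perfect kernel directly; in substance you are inlining, in its locally compact version, the proof of the classical theorem that the paper cites. The paper's route buys brevity, and its compactification by the single generic point handles the case $\Jac(R)=0$ (where $\mmax$ itself fails to be compact) at no extra cost; your route buys self-containedness plus an explicit auxiliary fact (local compactness of $\mmax$) that the paper never records and that is of independent interest. Your closing caveat is also exactly the right sanity check: countability alone does not suffice (the rationals $\mathbf{Q}$ are countable, metrizable and zero-dimensional but not scattered), so some completeness-type input --- your local compactness, or the paper's compactness of $\Spec(R)^\cons$ --- is genuinely necessary.
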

\begin{proof}
\ref{prop:scat-SPscat:count} If $\mmax$ is countable, then so is $\Spec(R)$. As a compact Hausdorff countable space, $\Spec(R)^\cons$ is scattered \cite{mazur-sierp-numerabili}, and a subspace of a scattered space is scattered; hence $\mmax$ is scattered.

\ref{prop:scat-SPscat:scat} Since $\mmax$ is scattered, there is an ordinal $\alpha$ such that $\deriv^\alpha(\mmax)$ is empty. By Lemma \ref{lemma:inscrit-deriv}, $\inscrit_\alpha(R)\subseteq\deriv^\alpha(\mmax)$, and thus $\inscrit_\alpha(R)$ is empty; in particular, $R$ is SP-scattered.
\end{proof}

\begin{comment}
\begin{prop}\label{prop:SP-scat-finiterank}
Let $R$ be an SP-scattered almost Dedekind domain with finite SP-rank $k$. Then,
\begin{equation*}
\Inv(R)\simeq\bigoplus_{i=0}^k\funcontcomp(\inscrit_i(R)\setminus\inscrit_{i+1}(R),\insZ)
\end{equation*}
in particular, $\Inv(R)$ is a free abelian group.
\end{prop}
\begin{proof}
By induction on $k$. If $k=1$, then $R$ is an SP-domain, and $\Inv(R)\simeq\funcontcomp(\mmax,\insZ)$ by Corollary \ref{cor:SP-inv}, and the claim follows since $\mmax=\inscrit_0(R)\setminus\emptyset=\inscrit_0(R)\setminus\inscrit_1(R)$.

Suppose the claim holds up to $k-1$, and consider $A:=T_1$. Then, $\inscrit_t(A)=\inscrit_{t+1}(R)$, and thus $A$ is SP-scattered of SP-rank $k-1$. By induction,
\begin{equation*}
\Inv(A)\simeq\bigoplus_{i=0}^{k-1}\funcontcomp(\inscrit_i(A)\setminus\inscrit_{i+1}(A),\insZ)=\bigoplus_{i=1}^{k}\funcontcomp(\inscrit_i(R)\setminus\inscrit_{i+1}(R),\insZ),
\end{equation*}
which is free by \cite[Satz 1]{nobeling}, since all functions in $\funcontcomp(\inscrit_i(R)\setminus\inscrit_{i+1}(R),\insZ)$ are bounded. Moreover, we have an exact sequence
\begin{equation*}
0\longrightarrow\sigcap(R)\longrightarrow\Inv(R)\longrightarrow\Inv(A)\longrightarrow 0.
\end{equation*}
Since $\Inv(A)$ is free, the sequence splits, and thus $\Inv(R)\simeq\Inv(A)\oplus\sigcap(R)$; the claim now follows from induction and from Proposition \ref{prop:sigcap-iso}.
\end{proof}
\end{comment}

When $R$ has finite rank, applying finitely many times the reasoning after Proposition \ref{prop:sgmi-inv} we will get the zero module, and thus an isomorphism $\Inv(T_n)\simeq\funcontcomp(\mmax_{T_n},\insZ)=\funcontcomp(\inscrit_n(R),\insZ)$; using its freeness, we can pull back this isomorphism to get a decomposition
\begin{equation*}
\Inv(R)\simeq\bigoplus_{i=0}^n\funcontcomp(\inscrit_i(R)\setminus\inscrit_{i+1}(R),\insZ).
\end{equation*}
We now want to prove this result to arbitrary SP-scattered domain, and for this we need two group-theoretic lemmas.
\begin{lemma}\label{lemma:unionesgr}
Let $G$ be an abelian group. Let $\{G_\lambda\}_{\lambda\in\Lambda}$ be a well-ordered set of ascending chain of subgroups of $G$ such that:
\begin{itemize}
\item each $G_\lambda$ is free;
\item for all $\lambda$, $G_{\lambda+1}\simeq G_\lambda\oplus H_\lambda$ for some subgroup $H_\lambda$;
\item if $\lambda$ is a limit ordinal, then $G_\lambda=\bigcup_{\alpha<\lambda}G_\alpha$;
\item $G=\bigcup_\lambda G_\lambda$.
\end{itemize}
Then, $\displaystyle{G\simeq G_0\oplus\bigoplus_{\lambda\in\Lambda} H_\lambda}$, and in particular it is free.
\end{lemma}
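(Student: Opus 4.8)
The plan is to prove, by transfinite induction on $\mu\in\Lambda$, that each $G_\mu$ decomposes as an \emph{internal} direct sum $G_\mu=G_0\oplus\bigoplus_{\lambda<\mu}C_\lambda$, where $C_\lambda$ is a concrete complement of $G_\lambda$ sitting inside $G_{\lambda+1}$. First I would sharpen the splitting hypothesis: the condition $G_{\lambda+1}\simeq G_\lambda\oplus H_\lambda$ is to be read as saying that $G_\lambda$ is a direct summand of $G_{\lambda+1}$ with complement isomorphic to $H_\lambda$ (this is exactly what the intended application produces, since the sequence $0\to G_\lambda\to G_{\lambda+1}\to H_\lambda\to 0$ splits once $H_\lambda$ is free). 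Thus for every $\lambda$ I fix a subgroup $C_\lambda\subseteq G_{\lambda+1}$ with $G_{\lambda+1}=G_\lambda\oplus C_\lambda$ and $C_\lambda\simeq H_\lambda$; being a direct summand of the free group $G_{\lambda+1}$, each $C_\lambda$ is itself free.

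Now I carry out the induction. The base case $\mu=0$ is the empty decomposition $G_0=G_0$. For a successor $\mu=\nu+1$, the induction hypothesis $G_\nu=G_0\oplus\bigoplus_{\lambda<\nu}C_\lambda$ together with $G_{\nu+1}=G_\nu\oplus C_\nu$ gives immediately $G_{\nu+1}=G_0\oplus\bigoplus_{\lambda<\nu+1}C_\lambda$. The limit case is the heart of the argument. When $\mu$ is a limit ordinal, I would first check that $G_0$ and the $C_\lambda$ with $\lambda<\mu$ generate $G_\mu$: any $g\in G_\mu=\bigcup_{\nu<\mu}G_\nu$ lies in some $G_\nu$ with $\nu<\mu$, and the induction hypothesis already expresses $g$ inside $G_0+\sum_{\lambda<\nu}C_\lambda$. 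For directness, suppose a finite sum $g_0+\sum_\lambda c_\lambda$ (with $g_0\in G_0$ and $c_\lambda\in C_\lambda$) vanishes; since only finitely many indices occur and $\mu$ is a limit ordinal, these indices admit a common strict upper bound $\nu<\mu$, so the entire relation lives in $G_\nu$, where the induction hypothesis forces $g_0=0$ and every $c_\lambda=0$.

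Passing to the union over all $\mu\in\Lambda$ then yields $G=\bigcup_\mu G_\mu=G_0\oplus\bigoplus_{\lambda\in\Lambda}C_\lambda\simeq G_0\oplus\bigoplus_{\lambda\in\Lambda}H_\lambda$, and since $G_0$ and all the $C_\lambda$ are free, this group is free. The main obstacle I anticipate is precisely the directness check at limit stages: one must guarantee that an arbitrary finite linear relation among the $C_\lambda$ can be ``captured'' inside a single $G_\nu$ with $\nu<\mu$, which is exactly where the hypotheses $G_\mu=\bigcup_{\alpha<\mu}G_\alpha$ and the well-ordering of $\Lambda$ (providing a strict bound below a limit for any finite family of smaller indices) are indispensable; the generation step and the successor step are routine by comparison.
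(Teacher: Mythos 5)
Your proof is correct, but note that the paper does not actually argue this lemma at all: its entire proof is the single line that the statement is equivalent to Fuchs, Chapter 3, Lemma 7.3. What you wrote is therefore a self-contained proof of the cited result, and it is essentially the standard one (the paper's source even contains a commented-out draft running the same transfinite induction with bases of $G_0$ and of the $H_\alpha$ in place of your complements $C_\lambda$; the two bookkeeping devices are interchangeable). The one place where you genuinely add something --- and it should be stressed that this is not an optional ``sharpening'' --- is your reinterpretation of the hypothesis $G_{\lambda+1}\simeq G_\lambda\oplus H_\lambda$ as an \emph{internal} splitting $G_{\lambda+1}=G_\lambda\oplus C_\lambda$. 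Read literally as an abstract isomorphism, the lemma is false: take $G_n:=2^{-n}\insZ$ ($n<\omega$) inside the additive group of rationals; each $G_n$ is free, $G_{n+1}\simeq G_n\oplus 0$ for the zero subgroup, there are no limit ordinals among the indices, yet $\bigcup_n G_n=\insZ[1/2]$ is not free, let alone isomorphic to $G_0$. So the internal reading is forced; it is what Fuchs' formulation actually says, and it is what the paper's application supplies, since in Proposition \ref{prop:Kalpha} the splitting of the exact sequence $0\to K_\gamma\to K_\alpha\to K_{\gamma,\alpha}\to 0$ (with $K_{\gamma,\alpha}$ free) produces a true internal complement of $K_\gamma$ inside $K_\alpha$. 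With that reading fixed, your successor step, your limit step (bounding the finitely many indices of a relation strictly below the limit ordinal), and the final, identical, generation-plus-directness check for $G=\bigcup_\mu G_\mu$ are all correct.
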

\begin{proof}
The statement is equivalent to \cite[Chapter 3, Lemma 7.3]{fuchs-abeliangroups}.
%Each $H_\lambda$ is free, since it is a subgroup of the free abelian group $G_{\lambda+1}$; thus, for each $\lambda$ be can find a basis $\mathcal{B}_\lambda$ of $H_\lambda$, and we can also find a basis $\mathcal{B}^0$ of $G_0$. Then, it is easy to see that $\mathcal{B}^0\cup\bigcup_\lambda\mathcal{B}_\lambda$ is a basis of $G$, and thus $G$ is a free abelian group.
\end{proof}

\begin{lemma}\label{lemma:exseq-kernel}
Let $G_1,G_2,G_3$, and let $\phi_1:G_1\longrightarrow G_2$ and $\phi_2:G_2\longrightarrow G_3$ be surjective maps. Then, there is an exact sequence
\begin{equation*}
0\longrightarrow\ker\phi_1\longrightarrow\ker(\phi_2\circ\phi_1)\xrightarrow{~{\phi_1}~} \ker\phi_2\longrightarrow 0
\end{equation*}
\end{lemma}
\begin{proof}
Immediate.
\end{proof}

\begin{prop}\label{prop:Kalpha}
Let $R$ be an almost Dedekind domain, and let $\alpha$ be an ordinal. Then, there is an exact sequence
\begin{equation*}
0\longrightarrow\bigoplus_{i<\alpha}\funcontcomp(\inscrit_i(R)\setminus\inscrit_{i+1}(R),\insZ)\longrightarrow \Inv(R)\longrightarrow\Inv(T_\alpha)\longrightarrow 0.
\end{equation*}
\end{prop}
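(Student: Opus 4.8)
The plan is to prove the statement by transfinite induction on $\alpha$, by identifying the kernel of the extension map $e_\alpha\colon\Inv(R)\longrightarrow\Inv(T_\alpha)$. First I would fix the setup. Since $R$ is Pr\"ufer, $e_\alpha$ is surjective, and by Lemma \ref{lemma:closed-intersect} the maximal ideals of $T_\alpha$ are the extensions of the ideals in $\inscrit_\alpha(R)$. Arguing exactly as in Proposition \ref{prop:sgmi-inv}, one has $IT_\alpha=T_\alpha$ if and only if $\nu_I$ vanishes on $\inscrit_\alpha(R)$, i.e. (using Proposition \ref{prop:suppnuI}\ref{prop:suppnuI:invt}) if and only if $\supp\nu_I\cap\inscrit_\alpha(R)=\emptyset$. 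Thus, under $\Psi$, the kernel is $G_\alpha:=\{\nu_I\mid\supp\nu_I\cap\inscrit_\alpha(R)=\emptyset\}$, and since $e_\alpha$ is onto it suffices to prove $G_\alpha\simeq\bigoplus_{i<\alpha}\funcontcomp(\inscrit_i(R)\setminus\inscrit_{i+1}(R),\insZ)$.

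The key local step, valid for \emph{every} ordinal $\gamma$, is the successor splitting. I would first show $T_{\gamma+1}=\bigcap\{(T_\gamma)_Q\mid Q\in\inscrit(T_\gamma)\}$, i.e. that $T_{\gamma+1}$ is obtained from $T_\gamma$ by exactly the one-step construction underlying Proposition \ref{prop:sgmi-inv}. This follows from Lemma \ref{lemma:incritalpha} (which makes the restriction map a homeomorphism $\mmax_{T_\gamma}\simeq\inscrit_\gamma(R)$ carrying $\inscrit(T_\gamma)$ onto $\inscrit_{\gamma+1}(R)\subseteq\inscrit_\gamma(R)$) together with the localization identity $R_P=(T_\gamma)_{PT_\gamma}$ for $P\in\inscrit_\gamma(R)$. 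Applying Propositions \ref{prop:sgmi-inv} and \ref{prop:sigcap-iso} to the almost Dedekind domain $T_\gamma$ then identifies the kernel of $\Inv(T_\gamma)\longrightarrow\Inv(T_{\gamma+1})$ with $\sigcap(T_\gamma)\simeq\funcontcomp(\mmax_{T_\gamma}\setminus\inscrit(T_\gamma),\insZ)\simeq\funcontcomp(\inscrit_\gamma(R)\setminus\inscrit_{\gamma+1}(R),\insZ)$, which is free. Factoring $e_{\gamma+1}$ as $\Inv(R)\longrightarrow\Inv(T_\gamma)\longrightarrow\Inv(T_{\gamma+1})$ and invoking Lemma \ref{lemma:exseq-kernel} yields a short exact sequence $0\to G_\gamma\to G_{\gamma+1}\to\funcontcomp(\inscrit_\gamma(R)\setminus\inscrit_{\gamma+1}(R),\insZ)\to 0$; as the right-hand term is free, this splits, giving $G_{\gamma+1}\simeq G_\gamma\oplus\funcontcomp(\inscrit_\gamma(R)\setminus\inscrit_{\gamma+1}(R),\insZ)$.

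With the local step in hand, the induction is straightforward: the base case $\alpha=0$ is trivial, since $T_0=R$ forces $G_0=0$ and the direct sum is empty, while the successor case is precisely the splitting above combined with the inductive hypothesis. For a limit ordinal $\alpha$ I would apply Lemma \ref{lemma:unionesgr} to the chain $\{G_\lambda\}_{\lambda<\alpha}$, which is ascending because $\{\inscrit_\lambda(R)\}$ is $\subseteq$-decreasing in $\lambda$; freeness of each $G_\lambda$ and the splittings $G_{\lambda+1}\simeq G_\lambda\oplus H_\lambda$ with $H_\lambda=\funcontcomp(\inscrit_\lambda(R)\setminus\inscrit_{\lambda+1}(R),\insZ)$ are provided by the inductive hypothesis and the local step. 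The one genuinely topological point, and the step I expect to be the main obstacle, is the chain-continuity condition $G_\mu=\bigcup_{\nu<\mu}G_\nu$ at limit $\mu\leq\alpha$. Here I would use that $\supp\nu_I$ is compact (Proposition \ref{prop:suppnuI}\ref{prop:suppnuI:compact}) while the $\inscrit_\nu(R)$ form a descending chain of closed subsets of $\mmax$ (Lemma \ref{lemma:incritalpha}) with $\inscrit_\mu(R)=\bigcap_{\nu<\mu}\inscrit_\nu(R)$: if $\supp\nu_I\cap\inscrit_\mu(R)=\emptyset$, then the nested closed subsets $\supp\nu_I\cap\inscrit_\nu(R)$ of the compact space $\supp\nu_I$ have empty total intersection, so by the finite intersection property one of them is already empty, i.e. $\nu_I\in G_\nu$ for some $\nu<\mu$. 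Lemma \ref{lemma:unionesgr} then gives $G_\alpha\simeq G_0\oplus\bigoplus_{\lambda<\alpha}\funcontcomp(\inscrit_\lambda(R)\setminus\inscrit_{\lambda+1}(R),\insZ)$, and since $G_0=0$ this is the desired isomorphism, whence the exact sequence.
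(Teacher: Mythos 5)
Your proof is correct and follows essentially the same route as the paper's: identify the kernel of the extension map $\Inv(R)\longrightarrow\Inv(T_\alpha)$ as $\{I\mid\supp\nu_I\cap\inscrit_\alpha(R)=\emptyset\}$, then induct transfinitely, using Lemma \ref{lemma:exseq-kernel} together with Proposition \ref{prop:sigcap-iso} and freeness-splitting at successor ordinals, and Lemma \ref{lemma:unionesgr} at limit ordinals. Your finite-intersection-property argument at limit stages is just the closed-set dual of the paper's open-cover compactness argument on $\supp\nu_I$, and your extra verification that $T_{\gamma+1}$ arises from $T_\gamma$ by the one-step construction merely makes explicit what the paper leaves implicit.
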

\begin{proof}
Since $R$ is Pr\"ufer, the extension map $I\mapsto IT$ is a surjective homomorphism from $\Inv(R)$ to $\Inv(T_\alpha)$, with kernel $K_\alpha:=\{I\in\Inv(R)\mid IT=T\}=\{I\in\Inv(R)\mid \supp(\nu_I)\cap\inscrit_\alpha(R)=\emptyset\}$. We need to show that $K_\alpha\simeq\bigoplus_{i<\alpha}\funcontcomp(\inscrit_i(R)\setminus\inscrit_{i+1}(R),\insZ)$, and we do so by induction on $\alpha$.

If $\alpha=0$ there is nothing to prove. If $\alpha=1$, the claim is exactly Proposition \ref{prop:sigcap-iso}. Suppose now that the claim holds for all $\lambda<\alpha$.

If $\alpha=\gamma+1$ is a successor ordinal, then the surjective map $\Inv(R)\longrightarrow\Inv(T_\alpha)$ factors through $\Inv(T_\gamma)$; by Lemma \ref{lemma:exseq-kernel}, we have an exact sequence
\begin{equation}\label{eq:Kseq}
0\longrightarrow K_\gamma\longrightarrow K_\alpha\longrightarrow K_{\gamma,\alpha}\longrightarrow 0,
\end{equation}
where $K_{\gamma,\alpha}$ is the kernel of $\Inv(T_\gamma)\longrightarrow\Inv(T_\alpha)$. By Proposition \ref{prop:sigcap-iso}, 
\begin{equation*}
K_{\gamma,\alpha}\simeq\funcontcomp(\mmax_{T_\gamma}\setminus\inscrit(T_\gamma),\insZ)\simeq \funcontcomp(\inscrit_\gamma(R)\setminus\inscrit_\alpha(R),\insZ),
\end{equation*}
which is free by \cite[Satz 1]{nobeling}; hence, \eqref{eq:Kseq} splits as $K_\alpha\simeq K_{\gamma,\alpha}\oplus K_\gamma$, and the claim now follows by induction.

Suppose now that $\alpha$ is a limit ordinal. Consider the sequence $\{K_\lambda\}_{\lambda<\alpha}$: by induction, it is an ascending chain of free subgroups of $K_\alpha$, and it satisfies the two middle conditions of Lemma \ref{lemma:unionesgr} with $H_\lambda=\funcontcomp(\inscrit_\lambda(R)\setminus\inscrit_{\lambda+1}(R),\insZ)$. We show that $K_\alpha=\bigcup_{\lambda<\alpha}K_\lambda$. 

Indeed, suppose $I\in K_\alpha$: then,
\begin{equation*}
\supp\nu_I\subseteq\mmax\setminus\inscrit_\alpha(R)=\bigcup_{\lambda<\alpha}(\mmax\setminus\inscrit_\lambda(R))
\end{equation*}
since $\alpha$ is a limit ordinal. Since each $\inscrit_\lambda(R)$ is closed in the inverse topology (Lemma \ref{lemma:incritalpha}), $\{\mmax\setminus\inscrit_\lambda(R)\}_{\lambda<\alpha}$ is an open cover of the compact set $\supp\nu_I$, and since $\{\mmax\setminus\inscrit_\lambda(R)\}_{\lambda<\alpha}$ is also a chain it means that there is a $\overline{\lambda}$ such that $\supp\nu_I\subseteq \mmax\setminus\inscrit_{\overline{\lambda}}(R)$. Hence, $I\in K_{\overline{\lambda}}$, and so $K_\alpha$ is the union of the $K_\lambda$. From Lemma \ref{lemma:unionesgr} we now get that $K_\alpha$ is free and that it has the claimed decomposition.
\end{proof}

\begin{teor}\label{teor:SP-scattered}
Let $R$ be an SP-scattered almost Dedekind domain with SP-rank $\alpha$. Then,
\begin{equation*}
\Inv(R)\simeq\bigoplus_{i<\alpha}\funcontcomp(\inscrit_i(R)\setminus\inscrit_{i+1}(R),\insZ);
\end{equation*}
in particular, $\Inv(R)$ is free.
\end{teor}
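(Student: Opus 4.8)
The plan is to read the statement off directly from the exact sequence of Proposition \ref{prop:Kalpha}, specialized to the terminal ordinal of the critical chain. First I would invoke that proposition with the ordinal taken to be the SP-rank $\alpha$ of $R$, which produces the exact sequence
\begin{equation*}
0\longrightarrow\bigoplus_{i<\alpha}\funcontcomp(\inscrit_i(R)\setminus\inscrit_{i+1}(R),\insZ)\longrightarrow \Inv(R)\longrightarrow\Inv(T_\alpha)\longrightarrow 0.
\end{equation*}
All the structural work—the successor step via the splitting of \eqref{eq:Kseq} and the limit step via Lemma \ref{lemma:unionesgr}—has already been absorbed into the proof of this proposition, so the theorem should reduce to identifying the rightmost term.

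Next I would exploit the defining property of SP-scatteredness. Since $R$ is SP-scattered with SP-rank $\alpha$, by definition $\inscrit_\alpha(R)=\emptyset$, equivalently $T_\alpha=K$, the quotient field of $R$. A field has only the fractional ideals $(0)$ and $K$, so its unique invertible ideal is $K$ itself and $\Inv(T_\alpha)=\Inv(K)=0$. The third term of the exact sequence thus vanishes, forcing the middle map to be an isomorphism and yielding exactly
\begin{equation*}
\Inv(R)\simeq\bigoplus_{i<\alpha}\funcontcomp(\inscrit_i(R)\setminus\inscrit_{i+1}(R),\insZ).
\end{equation*}

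For the ``in particular'' clause I would argue as follows. Each summand $\funcontcomp(\inscrit_i(R)\setminus\inscrit_{i+1}(R),\insZ)$ consists of continuous functions of compact support, which are bounded by Lemma \ref{lemma:cont->bound}; hence each summand embeds in a group of bounded functions and is therefore free by \cite[Satz 1]{nobeling}. Since a direct sum of free abelian groups is again free, $\Inv(R)$ is free.

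I do not anticipate a genuine obstacle in this final step: the substance of the argument lives entirely in Proposition \ref{prop:Kalpha}, and the theorem is merely its evaluation at the ordinal where the critical chain reaches the empty set, using only the triviality of $\Inv(K)$. The one point to state carefully is the equivalence $\inscrit_\alpha(R)=\emptyset\iff T_\alpha=K$ (with the empty intersection $\bigcap\{R_P\mid P\in\emptyset\}$ understood as $K$), which is recorded in the definition of SP-scattered and is what guarantees the exact sequence degenerates in precisely the way needed.
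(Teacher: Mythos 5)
Your proposal is correct and follows essentially the same route as the paper: invoke Proposition \ref{prop:Kalpha} at the SP-rank $\alpha$, use the defining property $T_\alpha=K$ to conclude $\Inv(T_\alpha)=(0)$, and identify $\Inv(R)$ with the kernel term, which is the stated direct sum; freeness then follows as you say (the paper leaves the freeness of the summands implicit, having already established it inside the proof of Proposition \ref{prop:Kalpha} via Lemma \ref{lemma:cont->bound} and N\"obeling's theorem). No gaps.
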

\begin{proof}
By definition, $T_\alpha=K$ and so $\Inv(T_\alpha)=(0)$. Hence, $\Inv(R)$ is homeomorphic to $K_\alpha$, the kernel of the extension map $\Inv(R)\longrightarrow\Inv(T_\alpha)$. The claim now follows from Proposition \ref{prop:Kalpha}.
\end{proof}

\begin{oss}
The isomorphism in Theorem \ref{teor:SP-scattered} is not canonical. Indeed, suppose that $R$ has a single critical maximal ideal $M$, so that $T_1=R_M$ and $T_2=K$. Let $X:=\mmax\setminus\inscrit(R)$. Then, by the theorem,
\begin{equation*}
\Inv(R)\simeq\funcontcomp(X,\insZ)\oplus\funcontcomp(\inscrit(R),\insZ)= \funcontcomp(X,\insZ)\oplus \nu_I\insZ,
\end{equation*}
where $I$ is an invertible ideal such that $\nu_I$ is not continuous. However, there is no way to choose $I$ canonically; indeed, if $J$ is an invertible ideal with $\nu_J$ continuous, then we also have
\begin{equation*}
\Inv(R)\simeq\funcontcomp(X,\insZ)\oplus\funcontcomp(\inscrit(R),\insZ)= \funcontcomp(X,\insZ)\oplus \nu_{IJ}\insZ,
\end{equation*}
and the first component of a $\nu_L$ will be different in the two decompositions. On the other hand, by Proposition \ref{prop:sigcap-iso}, the \emph{second} component will always be the same, since it coincides with $\nu_L|_{\inscrit(R)}$, i.e., in this case, to $\nu_L(M)$.
\end{oss}

\begin{oss}
If $R$ is not SP-scattered, then the exact sequence of Proposition \ref{prop:Kalpha} is still valid when $\alpha$ is the SP-rank of $R$; in particular, if $\Inv(T_\alpha)$ is free then the sequence is split and $\Inv(R)$ is free. Therefore, to prove that $\Inv(R)$ is free for every almost Dedekind domain $R$, it is enough to prove it for the domains $R$ such that $\inscrit(R)=\Max(R)$.
\end{oss}

\begin{comment}
Finding the sets $\inscrit_\alpha(R)$ is not easy in general, and in particular it may not be easy to decide whether an almost Dedekind domain is SP-scattered. As in the previous section, we can give a sufficient topological condition for this to happen.
\begin{prop}\label{prop:scat->SPscat}
Let $R$ be an almost Dedekind domain. If $\mmax$ is scattered, then $R$ is SP-scattered.
\end{prop}
\begin{proof}

\end{proof}

\begin{cor}
Let $R$ be a Dedekind domain such that $\mmax$ is scattered. Then, $\Inv(R)$ is free.
\end{cor}
\begin{proof}
Immediate from the previous proposition.
\end{proof}
\end{comment}

\section{Divisorial ideals}\label{sect:div}
Let $R$ be a ring. A fractional ideal $I$ of $R$ is said to be \emph{divisorial} if $(R:(R:I))=I$ or, equivalently, if $I$ is an intersection of principal ideals; the set $\Div(R)$ of divisorial ideals contains $\Inv(R)$ and is a monoid under the ``$v$-product'' $I\ast_v J:=(R:(R:IJ))$ \cite[Section 34]{gilmer}. When $R$ is completely integrally closed, $\Div(R)$ is a group \cite[Theorem 34.3]{gilmer}; for example, this happens when $R$ is a one-dimensional Pr\"ufer domain, and in particular when $R$ is an almost Dedekind domain.

In this context, the relationship between $\Div(R)$ and $\Inv(R)$ is better understood in the context of lattice-ordered groups \cite[Section 3]{HK-Olb-Re}. A \emph{lattice-ordered group} (or \emph{$\ell$-group} for short) is a group $(G,+)$ endowed with a partial order $\leq$ such that, for every $x,y\in G$, the infimum $x\wedge y$ and the supremum $x\vee y$ exists, and such that whenever $x,y,g\in G$ and $x\leq y$ then $x+g\leq y+g$. An \emph{$\ell$-homomorphism} of $\ell$-groups is a homomorphism $\phi:G\longrightarrow G'$ such that $\phi(x\wedge y)=\phi(x)\wedge\phi(y)$ for every $x,y\in G$ (equivalently, such that $\phi(x\vee y)=\phi(x)\vee\phi(y)$ for every $x,y\in G$). 

An $\ell$-group $G$ is said to be \emph{complete} if, whenever a set of elements of $G$ is bounded below, it has an infimum; the \emph{completion} of an $\ell$-group $G$ is the smallest complete $\ell$-group $H$ containing $G$ (and such that the inclusion is an $\ell$-homomorphism). Every $\ell$-group has a completion; moreover, if $G$ is an $\ell$-group and $H$ is an $\ell$-group containing $G$, then $H$ is the completion of $G$ if and only if $H$ is complete and, for each $0<h\in H$, there are $g_1,g_2\in H$ such that $0<g_1\leq h\leq g_2$ \cite[Theorem 2.4]{completion-lgroups}. 

The group $\Inv(R)$ of invertible ideals of a Pr\"ufer domain $R$ (so, in particular, of an almost Dedekind domain $R$) is an $\ell$-group if we set that $I\leq J$ if and only if $I\supseteq J$; the infimum $I\wedge J$ is the sum $I+J$, while the supremum $I\vee J$ is the intersection $I\cap J$. If $T$ is an overring of $R$, then the canonical map $\phi:\Inv(R)\longrightarrow\Inv(T)$ is an $\ell$-homomorphism, since $\phi(I+J)=(I+J)T=IT+JT=\phi(I)+\phi(J)$. When $R$ is also one-dimensional, its completion is exactly the group $\Div(R)$ of divisorial ideals of $R$ \cite[Proposition 3.1]{HK-Olb-Re}. 

The group $\inscont(X,\insZ)$ of continuous functions is an $\ell$-group, where the order is the componentwise order; its completion can be expressed in terms of the \emph{Gleason cover} $E_X$ of $X$. A topological space is said to be \emph{extremally disconnected} if the closure of every open set is open. It can be shown that, if $X$ is a regular Hausdorff space, then there is a ``minimal'' extremally disconnected space $E_X$ endowed with a map $j:E_X\longrightarrow X$ that is \emph{perfect} (i.e., closed, continuous and compact) and surjective, and that extremally disconnected spaces are the projective objects in the category of regular Hausdorff spaces and perfect maps. Such a space $E_X$ can also be constructed explicitly as the space of convergent ultrafilters on the open sets of $X$ \cite{strauss-extremallydisconnected}. 

The following two results extend \cite[Lemma 5.2 and Theorem 5.3]{HK-Olb-Re} to non-compact spaces; the proof is essentially the same.
\begin{lemma}\label{lemma:completion-funcontcomp}
Let $X$ be a Hausdorff regular space. Then, the natural embedding $\psi:\funcontcomp(X,\insZ)\longrightarrow\funcontcomp(E_X,\insZ)$, $f\mapsto f\circ j$, makes $\funcontcomp(E_X,\insZ)$ into the completion of $\psi(\funcontcomp(X,\insZ))$ as an $\ell$-group. Moreover, the quotient group $\funcontcomp(E_X,\insZ)/\psi(\funcontcomp(X,\insZ))$ is free.
\end{lemma}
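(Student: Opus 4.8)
The statement to prove (Lemma~\ref{lemma:completion-funcontcomp}) has two parts: first, that $\funcontcomp(E_X,\insZ)$ is the completion of $\psi(\funcontcomp(X,\insZ))$ as an $\ell$-group; and second, that the quotient is free. Since the excerpt tells us the proof ``is essentially the same'' as \cite[Lemma 5.2 and Theorem 5.3]{HK-Olb-Re}, the plan is to reduce everything to the compact case treated there, with the support condition handling the non-compactness. The key structural fact I would lean on is the characterization of completions quoted right before the lemma: an $\ell$-group $H\supseteq G$ is the completion of $G$ iff $H$ is complete and every $0<h\in H$ is sandwiched $0<g_1\le h\le g_2$ by elements $g_1,g_2$ that are ``approximable'' from $G$ (in the sense of \cite[Theorem 2.4]{completion-lgroups}).

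\textbf{Completeness and the embedding.} First I would verify that $\psi$ is an $\ell$-embedding: $j:E_X\to X$ is continuous, so $f\mapsto f\circ j$ preserves continuity, and it preserves compact support because $j$ is perfect (hence $j^{-1}(\supp f)$ is compact, so $\supp(f\circ j)$ is compact). Because $j$ is surjective, $\psi$ is injective and order-preserving in both directions, so it is an $\ell$-monomorphism. The heart of the argument is showing $\funcontcomp(E_X,\insZ)$ is a complete $\ell$-group: here is where extremal disconnectedness of $E_X$ is essential. Given a family bounded below, I would form the pointwise infimum and argue that its ``jumps'' occur along sets whose closures are clopen (the defining property of an extremally disconnected space), so that the pointwise infimum is again continuous; the compact-support bound from the lower bound keeps us inside $\funcontcomp$. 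This is the step I expect to be the main obstacle, since one must carefully check that taking infima of integer-valued continuous functions with compact support does not leak outside the group — the extremal disconnectedness gives continuity, but one must confirm the support of the infimum stays compact, using that it is dominated by the support of a fixed lower bound.

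\textbf{The density/sandwiching condition.} To finish the completion claim, I would take $0<h\in\funcontcomp(E_X,\insZ)$ and produce $g_1,g_2\in\psi(\funcontcomp(X,\insZ))$ with $0<g_1\le h\le g_2$. The function $h$ is bounded (Lemma~\ref{lemma:cont->bound}) with compact support, and its level sets are clopen in $E_X$; the perfect map $j$ pushes the relevant clopen pieces down to $X$ in a way that lets me build continuous compactly-supported functions on $X$ bounding $h$ above and below, after composing back with $j$. This mirrors the compact-space argument of \cite{HK-Olb-Re} verbatim except for carrying the compact-support bookkeeping through each step.

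\textbf{Freeness of the quotient.} For the final assertion I would invoke the Specker-group machinery from the preliminaries. Both $\psi(\funcontcomp(X,\insZ))$ and $\funcontcomp(E_X,\insZ)$ are subgroups of $\insbound(E_X,\insZ)$ (bounded by Lemma~\ref{lemma:cont->bound}), hence free by \cite[Satz 1]{nobeling}. The plan is to check that both groups are \emph{Specker} subgroups in the sense defined in the preliminaries: a continuous compactly-supported $f$ has clopen level sets, so each $\chi_{f^{-1}(b)}$ is again continuous with compact support and lies in the group, and the same holds for $\psi(\funcontcomp(X,\insZ))$ since characteristic functions of the relevant clopen sets pull back along $j$. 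Having verified the Specker property for the inclusion $\psi(\funcontcomp(X,\insZ))\subseteq\funcontcomp(E_X,\insZ)$, \cite[Satz 2]{nobeling} immediately gives a splitting $\funcontcomp(E_X,\insZ)=\psi(\funcontcomp(X,\insZ))\oplus S$ with $S$ free, so the quotient is isomorphic to $S$ and hence free. The only subtlety here is confirming the Specker closure condition survives the compact-support restriction, which again follows because the level sets involved are clopen subsets of a compact support and therefore compact.
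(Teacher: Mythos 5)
Your treatment of $\psi$ (via perfectness of $j$), your sandwiching step (deferring to \cite[Lemma 5.2]{HK-Olb-Re} on the compact clopen support), and your freeness argument (Specker groups plus N\"obeling's Satz 2 giving a splitting) all coincide with the paper's proof. The genuine gap is in the completeness step --- precisely the one you flagged as the main obstacle. You propose to take the \emph{pointwise} infimum of a family bounded below and argue it is continuous by extremal disconnectedness; that claim is false. Take $X=E_X=\beta\insN$ (which is extremally disconnected, hence equal to its own Gleason cover, so this is exactly the setting of the lemma), and let $f_n:=\chi_{\overline{\{k\in\insN\mid k\geq n\}}}$, where the closure is taken in $\beta\insN$ and is clopen, so each $f_n$ is continuous (and compactly supported, as $\beta\insN$ is compact). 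The family $\{f_n\}$ is bounded below by $0$, but its pointwise infimum is $\chi_{\beta\insN\setminus\insN}$, which is \emph{not} continuous because $\insN$ is dense and open but not closed; the infimum in the $\ell$-group $\inscont(\beta\insN,\insZ)$ is instead the zero function. In general the lattice infimum is a regularization of the pointwise infimum (built, e.g., from the clopen closures of the open sets $\bigcup_\alpha\{x\mid f_\alpha(x)\leq n\}$), and it can disagree with the pointwise infimum on a nowhere dense set; conflating the two invalidates the continuity argument as written, even though the conclusion (completeness) is true.

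The paper sidesteps this issue entirely: it quotes the completeness of $\inscont(E_X,\insZ)$ as a known result \cite[Proposition 3.29]{mcgovern-rigid}, and the only new work is the compact-support bookkeeping --- if $g\leq f_\alpha$ for all $\alpha$ and $\overline{f}$ is the infimum computed in $\inscont(E_X,\insZ)$, then the cozero set of $\overline{f}$ is contained in the union of the cozero sets of $g$ and of any single $f_\alpha$, so $\supp\overline{f}$ is compact and $\overline{f}\in\funcontcomp(E_X,\insZ)$. (Note that this bound requires the support of a \emph{member} of the family together with that of the lower bound; your phrasing ``dominated by the support of a fixed lower bound'' has this backwards.) Your proof becomes correct if you either cite that completeness result, as the paper does, or carry out the regularization argument properly in place of the pointwise infimum. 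One further minor point, affecting the paper's write-up as well: for $b=0$ the function $\chi_{f^{-1}(b)}$ has support containing the closure of the zero set of $f$, which is not compact when $E_X$ is non-compact, so the Specker-property verification must be restricted to nonzero values $b$.
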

\begin{proof}
We first show that $\psi$ is well-defined. Clearly $f\circ j$ is continuous; since $j$ is compact, by \cite[Lemma 4]{strauss-extremallydisconnected} $j^{-1}(Z)$ is compact for every compact subspace $Z$ of $X$; hence, $\supp(j\circ f)\subseteq j^{-1}(\supp(f))$ is the closed subset of a compact space, and thus it is compact. Hence $f\circ j$ has compact support and $\psi$ is well-defined.

We now claim that $\funcontcomp(E_X,\insZ)$ and $\psi(\funcontcomp(X,\insZ))$ are Specker groups of $\insfunct(E_X,\insZ)$. For every $f\in\funcontcomp(E_X,\insZ)$ and every $n\inN\setminus\{0\}$, the set $f^{-1}(n)\subseteq\supp(f)$ is a clopen subset of $E_X$, and thus its characteristic function is continuous and has compact support. Hence $\chi_{f^{-1}(n)}\in\funcontcomp(E_X,\insZ)$; by definition, $\funcontcomp(E_X,\insZ)$ is Specker.

Likewise, if $g\in\funcontcomp(X,\insZ)$, then $\chi_{g^{-1}(n)}\in\funcontcomp(X,\insZ)$. Since $\psi(g)^{-1}(n)=j^{-1}(g^{-1}(n))$, we have $\chi_{\psi(g)^{-1}(n)}=\psi(\chi_{g^{-1}(n)})\in\psi(\funcontcomp(X,\insZ))$, and thus also $\psi(\funcontcomp(X,\insZ))$ is Specker. 

By \cite[Satz 2]{nobeling} there is a free subgroup $S$ such that $\funcontcomp(E_X,\insZ)=\psi(\funcontcomp(X,\insZ))\oplus S$. Hence, the quotient $\funcontcomp(E_X,\insZ)/\psi(\funcontcomp(X,\insZ))$ is isomorphic to $S$ and thus free.

\medskip

It only remains to prove that $\funcontcomp(E_X,\insZ)$ is the completion of $\psi(\funcontcomp(X,\insZ))$.

By \cite[Proposition 3.29]{mcgovern-rigid}, the space $\mathcal{C}(E_X,\insZ)$ is a complete $\ell$-group; if now $F:=\{f_\alpha\}$ is a subset of $\funcontcomp(E_X,\insZ)$ that is bounded below, say $0<g\leq f_\alpha$ for every $\alpha$, then $F$ is also bounded below in $\mathcal{C}(E_X,\insZ)$, and thus $F$ has an infimum $\overline{f}$ in $\mathcal{C}(E_X,\insZ)$. However, $\supp\overline{g}\subseteq\supp\overline{f_\alpha}$ for every $\alpha$, and thus $\supp\overline{g}$ is a closed set of a compact set, and in particular it is compact. Hence $\overline{g}\in\funcontcomp(E_X,\insZ)$ and $\funcontcomp(E_X,\insZ)$ is complete.

To prove that $\funcontcomp(E_X,\insZ)$ is the completion of $\funcontcomp(X,\insZ)$, we need to prove that if $0<f\in\funcontcomp(X,\insZ)$ then there are $g_1,g_2\in\funcontcomp(E_X,\insZ)$ such that $0<\psi(g_1)\leq f\leq\psi(g_2)$. The support of $f$ is clopen, and thus $j^{-1}(\supp(f))$ is an open set of $E_X$; therefore, $j^{-1}(\supp(f))$ is extremally disconnected and, in fact, $j:j^{-1}(\supp(f))\longrightarrow\supp(f)$ is the minimal perfect mapping onto $\supp(f)$, so that $j^{-1}(\supp(f))\simeq E_{\supp(f)}$. By \cite[Lemma 5.2]{HK-Olb-Re}, we can find $\widetilde{g}_1,\widetilde{g}_2\in\mathcal{C}(j^{-1}(\supp(f)),\insZ)$ with the right properties; since $\widetilde{g}_1,\widetilde{g}_2$ can be extended to continuous functions of compact support $g_1,g_2:E_X\longrightarrow\insZ$, we get $0<\psi(g_1)\leq f\leq\psi(g_2)$, as claimed. The claim is proved.
\end{proof}

\begin{teor}\label{teor:Div}
Let $R$ be an SP-domain. Then, there is a commutative diagram
\begin{equation*}
\begin{CD}
\Inv(R)  @>{\subseteq}>> \Div(R)\\
@VV{\Psi}V    @V{\beta}VV\\
\funcontcomp(\mmax,\insZ) @>{\psi}>> \funcontcomp(E_\mmax,\insZ)
\end{CD}
\end{equation*}
such that the vertical arrows are isomorphisms. In particular, $\Div(R)\simeq\funcontcomp(E_X,\insZ)$ and the quotient $\Div(R)/\Inv(R)$ are free groups.
\end{teor}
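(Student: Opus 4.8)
The plan is to identify both vertical arrows of the square as the passage to $\ell$-group completions and then transport the bottom isomorphism through them. Recall three facts that are already available. First, by Corollary \ref{cor:SP-inv} the map $\Psi\colon\Inv(R)\longrightarrow\funcontcomp(\mmax,\insZ)$ is a group isomorphism; moreover, since $R$ is a one-dimensional Pr\"ufer domain, $\Inv(R)$ carries the $\ell$-group structure described in Section \ref{sect:div} (with $I\wedge J=I+J$ and $I\vee J=I\cap J$), and Proposition \ref{prop:nuI} shows that $\Psi$ carries these operations to the pointwise lattice operations on $\funcontcomp(\mmax,\insZ)$; hence $\Psi$ is in fact an isomorphism of $\ell$-groups. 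Second, again because $R$ is one-dimensional Pr\"ufer, $\Div(R)$ is the completion of $\Inv(R)$ as an $\ell$-group (with the canonical embedding being the inclusion $\Inv(R)\subseteq\Div(R)$ of the top arrow). Third, by Lemma \ref{lemma:completion-funcontcomp} the group $\funcontcomp(E_\mmax,\insZ)$ is the completion of $\psi(\funcontcomp(\mmax,\insZ))$, and the quotient $\funcontcomp(E_\mmax,\insZ)/\psi(\funcontcomp(\mmax,\insZ))$ is free.

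With these in hand, I would argue as follows. Since $\psi$ is an $\ell$-embedding (precomposition with the surjection $j$ preserves the pointwise order and its infima and suprema), the composite $\psi\circ\Psi$ is an isomorphism of $\ell$-groups from $\Inv(R)$ onto $\psi(\funcontcomp(\mmax,\insZ))$. Now $\Div(R)$ is the completion of the source and $\funcontcomp(E_\mmax,\insZ)$ is the completion of the target; by the uniqueness of the completion an isomorphism of $\ell$-groups extends uniquely to an isomorphism of their completions. I would define $\beta\colon\Div(R)\longrightarrow\funcontcomp(E_\mmax,\insZ)$ to be this unique extension of $\psi\circ\Psi$. By construction $\beta$ restricts to $\psi\circ\Psi$ on $\Inv(R)$, which is precisely the statement that the square commutes, and $\beta$ is an isomorphism because it is an isomorphism of completions.

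The ``in particular'' claims then follow quickly. The isomorphism $\beta$ gives $\Div(R)\simeq\funcontcomp(E_\mmax,\insZ)$; the latter consists of continuous functions of compact support, which are bounded by Lemma \ref{lemma:cont->bound}, so it is a subgroup of the free group $\insbound(E_\mmax,\insZ)$ (\cite[Satz 1]{nobeling}) and is therefore free. For the quotient, the compatible pair $\beta$ and $\psi\circ\Psi$ identifies $\Div(R)/\Inv(R)$ with $\funcontcomp(E_\mmax,\insZ)/\psi(\funcontcomp(\mmax,\insZ))$, which is free by Lemma \ref{lemma:completion-funcontcomp}.

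The main obstacle I expect is the second paragraph: making rigorous that an $\ell$-isomorphism of $\ell$-groups lifts uniquely to an $\ell$-isomorphism of their completions. This is the functoriality of the $\ell$-completion along isomorphisms, and the cleanest route is through the intrinsic characterization of the completion in \cite[Theorem 2.4]{completion-lgroups}: one checks that, transported along $\psi\circ\Psi$, the complete $\ell$-group $\funcontcomp(E_\mmax,\insZ)$ together with the image of $\Inv(R)$ satisfies the defining conditions of that theorem, so that it is \emph{a} completion of $\Inv(R)$; uniqueness of the completion then yields the required isomorphism $\beta$ with $\Div(R)$. A secondary point worth verifying carefully is that $\Psi$ genuinely respects $\wedge$ and $\vee$ and not merely the order, so that it is an $\ell$-isomorphism rather than just an order isomorphism; this is where the explicit description of the lattice operations together with Proposition \ref{prop:nuI} is used.
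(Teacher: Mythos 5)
Your proposal is correct and follows essentially the same route as the paper: both identify $\Div(R)$ as the $\ell$-group completion of $\Inv(R)$ (via \cite[Proposition 3.1]{HK-Olb-Re}) and $\funcontcomp(E_\mmax,\insZ)$ as the completion of $\psi(\funcontcomp(\mmax,\insZ))$ (Lemma \ref{lemma:completion-funcontcomp}), then obtain $\beta$ by transporting the $\ell$-isomorphism $\psi\circ\Psi$ to the completions and quote the same freeness facts for the ``in particular'' claims. The paper's proof is simply terser, leaving implicit the uniqueness-of-completion argument and the verification that $\Psi$ respects the lattice operations, both of which you spell out.
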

\begin{proof}
By \cite[Proposition 3.1]{HK-Olb-Re}, $\Div(R)$ is the completion (as an $\ell$-group) of $\Inv(R)$; on the other hand, by Lemma \ref{lemma:completion-funcontcomp}, $\funcontcomp(E_\mmax,\insZ)$ is the completion of $\psi(\funcontcomp(\mmax,\insZ))$. Therefore, $\beta$ must be an isomorphism.

In particular, $\Div(R)$ is isomorphic to $\funcontcomp(E_\mmax,\insZ)$, and thus free by \cite[Satz 1]{nobeling}. Moreover, $\Div(R)/\Inv(R)$ is isomorphic to the quotient $\funcontcomp(E_\mmax,\insZ)/\psi(\funcontcomp(\mmax,\insZ))$; by Lemma \ref{lemma:completion-funcontcomp}, this group is free too.
\end{proof}

It is temping to apply the same methods used in the previous theorem for the case of SP-scattered domains. However, while the map $\Inv(R)\longrightarrow\Inv(T_\alpha)$ is a $\ell$-homomorphism for every $\alpha$, the decomposition $\Inv(R)\simeq\bigoplus_{i<\alpha}\funcontcomp(X_i,\insZ)$ of Theorem \ref{teor:SP-scattered} (where $X_i:=\inscrit_i(R)\setminus\inscrit_{i+1}(R)$) is \emph{not} a decomposition of $\ell$-groups, in the sense that the isomorphism map is not an $\ell$-homomorphism, since the order on $\Inv(R)$ does not correspond to the componentwise order of the direct sum.

For example, suppose that $\mmax=\{M_0,M_1,\ldots\}\cup\{M_\infty\}=X\cup\{M_\infty\}$, where $M_i$ is isolated for $i\inN$, while $M_\infty$ is critical (and thus a non-isolated point of $\mmax$). By Theorem \ref{teor:SP-scattered}, we have
\begin{equation*}
\Inv(R)\simeq\funcontcomp(X,\insZ)\oplus\funcontcomp(\{M_\infty\},\insZ);
\end{equation*}
in particular, the first summand is isomorphic to the direct sum of countably many copies of $\insZ$, while the second summand is isomorphic to $\insZ$.

The generator of the second summand in $\Inv(R)$ is an invertible ideal $I$ that is contained in $M_\infty$ and is not continuous; in particular, there must be a maximal ideal $M_i$ containing $I$, so that $I\geq M_i$ in $\Inv(R)$. However, in the direct sum representation $I$ corresponds to $(\mathbf{0},n)$, where $\mathbf{0}$ is the zero function on $X$ and $n>0$, while $P$ correspond to $(\chi_{\{P\}},0)$: in particular, $(\mathbf{0},1)$ and $(\chi_{\{P\}},0)$ are not comparable in the componentwise order of $\funcontcomp(X,\insZ)\oplus\funcontcomp(\{M_\infty\},\insZ)$.

Note also that, in this case, both $\funcontcomp(X,\insZ)$ and $\funcontcomp(\{M_\infty\},\insZ)$ are complete $\ell$-groups, and thus by \cite[Corollary to Theorem 2.4]{completion-lgroups} so is their direct sum (with the componentwise order); however, if $J$ is a radical ideal contained in $M_\infty$, then $J$ is divisorial (since it is the intersection of the $M_i$ containing it) but not invertible (otherwise $M_\infty$ would not be critical). Hence, we don't have a natural decomposition $\Div(R)\simeq\funcontcomp(E_X,\insZ)\oplus\funcontcomp(E_{\{M_\infty\}},\insZ)$, and thus the analogue of Theorem \ref{teor:SP-scattered}, i.e., the decomposition
\begin{equation*}
\Div(R)\simeq\bigoplus_{i<\alpha}\funcontcomp(E_{X_i},\insZ)
\end{equation*}
does \emph{not} hold, in general.

\section{Length functions}\label{sect:length}
Let $R$ be an integral domain and let $\Mod(R)$ be the set of $R$-modules. A \emph{singular length function} on $R$ is a map $\ell:\Mod(R)\longrightarrow\{0,\infty\}$ such that:
\begin{itemize}
\item $\ell(0)=0$;
\item for every short exact sequence $0\longrightarrow M_1\longrightarrow M_2\longrightarrow M_3\longrightarrow 0$, we have $\ell(M_2)=\ell(M_1)+\ell(M_3)$;
\item for every module $M$, we have $\ell(M)=\sup\{\ell(N)\mid N$ is a finitely generated submodule of $M\}$.
\end{itemize}
Every length function $\ell$ is uniquely determined by the map $\tau$ associating to each ideal $I$ of $R$ the length $\ell(R/I)$ \cite[Proposition 3.3]{zanardo_length}. We call $\tau$ the \emph{ideal colength associated to $\ell$}. Given two ideals $I\subseteq J$, we have $\tau(I)\geq\tau(J)$. See \cite{northcott_length} and \cite{vamos-additive} for an introduction to length functions.

If $\ell$ is a length function on $R$ and $T$ is a flat overring of $R$, we define $\ell\otimes T$ as the length function on $R$ such that $(\ell\otimes T)(M)=\ell(M\otimes T)$ for all modules $M$ \cite[Section 3]{length-funct}; likewise, if $\tau$ is the corresponding ideal colength, we set $(\tau\otimes T)(I):=(\ell\otimes T)(R/I)=\ell(T/IT)$, i.e., $\tau\otimes T$ is the colength corresponding to $\ell\otimes T$. The functions $\ell\otimes T$ and $\tau\otimes T$ can also be seen as length function on $T$.

Let $\inssubmod(R)$ be the set of $R$-submodules of $K$. A \emph{stable semistar operation} on $R$ is a closure operation $\star:\inssubmod(R)\longrightarrow\inssubmod(R)$, $I\mapsto I^\star$, such that:
\begin{itemize}
\item $x\cdot I^\star=(xI)^\star$ for every $x\in K$, $I\in\inssubmod(R)$;
\item $(I\cap J)^\star=I^\star\cap J^\star$ for every $I,J\in\inssubmod(R)$.
\end{itemize}

There is a natural bijective correspondence between singular length functions and stable semistar operations; a length function $\ell$ with ideal colength $\tau$ and a semistar operation $\star$ are associated when $\tau(I)=0$ if and only if $1\in I^\star$ \cite[Section 6]{length-funct}.

Let $R$ be an almost Dedekind domain. If $\mmax$ is scattered, then every stable semistar operation $\star$ on $R$ is in the form $I\mapsto\bigcap\{IR_P\mid P\in\Delta\}$ for some $\Delta\subseteq\Max(R)$ \cite[Corollaries 8.6 and 8.9]{jaff-derived}; this writing corresponds to a decomposition $\ell=\sum\{\ell\otimes R_P\mid P\in\Delta\}$ (see \cite[Section 3]{length-funct} and \cite[Proposition 7.7]{jaff-derived}). In particular, $1\in I^\star$ if and only if $V(I)\cap\Delta$ is empty; hence, $1\in I^\star$ if and only if $1\in\rad(I)^\star$, and
\begin{equation*}
\tau(I)=\tau(\rad(I))
\end{equation*}
for every ideal $I$. In this section, we want to prove that this equality holds, more generally, for all SP-scattered domains.

\begin{lemma}\label{lemma:sum}
Let $\ell$ be a singular length function on $R$, and let $\tau$ be the corresponding ideal colength. Then, for every $I,J$, we have
\begin{equation*}
\tau(IJ)=\tau(I)+\tau(J).
\end{equation*}
\end{lemma}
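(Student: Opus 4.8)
The plan is to exploit the bijective correspondence between singular length functions and stable semistar operations recalled just above, which is tailor-made for this two-valued situation. Let $\star$ be the stable semistar operation associated to $\ell$, so that $\tau(I)=0$ precisely when $1\in I^\star$. Since $\ell$ takes values in $\{0,\infty\}$, so does $\tau$, and the desired identity (in which $0+0=0$ and every other sum equals $\infty$) is therefore equivalent to the single biconditional
\begin{equation*}
\tau(IJ)=0 \iff \tau(I)=0 \text{ and } \tau(J)=0,
\end{equation*}
that is, $1\in(IJ)^\star$ if and only if $1\in I^\star$ and $1\in J^\star$. So the whole proof reduces to establishing this equivalence.

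First I would dispatch the easy implication. Since $IJ\subseteq I$ and $IJ\subseteq J$, monotonicity of the closure operation $\star$ gives $(IJ)^\star\subseteq I^\star$ and $(IJ)^\star\subseteq J^\star$; hence $1\in(IJ)^\star$ forces $1\in I^\star$ and $1\in J^\star$. The reverse implication is where the stability axiom $x\cdot I^\star=(xI)^\star$ becomes crucial. Assuming $1\in I^\star$ and $1\in J^\star$, for each $x\in J$ I would write $x=x\cdot 1\in x\cdot I^\star=(xI)^\star$; since $x\in J$ gives $xI\subseteq IJ$, monotonicity yields $(xI)^\star\subseteq(IJ)^\star$, so $x\in(IJ)^\star$. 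As $x\in J$ was arbitrary, $J\subseteq(IJ)^\star$, and applying $\star$ together with idempotence gives $J^\star\subseteq(IJ)^\star$. From $1\in J^\star$ we then conclude $1\in(IJ)^\star$, which completes the equivalence and hence the lemma.

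I expect the only point requiring genuine care to be this reverse implication: one must feed the hypothesis $1\in I^\star$ through the stability axiom one element $x\in J$ at a time, since there is no closure formula for the product $IJ$ directly available without stability. Everything else is formal manipulation with the closure axioms (extensivity, monotonicity, idempotence), and the argument applies verbatim to all ideals, including the zero ideal, for which the relevant hypothesis $1\in J^\star$ is simply false in the non-degenerate case and the claim holds vacuously.
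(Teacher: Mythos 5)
Your proof is correct and follows essentially the same route as the paper: singularity reduces the identity to the biconditional $\tau(IJ)=0\iff\tau(I)=\tau(J)=0$, the easy direction is containment plus monotonicity, and the hard direction goes through the associated stable semistar operation and the characterization $\tau(I)=0\iff 1\in I^\star$. The only difference is one of packaging: where the paper notes $I^\star=R^\star=J^\star$ and invokes the standard identity $(IJ)^\star=(I^\star J^\star)^\star$ to get $(IJ)^\star=R^\star$, you re-derive the needed inclusion element-by-element from the axiom $x\cdot I^\star=(xI)^\star$, which makes your version slightly more self-contained but is not a different argument.
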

\begin{proof}
If $\tau(IJ)=0$, then $\tau(I)=0=\tau(J)$ since $IJ$ is contained in both $I$ and $J$. Conversely, if $\tau(I)=\tau(J)=0$, let $\star$ be the stable semistar operation associated to $\ell$: then, $I^\star=R^\star=J^\star$, and thus
\begin{equation*}
(IJ)^\star=(I^\star J^\star)^\star=(R^\star R^\star)^\star=R^\star;
\end{equation*}
hence $\tau(IJ)=0$. Thus $\tau(IJ)=0$ if and only if $\tau(I)=\tau(J)=0$, and since $\ell$ is singular it follows that $\tau(IJ)=\infty$ if and only if at least one of $\tau(I)$ and $\tau(J)$ is equal to $\infty$. The claim follows.
\end{proof}

\begin{lemma}\label{lemma:potpan}
Let $\ell$ be a singular length function on $R$, and let $\tau$ be the corresponding ideal colength. Let $I,J$ be two ideals. If $I^n\subseteq J\subseteq I$ for some integer $n$, then $\tau(I)=\tau(J)$.
\end{lemma}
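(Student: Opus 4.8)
The plan is to sandwich $\tau(J)$ between $\tau(I)$ and $\tau(I^n)$ using monotonicity, and then to observe that additivity (Lemma \ref{lemma:sum}) together with the singularity of $\ell$ forces $\tau(I^n)=\tau(I)$, so that the two outer terms coincide.

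First I would translate the two inclusions into inequalities via the monotonicity of $\tau$ recorded in the preliminaries (namely, $I\subseteq J$ implies $\tau(I)\geq\tau(J)$). From $J\subseteq I$ we obtain $\tau(J)\geq\tau(I)$, and from $I^n\subseteq J$ we obtain $\tau(I^n)\geq\tau(J)$. Chaining these gives
\begin{equation*}
\tau(I)\leq\tau(J)\leq\tau(I^n).
\end{equation*}

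Next I would evaluate the rightmost term. Iterating Lemma \ref{lemma:sum} on the product $I^n=I\cdots I$ yields $\tau(I^n)=n\,\tau(I)$. Because $\ell$ is a \emph{singular} length function, $\tau(I)=\ell(R/I)\in\{0,\infty\}$; hence $n\,\tau(I)=\tau(I)$ in either case ($n\cdot 0=0$ and $n\cdot\infty=\infty$), so $\tau(I^n)=\tau(I)$. Substituting this into the chain above collapses it to $\tau(I)\leq\tau(J)\leq\tau(I)$, which forces $\tau(I)=\tau(J)$, as desired.

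There is no serious obstacle here: the only point worth flagging is that the squeeze closes precisely because the $\{0,\infty\}$-valued nature of a singular length function makes additivity degenerate into $\tau(I^n)=\tau(I)$. I would therefore state Lemma \ref{lemma:sum} and the singularity of $\ell$ explicitly at the step where $\tau(I^n)$ is computed, since these are exactly the hypotheses doing the work.
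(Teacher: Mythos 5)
Your proof is correct and is essentially identical to the paper's: both squeeze $\tau(J)$ between $\tau(I)$ and $\tau(I^n)$ via monotonicity, then use Lemma \ref{lemma:sum} and the $\{0,\infty\}$-valuedness of a singular length function to conclude $\tau(I^n)=n\tau(I)=\tau(I)$. No changes are needed.
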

\begin{proof}
Since $I^n\subseteq J\subseteq I$, we have $\tau(I^n)\geq\tau(J)\geq\tau(I)$. By Lemma \ref{lemma:sum}, $\tau(I^n)=n\tau(I)$; since $\tau(I)$ is either $0$ or $\infty$, we have $n\tau(I)=\tau(I)$. Thus $\tau(I)=\tau(J)$, as claimed.
\end{proof}

\begin{lemma}\label{lemma:VIOMega}
Let $R$ be a one-dimensional Pr\"ufer domain, let $\Omega\subseteq\mmax$ be a closed set, let $T:=\bigcap\{R_P\mid P\in\Omega\}$ and let $I$ be an ideal such that $V(I)\subseteq\Omega$. Let $\ell$ be a singular length function on $R$, and let $\tau$ be the corresponding ideal colength. Then, $\tau(I)=(\tau\otimes T)(I)$.
\end{lemma}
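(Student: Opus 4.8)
The plan is to prove the stronger statement that the natural $R$-module homomorphism $\phi\colon R/I\longrightarrow T/IT$ induced by the inclusion $R\hookrightarrow T$ is an \emph{isomorphism}. The lemma then follows immediately: by definition $\tau(I)=\ell(R/I)$ and $(\tau\otimes T)(I)=\ell(T/IT)$, so an isomorphism $R/I\simeq T/IT$ of $R$-modules forces $\tau(I)=(\tau\otimes T)(I)$. Since a homomorphism of $R$-modules is an isomorphism precisely when it becomes one after localizing at every maximal ideal, I would reduce to checking that $\phi_M\colon (R/I)_M\longrightarrow (T/IT)_M$ is an isomorphism for each $M\in\Max(R)$, using exactness of localization to write $(T/IT)_M=T_M/IT_M$. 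The hypothesis $V(I)\subseteq\Omega$ enters through a case split on whether or not $M$ belongs to $\Omega$.

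If $M\in\Omega$, then $R_M$ is one of the rings appearing in the intersection defining $T$, so $R\subseteq T\subseteq R_M$; localizing this chain at $M$ gives $R_M\subseteq T_M\subseteq R_M$, and hence $T_M=R_M$. Consequently $(IT)_M=IT_M=IR_M$, and the localized map $\phi_M$ is the identity of $R_M/IR_M$, in particular an isomorphism.

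If instead $M\notin\Omega$, then $M\notin V(I)$ because $V(I)\subseteq\Omega$; thus $I\nsubseteq M$, and we may choose $a\in I\setminus M$. Such an $a$ lies in the multiplicative set $R\setminus M$, which is inverted in both $R_M$ and $T_M$, so $a$ is a unit of each; since $a\in I$, this yields $IR_M=R_M$ and $IT_M=T_M$. Therefore $(R/I)_M=R_M/IR_M=0$ and $(T/IT)_M=T_M/IT_M=0$, so $\phi_M$ is the zero map between trivial modules, again an isomorphism.

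Combining the two cases, $\phi$ is a local isomorphism at every maximal ideal of $R$, hence an isomorphism, and applying $\ell$ finishes the proof. The only genuine work is the local computation of $T_M$ in the two cases---identifying $T_M=R_M$ when $M\in\Omega$, and showing that $I$ already generates the unit ideal of $T_M$ when $M\notin\Omega$---and this is exactly where the descriptions $T=\bigcap\{R_P\mid P\in\Omega\}$ and $V(I)\subseteq\Omega$ are used. I expect this local identification of $T_M$ to be the main (though mild) obstacle; once these two local identities are in hand, nothing further is needed, the Pr\"ufer hypothesis serving only to guarantee that $T$ is a flat overring so that $\ell\otimes T$ is defined.
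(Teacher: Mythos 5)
Your proof is correct, but it takes a genuinely different route from the paper's. The paper never constructs a module isomorphism: it takes the stable semistar operation $\star$ associated to $\ell$, observes that $V(I)\subseteq\Omega$ forces $I=IT\cap R$, applies stability to get $I^\star=(IT)^\star\cap R^\star$, hence $1\in I^\star$ if and only if $1\in(IT)^\star$, and concludes because for a \emph{singular} length function both $\tau(I)$ and $(\tau\otimes T)(I)$ take only the values $0$ and $\infty$ and are detected exactly by these membership conditions. Your argument instead proves the stronger assertion that the natural map $R/I\longrightarrow T/IT$ is an isomorphism of $R$-modules, via the local computations $T_M=R_M$ for $M\in\Omega$ (from $R\subseteq T\subseteq R_M$) and $(R/I)_M=0=(T/IT)_M$ for $M\notin\Omega$ (from $I\nsubseteq M$); these computations are sound, and the passage from a module isomorphism to equality of lengths is immediate since any length function is an isomorphism invariant. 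What your route buys: it makes no use of the singularity of $\ell$, so the conclusion holds for arbitrary length functions, and it bypasses the semistar dictionary entirely; note that the kernel computation $IT\cap R=I$ implicit in your injectivity claim is precisely the identity the paper starts from, so the two proofs share their one substantive ingredient, but you extract more from it. What the paper's route buys: brevity inside a framework (the correspondence between singular length functions and stable semistar operations) that is already set up and is reused in the proof of Theorem \ref{teor:length}. As you point out, the Pr\"ufer hypothesis enters only to make $T$ a flat overring so that $\ell\otimes T$ is defined; neither proof actually needs $\Omega$ to be closed.
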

\begin{proof}
Since $V(I)\subseteq\Omega$, we have $I=IT\cap R$. Let $\star$ be the spectral semistar operation associated to $\ell$: then, $I^\star=(IT)^\star\cap R^\star$, and thus $1\in I^\star$ if and only if $1\in(IT)^\star$. Therefore, $\tau(I)=0$ if and only if $(\tau\otimes T)(I)=0$. The claim follows.
\end{proof}

\begin{teor}\label{teor:length}
Let $R$ be an almost Dedekind domain, and let $\alpha$ be an ordinal. Let $\ell$ be a singular length function on $R$, and let $\tau$ be the corresponding ideal colength. Then, for every ideal $I$ we have
\begin{equation}\label{eq:sommatau}
\tau(I)=\tau(\rad(I))+(\tau\otimes T_\alpha)(I).
\end{equation}
\end{teor}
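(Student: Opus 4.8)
The plan is a transfinite induction on $\alpha$, proving \eqref{eq:sommatau} for all ideals $I$ at stage $\alpha$ from its validity at all stages $\beta<\alpha$. Since all values lie in $\{0,\infty\}$ and $\tau$ reverses inclusions, if $\tau(\rad(I))=\infty$ then $\tau(I)\geq\tau(\rad(I))=\infty$ and both sides of \eqref{eq:sommatau} equal $\infty$; so I may assume $\tau(\rad(I))=0$ and must prove $\tau(I)=(\tau\otimes T_\alpha)(I)$. The central device is the ideal $I_\alpha:=IT_\alpha\cap R$. Since $\inscrit_\alpha(R)$ is closed (Lemma \ref{lemma:incritalpha}) and $V(I_\alpha)\subseteq\inscrit_\alpha(R)$, Lemma \ref{lemma:VIOMega} gives $\tau(I_\alpha)=(\tau\otimes T_\alpha)(I_\alpha)=(\tau\otimes T_\alpha)(I)$, the last equality because $I_\alpha T_\alpha=IT_\alpha$. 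The short exact sequence $0\to I_\alpha/I\to R/I\to R/I_\alpha\to 0$ together with additivity of $\ell$ then yields $\tau(I)=\ell(I_\alpha/I)+\tau(I_\alpha)$, so it is enough to prove $\ell(I_\alpha/I)=0$.

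Next I would reduce $\ell(I_\alpha/I)=0$ to a statement about ideal quotients. As $I_\alpha/I$ is the directed union of its finitely generated submodules, and each such submodule $\big(I+(x_1,\dots,x_r)\big)/I$ carries a filtration whose successive quotients are cyclic of the form $R/\mathfrak a$ with $\mathfrak a\supseteq(I:x_j)$ and $x_j\in I_\alpha$, the order-reversal of $\tau$ shows $\ell(I_\alpha/I)=0$ once I verify $\tau((I:x))=0$ for every $x\in I_\alpha$. For a single element the quotient localizes, $\nu_{(I:x)}(P)=\max\{0,\nu_I(P)-v_P(x)\}$; since $x\in I_\alpha$ forces $v_P(x)\geq\nu_I(P)$ at each $P\in\inscrit_\alpha(R)$, the ideal $J:=(I:x)$ satisfies $V(J)\cap\inscrit_\alpha(R)=\emptyset$, while $\rad(J)\supseteq\rad(I)$ forces $\tau(\rad(J))=0$. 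Thus the whole theorem reduces to the implication $(\dagger_\alpha)$: if $V(J)\cap\inscrit_\alpha(R)=\emptyset$ and $\tau(\rad(J))=0$, then $\tau(J)=0$.

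The heart of the argument is $(\dagger_1)$, which I would isolate as a lemma: if $V(J)\cap\inscrit(R)=\emptyset$ then $\nu_J$ is bounded, so by Proposition \ref{prop:nuI}(f) we have $\rad(J)^k\subseteq J$ and Lemma \ref{lemma:potpan} gives $\tau(J)=\tau(\rad(J))$. For the boundedness, cover the compact set $V(J)$ by clopen sets disjoint from the closed set $\inscrit(R)$; a finite subcover produces an invertible radical ideal $L$ with $V(J)\subseteq V(L)$ and $V(L)\cap\inscrit(R)=\emptyset$. Then $S:=\bigcap\{R_P\mid P\in V(L)\}$ is again an almost Dedekind domain whose maximal ideals are the extensions of the points of $V(L)$ (Lemma \ref{lemma:closed-intersect}); each of these contains the (invertible, radical) extension of an invertible radical ideal witnessing non-criticality in $R$, so $\inscrit(S)=\emptyset$ and $S$ is an SP-domain (Theorem \ref{teor:caratt-SP}). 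In an SP-domain every proper ideal is a \emph{finite} product of radical ideals, so $\nu_{JS}$ is bounded by the number of factors; as $\nu_J=\nu_{JS}$ on $V(L)\supseteq\supp\nu_J$, the function $\nu_J$ is bounded.

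For $(\dagger_\alpha)$ with $\alpha>1$ I would feed back the inductive hypothesis. If $\alpha=\gamma+1$, pass to $A:=T_\gamma$, where $V(JA)\cap\inscrit(A)=\emptyset$ by the identification of $\inscrit(T_\gamma)$ with $\inscrit_{\gamma+1}(R)$ (Lemma \ref{lemma:incritalpha}); the lemma of the previous paragraph, applied in $A$, gives $\tau_A(JA)=\tau_A(\rad_A(JA))$, which vanishes because the inductive hypothesis applied to $\rad(J)$ forces $(\tau\otimes T_\gamma)(\rad(J))=0$ and $\rad_A(JA)\supseteq\rad(J)A$. Hence $(\tau\otimes T_\gamma)(J)=\tau_A(JA)=0$, and the inductive hypothesis for $\gamma$ applied to $J$ yields $\tau(J)=\tau(\rad(J))+(\tau\otimes T_\gamma)(J)=0$. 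If $\alpha$ is a limit, compactness of $V(J)$ against the decreasing family of closed sets $\{\inscrit_\beta(R)\}_{\beta<\alpha}$ (whose intersection is $\inscrit_\alpha(R)$) gives $V(J)\cap\inscrit_{\bar\beta}(R)=\emptyset$ for a single $\bar\beta<\alpha$, and the inductive hypothesis at $\bar\beta$ finishes the proof since $JT_{\bar\beta}\cap R=R$. I expect the main obstacle to be precisely the lemma of the third paragraph — recognizing that localizing at a clopen neighborhood disjoint from $\inscrit(R)$ produces an SP-domain, so that finite radical factorization there supplies the boundedness of $\nu_J$ that Lemma \ref{lemma:potpan} demands; the passage to $(I:x)$, which upgrades the mere fact that $\supp\nu_I$ avoids $\inscrit_\alpha(R)$ on the critical locus into exact disjointness $V((I:x))\cap\inscrit_\alpha(R)=\emptyset$, is the other essential device.
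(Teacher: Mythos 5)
Your proof is correct, but it takes a genuinely different route from the paper's. The paper does no induction on $\alpha$: it splits on the value of $\tau(I)$, and in the hard case $\tau(I)=\infty$ it passes to $J:=I^\star\cap R$ (via the stable semistar operation associated to $\ell$), takes the minimal $\beta$ with $V(J)\nsubseteq\inscrit_\beta(R)$ (which cannot be a limit ordinal), reduces without loss of generality to $\beta=1$ using Lemma \ref{lemma:VIOMega}, and then uses a Jaffard-family splitting $\{S,S^\perp\}$ attached to a clopen set $\Omega$ disjoint from $\inscrit(R)$ to get $(\tau\otimes S)(JS)=\infty$, hence $\tau(\rad(I))=\infty$, so that both sides of \eqref{eq:sommatau} are $\infty$. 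You instead split on $\tau(\rad(I))$, run a transfinite induction on $\alpha$, and replace the semistar/Jaffard machinery by module-theoretic bookkeeping: the contraction $I_\alpha=IT_\alpha\cap R$, the exact sequence $0\to I_\alpha/I\to R/I\to R/I_\alpha\to 0$, and the reduction to colon ideals $(I:x)$, whose point is that mere avoidance of $\inscrit_\alpha(R)$ by $x$ becomes exact disjointness $V((I:x))\cap\inscrit_\alpha(R)=\emptyset$. Both arguments pivot on the same core fact --- a clopen set disjoint from $\inscrit(R)$ cuts out an SP-overring, where Lemma \ref{lemma:potpan} applies --- which you establish by bounding $\nu_J$ through finite radical factorization and invoking Proposition \ref{prop:nuI}(f), while the paper applies Lemma \ref{lemma:potpan} directly inside $S$. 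What your route buys is independence from the semistar correspondence and Jaffard families (only the length-function axioms and the topology of $\mmax$ are used), and a statement, your $(\dagger_\alpha)$, of independent interest; the cost is the recursion plus a few standard Pr\"ufer facts you assert without proof and would need to justify in a full write-up, chiefly $V(IT_\alpha\cap R)\subseteq\inscrit_\alpha(R)$, the identity $(IT_\alpha\cap R)T_\alpha=IT_\alpha$, and the formula $\nu_{(I:x)}(P)=\max\{0,\nu_I(P)-v_P(x)\}$ (all true, e.g.\ via flatness of $R_P$ and the fact that $T_\alpha R_P=K$ when $P\notin\inscrit_\alpha(R)$). The paper's device $J=I^\star\cap R$ packages exactly this localization information in one step, which is what lets it treat all $\alpha$ simultaneously without induction.
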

\begin{proof}
Suppose first that $\tau(I)=0$. Then, also $\tau(\rad(I))=0$; furthermore, $\ell\otimes T_\alpha\leq\ell$ (i.e., $(\ell\otimes T_\alpha)(M)\leq\ell(M)$ for every module $M$), and thus $\tau\otimes T_\alpha\leq\tau$. Hence also $(\tau\otimes T_\alpha)(I)=0$, and the claim follows.

Suppose now that $\tau(I)=\infty$, and let $\star$ be the spectral semistar operation associated to $\ell$: then, $1\notin I^\star$, and thus $J:=I^\star\cap R$ is a proper ideal of $R$ such that $J=J^\star\cap R$. In particular, $\tau(J)=\infty$. If $V(J)\subseteq\inscrit_\alpha(R)$, then by Lemma \ref{lemma:VIOMega} we have $\tau(J)=(\tau\otimes T_\alpha)(J)=\infty$. In particular, since $I\subseteq J$, we also have $(\tau\otimes T_\alpha)(I)=\infty$; thus, \eqref{eq:sommatau} holds.

Suppose now that $V(J)\nsubseteq\inscrit_\alpha(R)$, and let $\beta\leq\alpha$ be the minimal ordinal such that $V(J)$ is not contained in $\inscrit_\beta(R)$. Such a $\beta$ cannot be a limit ordinal (since in this case $\inscrit_\beta(R)$ is the intersection of $\inscrit_\gamma(R)$ for $\gamma<\beta$): therefore, $\beta=\gamma+1$ for some $\gamma$.  By construction, $V(J)\subseteq\inscrit_\gamma(R)$.

By Lemma \ref{lemma:VIOMega}, we have $\tau(J)=(\tau\otimes T_\gamma)(J)$, and since $\rad(JT_\gamma)=\rad(J)T_\gamma$, we also have $\tau(\rad(JT_\gamma))=\tau(\rad(J)T_\gamma)$; moreover, $(\tau\otimes T_\gamma)\otimes T_\alpha=\tau\otimes(T_\gamma\otimes T_\alpha)=\tau\otimes T_\alpha$. Since the same holds for $I$, we can substitute $R$ with $T_\gamma$, $I$ with $IT_\gamma$ and $J$ with $JT_\gamma$; equivalently, we can suppose without loss of generality that $\gamma=0$ and $\beta=1$.

Under this condition, there is a non-critical maximal ideal $P$ of $R$ such that $J\subseteq P$. Since $\inscrit(R)$ is a closed subset of $\mmax$, there is a clopen subset $\Omega$ such that $P\in\Omega$ and $\Omega\cap\inscrit(R)=\emptyset$. Let $S:=\bigcap\{R_P\mid P\in\Omega\}$ and $S^\perp:=\bigcap\{R_Q\mid Q\in\mmax\setminus\Omega\}$: then, since $\Omega$ is clopen, $\{S,S^\perp\}$ is a Jaffard family (see \cite[Section 6.3]{fontana_factoring}), so that $\tau(J)=(\tau\otimes S)(JS)+(\tau\otimes S^\perp)(JS^\perp)$ and $J^\star=(JS)^\star\cap(JS^\perp)^\star$ \cite[Theorem 3.10]{length-funct}. As $J=J^\star\cap R\subseteq P$, moreover, $(JS)^\star\subseteq PS\subsetneq S$ and thus $1\notin(JS)^\star$; hence $(\tau\otimes S)(JS)=\infty$.

Since $\Omega$ is disjoint from $\inscrit(R)$, $S$ is an SP-domain; in particular, every ideal $I$ contains a power of its radical, and thus by Lemma \ref{lemma:potpan} $(\tau\otimes S)(JS)=(\tau\otimes S)(\rad(JS))$. Therefore, $(\tau\otimes S)(\rad(JS))=\infty$; it follows that $\tau(\rad(J))=\infty$, and since $I\subseteq J$ also $\tau(\rad(I))=\infty$. Thus $\tau(I)=\tau(\rad(I))=\tau(\rad(I))+(\tau\otimes T_\alpha)(I)$, as claimed.
\end{proof}

\begin{cor}
Let $R$ be an SP-scattered almost Dedekind domain; let $\ell$ be a singular length function and let $\tau$ be the corresponding ideal colength. Then, for every ideal $I$, we have
\begin{equation*}
\tau(I)=\tau(\rad(I)).
\end{equation*}
\end{cor}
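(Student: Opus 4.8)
The plan is to derive this immediately from Theorem \ref{teor:length} by exploiting the defining property of an SP-scattered domain. First I would let $\alpha$ denote the SP-rank of $R$; since $R$ is SP-scattered, by definition $T_\alpha = K$, the quotient field of $R$.

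Next I would specialize the identity of Theorem \ref{teor:length},
\[
\tau(I) = \tau(\rad(I)) + (\tau \otimes T_\alpha)(I),
\]
to this value of $\alpha$. The only point to verify is that the correction term $(\tau \otimes T_\alpha)(I)$ vanishes. By the definition of $\tau \otimes T_\alpha$, we have $(\tau \otimes T_\alpha)(I) = \ell(T_\alpha / IT_\alpha) = \ell(K/IK)$. For a nonzero ideal $I$ we have $IK = K$, since $I$ contains a nonzero element of $R$, which is a unit in $K$; hence $K/IK = 0$ and so $(\tau \otimes T_\alpha)(I) = \ell(0) = 0$. The degenerate case $I = (0)$ needs no argument, since $\rad((0)) = (0)$ in a domain and the claimed equality is then trivial.

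Substituting $(\tau \otimes T_\alpha)(I) = 0$ into the displayed identity yields $\tau(I) = \tau(\rad(I))$, as desired. I do not expect a genuine obstacle here: the entire substance of the result is already contained in Theorem \ref{teor:length}, and the corollary is merely the observation that the residual term $(\tau \otimes T_\alpha)(I)$, which records the length over the overring $T_\alpha$, is forced to be zero precisely because SP-scatteredness collapses $T_\alpha$ all the way down to the field $K$.
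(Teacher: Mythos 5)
Your proposal is correct and follows exactly the paper's argument: both specialize Theorem \ref{teor:length} to the SP-rank $\alpha$, use SP-scatteredness to get $T_\alpha=K$, and note that $(\tau\otimes K)(I)=0$ for nonzero $I$. You merely spell out the vanishing of the correction term (via $IK=K$) and the trivial case $I=(0)$, which the paper leaves implicit.
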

\begin{proof}
Since $R$ is SP-scattered, we have $T_\alpha=K$ for some ordinal $\alpha$. The claim now follows from Theorem \ref{teor:length} and the fact that $(\tau\otimes K)(I)=0$ for every nonzero ideal $I$.
\end{proof}

\bibliographystyle{plain}
\bibliography{/bib/articoli,/bib/libri,/bib/miei}

\begin{thebibliography}{10}

\bibitem{intD}
Paul-Jean Cahen and Jean-Luc Chabert.
\newblock {\em Integer-{V}alued {P}olynomials}, volume~48 of {\em Mathematical
  Surveys and Monographs}.
\newblock American Mathematical Society, Providence, RI, 1997.

\bibitem{completion-lgroups}
Paul Conrad and Donald McAlister.
\newblock The completion of a lattice ordered group.
\newblock {\em J. Austral. Math. Soc.}, 9:182--208, 1969.

\bibitem{spectralspaces-libro}
Max Dickmann, Niels Schwartz, and Marcus Tressl.
\newblock {\em Spectral spaces}, volume~35 of {\em New Mathematical
  Monographs}.
\newblock Cambridge University Press, Cambridge, 2019.

\bibitem{compact-intersections}
Carmelo~A. Finocchiaro and Dario Spirito.
\newblock Topology, intersections and flat modules.
\newblock {\em Proc. Amer. Math. Soc.}, 144(10):4125--4133, 2016.

\bibitem{fontana_factoring}
Marco Fontana, Evan Houston, and Thomas Lucas.
\newblock {\em Factoring {I}deals in {I}ntegral {D}omains}, volume~14 of {\em
  Lecture Notes of the Unione Matematica Italiana}.
\newblock Springer, Heidelberg; UMI, Bologna, 2013.

\bibitem{fuchs-abeliangroups}
L\'{a}szl\'{o} Fuchs.
\newblock {\em Abelian groups}.
\newblock Springer Monographs in Mathematics. Springer, Cham, 2015.

\bibitem{gilmer}
Robert Gilmer.
\newblock {\em Multiplicative {I}deal {T}heory}.
\newblock Marcel Dekker Inc., New York, 1972.
\newblock Pure and Applied Mathematics, No. 12.

\bibitem{gilmer-overrings}
Robert~W. Gilmer, Jr.
\newblock Overrings of {P}r\"{u}fer domains.
\newblock {\em J. Algebra}, 4:331--340, 1966.

\bibitem{HK-Olb-Re}
Olivier~A. Heubo-Kwegna, Bruce Olberding, and Andreas Reinhart.
\newblock Group-theoretic and topological invariants of completely integrally
  closed {P}r\"{u}fer domains.
\newblock {\em J. Pure Appl. Algebra}, 220(12):3927--3947, 2016.

\bibitem{mcgovern-rigid}
Michelle~L. Knox and Warren~Wm. McGovern.
\newblock Rigid extensions of {$l$}-groups of continuous functions.
\newblock {\em Czechoslovak Math. J.}, 58(133)(4):993--1014, 2008.

\bibitem{mazur-sierp-numerabili}
Stefan Mazurkiewicz and Wac\l~aw Sierpi\'{n}ski.
\newblock Contribution `a la topologie des ensembles d\'enombrables.
\newblock {\em Fundam. Math.}, 1:17--27, 1920.

\bibitem{nakano-almded}
Noboru Nakano.
\newblock Idealtheorie in einem speziellen unendlichen algebraischen
  {Z}ahlk\"{o}rper.
\newblock {\em J. Sci. Hiroshima Univ. Ser. A}, 16:425--439, 1953.

\bibitem{nobeling}
G.~N\"{o}beling.
\newblock Verallgemeinerung eines {S}atzes von {H}errn {E}. {S}pecker.
\newblock {\em Invent. Math.}, 6:41--55, 1968.

\bibitem{northcott_length}
Douglas~Geoffrey Northcott and Manfred Reufel.
\newblock A generalization of the concept of length.
\newblock {\em Quart. J. Math. Oxford Ser. (2)}, 16:297--321, 1965.

\bibitem{olberding-factoring-SP}
Bruce Olberding.
\newblock Factorization into radical ideals.
\newblock In {\em Arithmetical properties of commutative rings and monoids},
  volume 241 of {\em Lect. Notes Pure Appl. Math.}, pages 363--377. Chapman \&
  Hall/CRC, Boca Raton, FL, 2005.

\bibitem{specker}
Ernst Specker.
\newblock Additive {G}ruppen von {F}olgen ganzer {Z}ahlen.
\newblock {\em Portugal. Math.}, 9:131--140, 1950.

\bibitem{starloc}
Dario Spirito.
\newblock Jaffard families and localizations of star operations.
\newblock {\em J. Commut. Algebra}, 11(2):265--300, 2019.

\bibitem{length-funct}
Dario Spirito.
\newblock Decomposition and classification of length functions.
\newblock {\em Forum Math.}, 32(5):1109--1129, 2020.

\bibitem{jaff-derived}
Dario Spirito.
\newblock The derived sequence of a pre-{J}affard sequence.
\newblock {\em Mediterr. J. Math.}, to appear.

\bibitem{strauss-extremallydisconnected}
Dona~Papert Strauss.
\newblock Extremally disconnected spaces.
\newblock {\em Proc. Amer. Math. Soc.}, 18:305--309, 1967.

\bibitem{vamos-additive}
Peter V\'amos.
\newblock Additive functions and duality over {N}oetherian rings.
\newblock {\em Quart. J. Math. Oxford Ser. (2)}, 19:43--55, 1968.

\bibitem{vaughan-SP}
N.~H. Vaughan and R.~W. Yeagy.
\newblock Factoring ideals into semiprime ideals.
\newblock {\em Canadian J. Math.}, 30(6):1313--1318, 1978.

\bibitem{zanardo_length}
Paolo Zanardo.
\newblock Multiplicative invariants and length functions over valuation
  domains.
\newblock {\em J. Commut. Algebra}, 3(4):561--587, 2011.

\end{thebibliography}
\end{document}